\documentclass[11pt,a4paper,reqno]{amsart}

\usepackage{a4wide}

\usepackage{amssymb,amsmath,amsthm,mathrsfs}
\usepackage{graphicx}
\usepackage{float}
\usepackage{colortbl}
\usepackage{enumerate}
\usepackage{upref}
\usepackage{stackrel}
\usepackage{enumitem}

\usepackage{dsfont}

\usepackage[bookmarks=false]{hyperref}

\usepackage{accents}
    
\usepackage[colorinlistoftodos]{todonotes}

\usepackage{amsfonts} 
\DeclareMathSymbol{\shortminus}{\mathbin}{AMSa}{"39}

\usepackage{tikz}
\usetikzlibrary{arrows}
\usetikzlibrary{decorations.pathreplacing}

\usepackage{chngcntr}
\counterwithin{figure}{section}

\theoremstyle{plain}
\newtheorem{theorem}{Theorem}[section]
\newtheorem*{theorem*}{Theorem}
\newtheorem{lemma}[theorem]{Lemma}

\newtheorem{proposition}[theorem]{Proposition}
\newtheorem{corollary}[theorem]{Corollary}

\theoremstyle{definition}
\newtheorem{definition}[theorem]{Definition}

\newtheorem{example}[theorem]{Example}

\theoremstyle{remark}
\newtheorem{remark}[theorem]{Remark}

\newcommand{\eeq}{\end{equation}}
\newcommand{\beq}{\begin{equation}}

\newcommand{\vertiii}[1]{{\left\vert\kern-0.25ex
  \left\vert\kern-0.25ex\left\vert #1
  \right\vert\kern-0.25ex\right\vert\kern-0.25ex\right\vert}}

\def\R{\ensuremath{\mathbb R}}
\def\N{\ensuremath{\mathbb N}}

\def\ie{{\em i.e.}, }

\numberwithin{equation}{section}  

\begin{document}

\title[Distributional results for the shortest distance]{Distributional results for the shortest distance between
  trajectories of different dynamics}

\author[R. Aimino]{Romain Aimino}
\address{Romain Aimino\\ Centro de Matem\'{a}tica da Universidade do Porto\\ Rua do
Campo Alegre 687\\ 4169-007 Porto\\ Portugal}
\email{\href{mailto:romain.aimino@fc.up.pt}{romain.aimino@fc.up.pt}}
\urladdr{\url{https://sites.google.com/view/romain-aimino/accueil}}

\author[T. Caby]{Th\'eophile Caby}
\address{Th\'eophile Caby\\ Laboratoire d’Océanographie Physique et Spatiale, Univ Brest CNRS Ifremer IRD, Brest, France \\} \email{\href{mailto:theophile.caby@uni-brest.fr}{theophile.caby@uni-brest.fr}}

\author[J. M. Freitas]{Jorge Milhazes Freitas}
\address{Jorge Milhazes Freitas\\ Centro de Matem\'{a}tica \& Faculdade de Ci\^encias da Universidade do Porto\\ Rua do
Campo Alegre 687\\ 4169-007 Porto\\ Portugal}
\email[Corresponding author]{\href{mailto:jmfreita@fc.up.pt}{jmfreita@fc.up.pt}}
\urladdr{\url{http://www.fc.up.pt/pessoas/jmfreita/}}

\author[D. S. Pinho]{Duarte S\'a Pinho}
\address{Duarte S\'a Pinho\\ Centro de Matem\'{a}tica \& Faculdade de Ci\^encias da Universidade do Porto\\ Rua do
Campo Alegre 687\\ 4169-007 Porto\\ Portugal}
\email{\href{mailto:duarte.sa.pinho02@gmail.com}{duarte.sa.pinho02@gmail.com}}

\thanks{All authors were partially supported by CMUP, member of LASI, which is financed by national funds through FCT – Fundação para a Ciência e a Tecnologia, I.P., under the projects with reference UID/00144/2025 and associated DOI given by \href{https://doi.org/10.54499/UID/00144/2025}{https://doi.org/10.54499/UID/00144/2025}. DSP gratefully acknowledges the scholarship awarded by the Calouste Gulbenkian Foundation under the Novos Talentos programme, during which this project was initiated. TC was supported by ISblue project, Interdisciplinary graduate school for the blue planet  (ANR-17-EURE-0015) and co-funded by a grant from the French government under the program 'Investissements d'Avenir' embedded in France 2030.
}

\date{}

\keywords{Synchronisation or orbits, shortest distance, extreme values, spectral methods, extremal index} 
\subjclass[2010]{37A25, 37A50, 37B20,37C30,60G70}


\begin{abstract}
We establish Extreme Value Distributions for the closest encounter
between trajectories generated by different maps defined in the same
reference phase space. For a class of strongly mixing maps, we show
that the limit distribution depends on the
length of the different trajectories and the co-dimension of the
associated invariant measures. It is also modulated by an Extremal
Index, that informs on the tendency of nearby points to diverge along
with the evolution of their respective dynamics, serving as an indicator of their compatibility. We give a formula
for this quantity for a class of chaotic maps of the interval and for
the co-dimension in the case when the respective measures admit
densities with isolated zeros and singularities. We present diverse
examples of systems satisfying these assumptions and compute the
different parameters modulating the limit distribution.
\end{abstract}

\maketitle

\section{Introduction}

The statistical behaviour of the shortest distance between orbits and
trajectories of dynamical systems has been studied for several decades,
beginning at least with the pioneering work of \cite{CC94} and its
subsequent developments in \cite{CCC09}. In recent years, however, the
subject has attracted renewed attention, partly due to its connection with
sequence matching in symbolic dynamics, observed in
\cite{short}. 

Distributional results for trajectories generated by the same dynamical
system were obtained in \cite{d2,dq} using techniques from extreme value
theory, while almost sure (a.s.) convergence results for the shortest distance
between two orbits appear in \cite{short}, and for multiple orbits in
\cite{mult}. These a.s. results were later generalised to
observed systems \cite{encoded,matchobs} and to slowly mixing systems
\cite{matchslow}. These a.s. developments were carried further in the 
 work \cite{asymp}, which considers two trajectories driven by
different maps and derives asymptotic results for the shortest
distance, while the observed counterpart for distinct dynamical systems was
recently investigated in \cite{barros-coutinho}.

Further developments include the study of shortest distances between partial
orbits \cite{liu-shi} and extensions to conformal iterated function systems
\cite{shi-ifs}. Very recent advances on the minimal distance between random
orbits were obtained in \cite{grs}.

In this work, we study multiple trajectories generated by possibly distinct
maps and establish full distributional results for their minimal mutual
distance. In this setting, an extreme event corresponds to a strong form of
synchronisation: two or more trajectories, starting from independent initial
conditions and evolving under different dynamics, approach each other
arbitrarily close at the same time. We derive the limiting distribution,
identify the associated modulating parameters — namely the co-dimension
$C_q$ and the Extremal Index $\theta$ — and provide several explicit
examples illustrating the theory.

The co-dimension plays an important role in the normalising sequences that assure distributional convergence. It essentially accounts for the irregularities of the invariant measure, namely, when zeros or singularities of the density arise with a significant impact or when its support is made out of fractal sets, for example. In the case of smooth invariant densities the co-dimension is 1.

The Extremal Index which changes the limiting law is seen here to serve as an indicator of the compatibility between the factor dynamics.  Essentially, it detects when the factor dynamics have a tendency to remain synchronised, once a synchronisation occurs. Typically, different dynamics have irrelevant compatibility, in the sense that most of the same initial conditions lead to different trajectories (in measure-theoretic terms), which is reflected in an Extremal Index equal to 1. This is because, in these extreme value approaches, as in \cite{d2,synchro}, we consider an observable function that takes maximal values when the orbits synchronise and then, since when the trajectories synchronise there is no strong propensity for the system to remain synchronised, then we have no clustering of high observations and the Extremal Index takes the value 1. By contrast, when the factor dynamics coincide, namely, when one studies trajectories generated by the same map, as in \cite{CC94,CCC09,d2,dq}, synchronisation persists indefinitely once it occurs. In particular, this means that when we have near-synchronisation, corresponding to the observation of a high value, there is a clear predisposition to remain close to synchronisation, which creates clustering of high observations, reflected in an Extremal Index less than 1. Then, as observed in \cite{CC94}, the Extremal Index can be interpreted as an average of the rate of expansion of the factor dynamics. In more physical terms, the Extremal Index reflects the positive Lyapunov exponent, in low dimensional dynamics and the metric entropy in higher dimensions, as insightfully pointed out in \cite{d2,ei}. This is because these quantities essentially express the `tendency' of the orbits to move away from each other.

In this work, we introduce examples in which the factor dynamics are compatible without being identical. More precisely, the dynamics coincide on part of the phase space and differ on its complement. In this setting, the Extremal Index becomes a weighted average of the two regimes: it captures the average expansion rate on the region where the dynamics agree, while taking the value 1 on the complementary region. The corresponding weights are given precisely by the relative measures of these two parts of the phase space.

We will use spectral methods of perturbed Perron-Frobenius operators introduced by Keller and Liverani in \cite{kl99,kl} to establish an abstract result which will enable us to apply their framework to the problem of obtaining distributional limits for the closest distance between trajectories generated by possibly distinct dynamics. Then, we will develop applications of this result to illustrate the roles of the co-dimension and, especially, the Extremal Index in the interpretation of the different regimes. 

The structure of the paper is as follows. In Section~\ref{sec:setting}, we establish the setting and formulate the problem to be studied. In Section~\ref{sec:EVL_conditions}, we revise Keller and Liverani's framework, introduce the notation and gather the main tools. In Section~\ref{subsec:mainthm}, we prove the abstract result that we will use subsequently to study the distributional limits mentioned earlier. In Section~\ref{sec:EI}, we provide general conditions that allow us to prove that the Extremal Index is less than 1, corresponding to the typical case, when the factor dynamics is incompatible, which means that we have unclustered synchronisation profiles. In Section~\ref{sec:compatibility-EI}, we show the usefulness of the Extremal Index as an indicator of the compatibility of the factor dynamics, using the developed tools to compute it for linear dynamical systems, with which we play to provide an example of distinct factor systems with a compatible component. In Section~\ref{sec:examples}, we provide further applications of the theory developed to handle factor dynamics such as perturbed linear maps. We end with an appendix 
where we discuss the possible role the co-dimension and its impact on the estimation of the quantities considered earlier.

\section{The setting}
\label{sec:setting}
Let $\mathcal{M}$ be a compact metric space equipped with a distance
$d$. For some $q\in \mathbb{N}$, $q\ge 2$, we consider the dynamical
systems $(\mathcal{M},T_1,\mu_1)$,
$(\mathcal{M},T_2,\mu_2),\dots,(\mathcal{M},T_q,\mu_q)$, where for
each $i=1,\dots,q$, $T_i:\mathcal{M} \to \mathcal{M}$ is a discrete
transformation that leaves the probability measure $\mu_i$ invariant.
We will denote $\overline{\mu}:=\mu_1 \times \mu_2 \times \cdots
\times \mu_q$ the product measure with support in $\mathcal{M}^q$,
and $\overline{T}:\mathcal{M}^q\to \mathcal{M}^q$ the product map
\begin{equation}
\label{eq:def-product-system}
\overline{T}(\overline{x}):=(T_1(x_1),\ldots,T_q(x_q)),
\end{equation}
where $\overline{x}=(x_1,\ldots,x_q)\in \mathcal{M}^q$.

\begin{definition}\label{defdq}
We call the \emph{co-dimension} of the measures $\mu_1,\ldots,\mu_q$
the following quantity (if it exists):
\begin{equation}
C_q(\mu_1,\ldots,\mu_q):=\lim_{r\to 0}
\frac{\log \int_{\mathcal{M}} \prod_{i=2}^q\mu_i(B_r(x))\,d\mu_1(x)}
{(q-1)\log r},
\end{equation}
where $B_r(x)$ denotes the ball of radius $r$ centred at
$x\in \mathcal{M}$.
\end{definition}

\begin{remark}
By Fubini, we have that $C_q$ is symmetric in $\mu_1,\ldots,\mu_q$. 
Moreover, when $\mu_1=\mu_2=\cdots=\mu_q$,
 then  $C_q(\mu_1,\ldots,\mu_q)=D_q(\mu_1)$, which stands for
  the generalised Rényi dimension of order $q$ of $\mu_1$.
\end{remark}
We define the observable function 
\begin{align}
\varphi\colon [0,1]^q& \longrightarrow \R\cup\{\infty\}\nonumber\\ 
\left(x_1,\ldots,x_q\right)&\longmapsto -\log\max_{j\in\left\{2,\ldots,q\right\}} d\left(x_1,x_j\right),
 \label{eq:def-observable}
\end{align}
which will serve as a synchronisation indicator. We define now the process $(Y_i)_{i}$ simply by evaluating $\varphi$ along the orbits of the  system:
$$Y_i(\overline{x}):=\varphi \circ \bar T (\bar x)=-\log\max_{j=2,\dots,q}d(T_1^i (x_1),T_j^i (x_j)),$$
where $\overline{x}\in\mathcal{M}^q$ is drawn from $\overline{\mu}$.

Note that $Y_i=\infty$ means that the system is synchronised at time $i$ as all factors are in the same state, \ie $T_1^i(x_1)=T_2^i(x_2)=\ldots=T_q^i(x_q)$.

As usual, in order to study the Extreme Value properties of this process, we take a sequence of thresholds
$(u_n(s))_{n}$, $s>0$, satisfying
\begin{equation}\label{tau}
\overline{\mu}(Y_0 > u_n(s)) \sim \frac{s}{n}.
\end{equation}

Since the $q$ trajectories are independent (chosen with respect to the product measure),
\begin{equation}\label{dqf}
\overline{\mu}(Y_0> u_n(s))
  =\int_{\mathcal{M}} \prod_{i=2}^q\mu_i(B_{r_n}(x))\,d\mu_1(x)
  \sim e^{-u_n(s)C_q(q-1)},
\end{equation}
where $r_n=e^{-u_n}$. Matching \eqref{tau} and \eqref{dqf} gives
$$u_n(s)=\frac{\log n}{C_q(q-1)}-\frac{\log s}{C_q(q-1)}.$$

The quantity $\overline{\mu}(Y_0>u_n)$ corresponds to the probability that all
points $x_1,\ldots,x_q$ lie in the ball $B_{r_n}(x_1)$, \ie that
the product dynamics enters the target set
\begin{equation}
\label{eq:def-Deltan}
\Delta_n=\left\{\overline{x}\in \mathcal{M}^q :
  \max_{j=2,\dots,q}d(x_1,x_j) < r_n\right\}.
\end{equation}
Following \cite{fft}, we define 
\begin{equation}
\label{eq:def-Mn}
M_n:=\max\{Y_0,\dots,Y_{n-1}\}.
\end{equation}  
Observe that the event $\{M_n\leq u_n\}$ corresponds to the 
dynamics never entering $\Delta_n$, during the first $n$ steps, which means that the systems never came close to being synchronised. Also note that the value of $M_n$ is related to the shortest distance reached by $q$ trajectories up to time $n$.

\section{A spectral approach to study Extreme Value Laws}
\label{sec:EVL_conditions}

In \cite{kl99}, Keller and Liverani introduced a powerful framework of spectral perturbation methods, which, later, they developed further and applied to the study Rare Events and Escape Rates in \cite{kl}. This framework has since then been exploited to study Extreme Values in \cite{k,AFV15, AFGV25, AHV25}, for example. We will use these spectral perturbation techniques to obtain our main results and, therefore, we start by describing them and recalling their main features.

\subsection{Abstract perturbation results}
\label{subsec:KL}

Let $\left(V, \| \cdot\|\right)$ be a real or complex normed vector space with dual $\left(V^*, \|\cdot \|\right)$.
Suppose that we have a family of operators $\left(P_\varepsilon\right)_{\varepsilon\in E}$, satisfying
\begin{equation} \label{A0}
  \|P_\varepsilon\|\leq M, \tag{A0}
\end{equation}
where $E\subseteq \mathbb{R}$ is a closed set of parameters having $\varepsilon=0$ as an accumulation point.
Suppose that there are $\lambda_\varepsilon \in \mathbb{C}, \, \varphi_\varepsilon \in V, \, \nu_\varepsilon \in V^*$ and linear operators $Q_\varepsilon:V \to V$ such that
\begin{equation} \label{A1}
  \lambda_\varepsilon^{-1} P_\varepsilon=\varphi_\varepsilon \otimes \nu_\varepsilon+ Q_\varepsilon \tag{A1}
\end{equation}
\begin{equation}\label{A2}
  P_\varepsilon\left(\varphi_\varepsilon\right)=\lambda_\varepsilon \varphi_\varepsilon, \, \nu_\varepsilon P_\varepsilon =\lambda_\varepsilon \nu_\varepsilon, \, Q_\varepsilon\left( \varphi_\varepsilon\right)=0, \,\nu_\varepsilon Q_\varepsilon=0 \tag{A2}
\end{equation}
\begin{equation} \label{A3}
  \sum_{n=0}^\infty \sup_{\varepsilon\in E} \|Q_\varepsilon^n\|=:C_1 <\infty. \tag{A3}
\end{equation}
Conditions \eqref{A1}+\eqref{A2} imply that $\nu_\varepsilon\left(\varphi_\varepsilon\right)=1,$ for all $\varepsilon\in E$.
We also control the size of $\varphi_\varepsilon$ relatively to the size of $\varphi_0$.
\begin{equation} \label{A4}
  \nu_0\left(\varphi_\varepsilon\right)=1 \text{ and } \sup_{\varepsilon\in E} \|\varphi_\varepsilon\|=:C_2<\infty. \tag{A4}
\end{equation}
Define
\begin{equation} \label{delta_epsilon}
  \delta_\varepsilon:=\nu_0\left(\left(P_0-P_\varepsilon\right)\left(\varphi_0\right) \right)
\end{equation}
and suppose that there exists $C_3>0$ such that
\begin{equation} \label{A5}
  \eta_\varepsilon:=\left\|\nu_0\left(P_0-P_\varepsilon\right)\right\| \to 0 \text{ as } \varepsilon\to 0 \tag{A5}
\end{equation}
\begin{equation} \label{A6}
  \eta_\varepsilon \cdot \left\|\left(P_0-P_\varepsilon\right)\left(\varphi_0\right)\right\|\leq C_3 \left| \delta_\varepsilon\right|. \tag{A6}
\end{equation}
The main abstract result in \cite[(Theorem~2.1)]{kl} tells us that under 
  assumptions \eqref{A0}-\eqref{A6}, then, one of the following holds
  \begin{enumerate}[label=(\alph*)]
    \item There is $\varepsilon_0>0$ such that $\lambda_\varepsilon=\lambda_0$ if $\varepsilon \leq \varepsilon_0$ and $\delta_\varepsilon=0$.
    \item If $\delta_\varepsilon\neq0$ for all sufficiently small $\varepsilon \in E$ and if
    \begin{equation} \label{A7}
      p_k=:\lim_{\varepsilon\to 0} p_{k, \varepsilon}:=\lim_{\varepsilon\to 0} \frac{\nu_0\left(\left(P_0-P_\varepsilon \right)P_\varepsilon^k \left(P_0-P_\varepsilon\right)\left(\varphi_0\right)\right)}{\delta_\varepsilon} \tag{A7}
    \end{equation}
    exists for each integer $k\ge0$, then
    \begin{equation} \label{eqn:expansion}
      \lim_{\varepsilon\to0} \frac{\lambda_0-\lambda_\varepsilon}{\delta_\varepsilon}=1-\sum_{k=0}^\infty \lambda_0^{-(k+1)}p_k.
    \end{equation}
  \end{enumerate}

Moreover, as seen in \cite{kl99}, the following conditions are sufficient to verify \eqref{A0}-\eqref{A4}:
\begin{enumerate}[label=(B\arabic*)]
  \item \label{B1} The operator $P_0$ has the form
  \begin{equation} \label{B1.1 form of P_0}
    P_0=\varphi_0\otimes\nu_0+Q_0\text{, where } \nu_0\left(\varphi_0\right)=1 \text{, } \nu_0 Q_0=0 \text{, }Q_0\varphi_0=0, \tag{B1.1}
  \end{equation}
  and the spectral radius of $Q_0$ is strictly less than 1.
  \item \label{B2}There exists $r\in \left(0,1\right)$, $D>0$ and a semi-norm $\left| \cdot \right|_w \leq \|\cdot\|$ in $V$ such that
  \begin{equation} \label{B2.1 res spec}
    \text{The residual spectrum of $P_\varepsilon$ is contained in $\left \{z\in\mathbb{C}:|z|\leq r \right\}$}. \tag{B2.1}
  \end{equation}
  \begin{equation} \label{B2.2 P_n uniformly bounded in w norm}
    \forall \varepsilon\in E, \, \forall f \in V, \, \forall n \in \mathbb{N}, \, \left| P^n_\varepsilon f \right|_w \leq D \left| f\right|_w \tag{B2.2}
  \end{equation}
  \begin{equation} \label{B2.3 Condition for P^n ||,|| norm}
    \forall \varepsilon \in E, \, \forall f \in V, \, \forall n \in \mathbb{N}, \, \|P^n_\varepsilon f\| \leq Dr^n \|f \|+D \left|f\right|_w \tag{B2.3}
  \end{equation}
  \item \label{B3} There exists a decreasing upper-semicontinous function $\pi_\varepsilon:E \to (0, \infty)$ such that
  \begin{equation} \label{B3.1 continuity_triple_norm}
    \lim_{\varepsilon\to 0} \pi_\varepsilon=0 \text{ and } \forall f \in V, \space \forall \varepsilon \in E: \left|P_\varepsilon f -P_0 f \right|_w\leq \pi_\varepsilon \|f\|. \tag{B3.1}
  \end{equation}
\end{enumerate}
\begin{remark}
  The following notation will be useful.
  Let $R: V\to V$ be a linear operator. We define
  \begin{equation}
    \vertiii{R}:=\sup \left\{\left| Rf\right|_w: f\in V, \,\|f\|\leq 1 \right\}.
  \end{equation}
  With this notation, we can write the last part of condition \ref{B3} as $\vertiii{P_\varepsilon-P_0}\leq \pi_\varepsilon$.
\end{remark}

\subsection{Extreme Values from a spectral approach}
\label{subsec:REPFO}
Following \cite{k}, we consider the so called Rare event Perron–Frobenius operators (REPFO) setting. Let $[0,1]^q$ be the phase space, and $\mathrm{Leb}$ the reference measure defined on the Borel subsets of $[0,1]^q$.
Assume that $V$ is a Banach space of functions continuously embedded and dense in $L^1\left([0,1]^q, \mathrm{Leb}\right)$ which contains the constant functions and $P_0$ is the transfer operator of a non-singular map $T : [0,1]^q \to [0,1]^q$ with respect to the reference measure $\mathrm{Leb}$.
Recall that in this setting, $\mathrm{Leb}$ defines a functional on $V$, by associating to each $f \in V$ the number $\int f \, d\mathrm{Leb}$. We assume that $\nu_0=\mathrm{Leb}$.
We also assume that there exists a family $\left(A_n\right)_{n\in\mathbb{N}}$ of sufficiently regular subsets of $[0,1]^q$ such that
\begin{enumerate}[label=(R\arabic*)]
  \item \label{R1}For all $f\in V$, $\mathds{1}_{A_n}\cdot f\in V$.
  \item \label{R2} The operators $P_n$, defined by $P_n f=P_0\left(\mathds{1}_{[0,1]^q\setminus A_n}f \right)$ satisfy assumptions \ref{B1}-\ref{B3}, with $\left|\cdot \right|_w=\|\cdot\|_{L^1\left([0,1]^q, \mathrm{Leb}\right)}$.
  \item \label{R3} There exists $C>0$ such that for all $f\in V$, $\left|\nu_0\left(\mathds{1}_{A_n}f\right)\right| \cdot \|\mathds{1}_{A_n} \varphi_0\| \leq C \|f\| \cdot \left|\nu_0\left(\mathds{1}_{A_n} \varphi_0 \right) \right|$.
\end{enumerate}
We note that since \ref{B1} is satisfied, $\mu:= \varphi_0\,\mathrm{Leb}$ is the unique invariant measure that is absolutely continuous with respect to the reference measure $\mathrm{Leb}$. In what follows, we will also denote $h = \varphi_0$.

\begin{lemma} \label{lemma: critera for R3}
  Assume that $V$ is a space of functions on $[0,1]^q$ that is a Banach algebra and is continuously embedded in $L^\infty\left([0,1]^q, \mathrm{Leb}\right)$.
  Suppose that $\inf_{x\in \bigcup_{n}A_n} h>0$.
  In addition, suppose that $\|\mathds{1}_{A_n} \|$ is uniformly bounded.
  Then \ref{R3} is satisfied.
\end{lemma}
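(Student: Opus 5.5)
We need to show that $\left|\nu_0(\mathds{1}_{A_n}f)\right|\cdot\|\mathds{1}_{A_n}h\| \le C\|f\|\cdot\left|\nu_0(\mathds{1}_{A_n}h)\right|$. The plan is to bound the two factors on the left separately and then use the positivity of $h$ on $A_n$ to dominate $\mathrm{Leb}(A_n)$ by $\nu_0(\mathds{1}_{A_n}h)$.

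\textbf{Step 1: the first factor.} Since $\nu_0=\mathrm{Leb}$ and $V$ embeds continuously in $L^\infty$, there is a constant $c_\infty$ with $\|g\|_{L^\infty}\le c_\infty\|g\|$ for all $g\in V$. Hence
\[
\left|\nu_0(\mathds{1}_{A_n}f)\right| \le \int_{A_n}|f|\,d\mathrm{Leb}\le \mathrm{Leb}(A_n)\,\|f\|_{L^\infty}\le c_\infty\,\mathrm{Leb}(A_n)\,\|f\|.
\]

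\textbf{Step 2: the second factor.} Because $V$ is a Banach algebra, after renormalising so that the norm is submultiplicative, or otherwise absorbing a constant $c_{\mathrm{alg}}$, we get
\[
\|\mathds{1}_{A_n}h\|\le c_{\mathrm{alg}}\|\mathds{1}_{A_n}\|\,\|h\|\le c_{\mathrm{alg}}K\|h\|,
\]
where $K:=\sup_n\|\mathds{1}_{A_n}\|<\infty$ by hypothesis. Here $h=\varphi_0\in V$ is fixed, so $\|h\|$ is a finite constant independent of $n$.

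\textbf{Step 3: the right-hand side.} Set $m:=\inf_{\bigcup_n A_n}h>0$. Since $h\ge 0$ (it is the invariant density), and $h\ge m$ on $A_n$ up to null sets,
\[
\left|\nu_0(\mathds{1}_{A_n}h)\right|=\int_{A_n}h\,d\mathrm{Leb}\ge m\,\mathrm{Leb}(A_n).
\]

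\textbf{Conclusion.} Combining the three steps,
\[
\left|\nu_0(\mathds{1}_{A_n}f)\right|\cdot\|\mathds{1}_{A_n}h\|\le c_\infty c_{\mathrm{alg}}K\|h\|\,\mathrm{Leb}(A_n)\,\|f\|\le \frac{c_\infty c_{\mathrm{alg}}K\|h\|}{m}\,\|f\|\,\left|\nu_0(\mathds{1}_{A_n}h)\right|,
\]
so \ref{R3} holds with $C=c_\infty c_{\mathrm{alg}}K\|h\|/m$.

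\textbf{Main point to check.} The argument is short, and the only step needing care is Step 3: interpreting the infimum as an essential infimum, and ensuring $h$ is real and nonnegative so that the absolute value equals the integral. Both follow from $h$ being an invariant probability density under \ref{B1}.
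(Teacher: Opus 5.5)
Your proof is correct and follows the paper's argument step for step. You bound $\left|\nu_0(\mathds{1}_{A_n}f)\right|$ by a constant times $\|f\|\,\mathrm{Leb}(A_n)$ via the $L^\infty$ embedding, control $\|\mathds{1}_{A_n}h\|$ uniformly in $n$ via the Banach algebra property, and replace $\mathrm{Leb}(A_n)$ by $\nu_0(\mathds{1}_{A_n}h)/\inf_{\bigcup_n A_n}h$ (your concern about global nonnegativity of $h$ is unnecessary, since the hypothesis $h\ge m>0$ on $A_n$ already makes $\int_{A_n}h\,d\mathrm{Leb}$ nonnegative).
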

\begin{proof}
  Let $f \in V$. Since $V\subseteq L^\infty\left([0,1]^q, \mathrm{Leb}\right)$,
  \begin{equation*}
    \left|\nu_0\left(\mathds{1}_{A_n}f\right)\right|=\left|\int\mathds{1}_{A_n}f\, d\mathrm{Leb} \right| \leq \|f\|_{L^\infty}\cdot \left|\int\mathds{1}_{A_n}\, d\mathrm{Leb} \right|.
  \end{equation*}
  Using the fact that the injection of $V$ in $L^\infty$ is continuous, we get
  \begin{equation*}
    \left|\nu_0\left(\mathds{1}_{A_n}f\right)\right|\leq C_1 \|f\| \cdot \left|\int\mathds{1}_{A_n}\, d\mathrm{Leb}\right|.
  \end{equation*}
  The assumption that $\inf_{x\in \bigcup_{n}A_n} h>0$ allows us to evaluate $\mathds{1}_{A_n}$ with respect to the measure $\mu:=h\,\mathrm{Leb}$:
  \begin{equation*}
    \left|\int\mathds{1}_{A_n}\, d\mathrm{Leb}\right|\leq \frac{1}{\inf_{x\in \bigcup_{n}A_n} h}\left|\int\mathds{1}_{A_n} h\, d\mathrm{Leb}\right|.
  \end{equation*}
  Since $\left(V, \|\cdot \|\right)$ is a Banach algebra and $\|\mathds{1}_{A_n} \|$ is uniformly bounded, we conclude that
  \begin{equation*}
    \|\mathds{1}_{A_n}h\| \leq C_2'\|\mathds{1}_{A_n}\|\cdot\|h\| \leq C_2.
  \end{equation*}
  Putting all of this together,
  \begin{equation*}
    \left|\nu_0\left(\mathds{1}_{A_n}f\right)\right|\cdot \|\mathds{1}_{A_n}h\|\leq \frac{C_1C_2 \|f\|}{\inf_{x\in \bigcup_{n}A_n} h}\left|\int\mathds{1}_{A_n} h\, d\mathrm{Leb}\right| \leq C \|f\|\cdot \left|\nu_0\left(\mathds{1}_{A_n}h \right) \right|,
  \end{equation*}
  which concludes the proof.
\end{proof}

Now,  we illustrate how one can establish the existence of a limiting EVL in a REPFO setting.
According to \cite[Proposition 1]{k}, the operators $\left(P_n\right)_n$ satisfy conditions \ref{B1}-\ref{B3}, so 
$$P_n=\varphi_n\otimes \nu_n+Q_n\quad \text{and}\quad \sup_{n}\|Q_n^k\|\leq K_3 r^k,\quad \text{where}\quad 0<r<1.$$
Letting $\delta_n:=\int\mathds{1}_{A_n} h \,d\mathrm{Leb}=\mu\left(A_n\right)\neq 0$, for all sufficiently large $n$, we have by \eqref{eqn:expansion} the expansion:
\begin{equation}
  \lim_{n\to\infty}\frac{1-\lambda_n}{\delta_n}=\theta :=1-\sum_{j=0}^\infty p_j ,
\end{equation}
if we assume now that for each $k\in \mathbb{N}$, the following limit exists:
\begin{equation} \label{definition:p_k}
  p_k=\lim_{n\to \infty} p_{k,n} := \frac{\mu\left(A_n\cap T^{-1}A_n^c \cap\cdots\cap T^{-k}A_n^c \cap T^{-(k+1)}A_n\right)}{\mu\left(A_n\right)}.
\end{equation}

The first hitting time to the set $A_n$ is $\tau_n(x)=\inf\{i \ge 0:T^i x \in A_n\}$. Since $\nu_0=\mathrm{Leb}$ and that $\nu_0\left(\varphi_n\right)=1$, we have
\begin{align*}
  \mu\left(\left\{\tau_n\ge k\right\}\right)=\int_{\left\{\tau_n\ge k\right\}}h \,d\mathrm{Leb}&= \int\prod_{i=0}^{k-1} \left(\mathds{1}_{[0,1]^q\setminus A_n}\circ T^i\right)h\,d\mathrm{Leb}\\
  &=\int P_n^k h\, d\mathrm{Leb}\\
  &=\lambda_n^k\nu_n\left(h\right)+\mathcal{O}\left(\lambda_n^k\|Q^k_n\|\cdot\|h\|\right).
\end{align*}
The quantity $\nu_n\left(h\right)\to 1$ as $n\to\infty$.
Suppose that $n\mu\left(A_n\right)\to s$ for some $s>0$. Then,
\begin{equation}
  \lim_{n\to \infty}\mu\left(\left\{ \tau_n\ge n\right\} \right)=\lim_{n\to\infty}\exp\left(-n\mu\left(A_n\right)\theta+o\left(\mu\left(A_n\right)\right)\cdot n\right)=e^{-\theta s}.
\end{equation}
In particular, returning to our original synchronisation problem described in Section \ref{sec:setting}, if $\varphi:[0,1]^q\to \mathbb{R}\cup \left\{\infty\right\}$ is given as in \eqref{eq:def-observable} and there exists a sequence $u_n$ such that $n\mu\left(\left\{\varphi>u_n\right\}\right)\to s$ and if the sets $\Delta_n=\left\{\varphi>u_n\right\}$ satisfy \ref{R1}-\ref{R3}, then
\begin{equation*}
  \lim_{n\to\infty}\mu\left( M_n \leq u_n \right)=e^{-\theta s}.
\end{equation*}

\section{Abstract limiting results for the product of interval maps}
\label{subsec:mainthm}

In this section we prove the existence of an extreme value law for
the process $(Y_i)$ of Section~\ref{sec:EVL_conditions} using the spectral
approach. We consider $\mathcal{M}=[0,1]^q$ and denote by $\mu_i=h_i\,\mathrm{Leb}$
the absolutely continuous invariant probability measures. We assume that the product map
$\tilde{T}:[0,1]^q\to[0,1]^q$ is piecewise smooth with branches
indexed by $j$ to distinguish from the indices $i=1,\ldots,q$ of the factors; we write $T_{j,i}$ for the restriction of the factor $T_i$ of $\tilde T$ to the domain of injectivity of the $j$-th branch of $\tilde T$. For this kind of maps the transfer operator can be expressed as 
$$
P(f)(\mathbf{x})=\sum_{\tilde T(\mathbf{y})=\mathbf{x}}\frac{f(\mathbf{y})}{\operatorname{Jac}\tilde T(\mathbf{y})}.
$$

\subsection{Quasi-H\"{o}lder space and Lasota-Yorke inequality}\label{subsec:valpha}

Before stating our main result, we need to introduce the functional space where we will work on. It was introduced by Saussol in \cite{saussol}. We define the quantity
\begin{equation} \label{Equation: V_alpha norm}
  \left|f\right|_\alpha=\sup_{0<\epsilon\leq \epsilon_0} \epsilon^{-\alpha}\int_{[0,1]^q} \operatorname{osc}\left(f, B_\epsilon\left(x\right)\right)\, dx.
\end{equation}
and the $\alpha$-Quasi-H\"older space by
\begin{equation} \label{Equation: V_alpha space}
  V_\alpha=\{f \in L^1([0,1]^q, \mathrm{Leb}):\left|f\right|_\alpha<\infty\}.
\end{equation}
The norm considered in $V_\alpha$ is $||\cdot||_\alpha=||\cdot||_{L^1\left([0,1]^q, \mathrm{Leb}\right)}+|\cdot|_\alpha$. As shown in \cite{saussol}, $V_\alpha$ is a Banach space, compactly embedded in $L^1$, continuously embedded in $L^\infty$, and is a Banach algebra. The space $V_\alpha$is independent of the choice of the parameter $\epsilon_0$, and different choices of $\epsilon_0$ yield equivalent norms.

We will assume that the transfer operator of the map $\tilde{T}$ satisfies a Lasota-Yorke inequality for the first iteration:

\begin{enumerate}
\item[(LY)] \text{There exist $\eta \in (0,1)$ and $D >0$ such that, for all $f \in V_\alpha$,}
\begin{equation}
\tag{LY}\label{cond:LY}
\|Pf\|_\alpha \le \eta \|f\|_\alpha + D \|f\|_{L^1}.
\end{equation}
\end{enumerate}

\subsection{Synchronisation distributional limits}
We now state and prove the main result of this section.
Consider the set $[0,1]^q$, where $q$ is an integer greater or equal to 2.
Let $\mathrm{Leb}$ be the reference measure. Consider the product dynamical system $\bar T\colon [0,1]^q \to [0,1]^q$ and the synchronisation observable function $\varphi\colon[0,1]^q \to \mathbb{R}\cup \left\{\infty\right\}$ given in \eqref{eq:def-product-system} and \eqref{eq:def-observable}, respectively.

\begin{theorem} \label{Thrm: Main result using spectral approach}
   Assume that the transfer operator of $\bar T$ satisfies conditions \ref{cond:LY} and \ref{B1} and that $h$ is bounded away from zero, \ie there exists $c>0$ such that $h\ge c$. Let $\mu:=h\,\mathrm{Leb}$. Assume that there exists a sequence $\left(u_n\right)_{n}$ such that
  \begin{equation} \label{equation: un in thrm spec approach}
    \lim_{n\to\infty} n\mu\left(\Delta_n\right)=\lim_{n\to\infty} n\mu\left(\left\{\varphi>u_n\right\}\right)=s,
  \end{equation}
  for some $s>0$, and, moreover, assume that for each $k\ge 0$, the limit
  \begin{equation} \label{equation: pk}
    p_k:=\lim_{n\to\infty} \frac{\mu\left(\Delta_n\cap \bar T^{-1}\Delta_n^c \cap\cdots\cap \bar T^{-k}\Delta_n^c \cap \bar T^{-(k+1)}\Delta_n\right)}{\mu\left(\Delta_n\right)}
  \end{equation}
  exists.
  Then, for all $s>0$, we have
  \begin{equation} \label{equation: final EVL spec ap thrm}
    \lim_{n\to\infty}\mu\left(M_n\leq u_n\right)=e^{-\theta s},
  \end{equation}
  where $\theta:=1-\sum_{k=0}^\infty p_k$.
\end{theorem}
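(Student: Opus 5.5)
The plan is to reduce the statement to the REPFO machinery of Section~\ref{subsec:REPFO} with $A_n=\Delta_n$, $V=V_\alpha$ and $|\cdot|_w=\|\cdot\|_{L^1}$. The computation at the end of that subsection then gives $\lim_n\mu(\tau_n\ge n)=e^{-\theta s}$ with $\theta=1-\sum_k p_k$, and $\{\tau_n\ge n\}=\{M_n\le u_n\}$. Everything therefore comes down to verifying \ref{R1}, \ref{R2} and \ref{R3} for the sets $\Delta_n$. The limits $p_k$ are assumed to exist, and $\delta_n=\mu(\Delta_n)>0$ for large $n$ by \eqref{equation: un in thrm spec approach}.

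\emph{Step 1: regularity of $\Delta_n$ (R1).} First I will show that $\sup_n\|\mathds 1_{\Delta_n}\|_\alpha<\infty$. The set $\Delta_n=\{\max_j |x_1-x_j|<r_n\}$ is a polyhedral region in $[0,1]^q$: an intersection of slabs $\{|x_1-x_j|<r_n\}$. The oscillation of $\mathds 1_{\Delta_n}$ on $B_\epsilon(x)$ is nonzero only when $x$ lies within $\epsilon$ of $\partial\Delta_n$. The boundary is a union of at most $2(q-1)$ pieces of hyperplanes with bounded $(q-1)$-volume, uniformly in $n$, so $\mathrm{Leb}(\{x: d(x,\partial\Delta_n)<\epsilon\})\le C\epsilon$. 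Hence $|\mathds 1_{\Delta_n}|_\alpha\le C\epsilon_0^{1-\alpha}$ uniformly in $n$, using $\alpha\le1$. Since $V_\alpha$ is a Banach algebra, $\mathds 1_{\Delta_n}f\in V_\alpha$, which gives \ref{R1}. The same bound holds for the complement, with $\|\mathds 1_{\Delta_n^c}f\|_\alpha\le K\|f\|_\alpha$ and $K$ independent of $n$.

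\emph{Step 2: Keller--Liverani conditions for $P_n f=P(\mathds 1_{\Delta_n^c}f)$ (R2).} Condition \ref{B1} holds for $P_0=P$ by hypothesis. For \ref{B2.2}, note that $P_n$ is a positive operator dominated by $P$, so $\|P_n^kf\|_{L^1}\le\|f\|_{L^1}$. For \ref{B2.3} I need a uniform Lasota--Yorke inequality for the iterates $P_n^k$. Here I expect the main obstacle: multiplying by $\mathds 1_{\Delta_n^c}$ costs the factor $K$ from Step 1, so the naive bound $\|P_nf\|_\alpha\le\eta K\|f\|_\alpha+D\|f\|_{L^1}$ may fail to contract. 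I would first refine Step 1 into the estimate $|\mathds 1_{\Delta_n^c}f|_\alpha\le|f|_\alpha+C\epsilon_0^{1-\alpha}\|f\|_\infty$, using that $\operatorname{osc}(\mathds 1_Af,B)\le\operatorname{osc}(f,B)+\|f\|_\infty\mathds 1_{\{d(\cdot,\partial A)<\epsilon\}}$. Then I would bound $\|f\|_\infty$ by $C_{\epsilon_0}\|f\|_\alpha$ (Saussol's embedding) and choose $\epsilon_0$ small to absorb the error, perhaps after passing to a power $P^N$ to get $\eta$ small. Then iterate. If this route fails, the fallback is to assume (LY) in the usual form that already incorporates the holes, as is standard in Saussol's setting. \ref{B2.1} then follows from Hennion's theorem, by compactness of $V_\alpha\hookrightarrow L^1$. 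For \ref{B3}, $\|(P-P_n)f\|_{L^1}\le\int\mathds 1_{\Delta_n}|f|\le\|f\|_\infty\mathrm{Leb}(\Delta_n)\le C\|f\|_\alpha\, r_n^{q-1}\to0$, so $\pi_n=C\,\mathrm{Leb}(\Delta_n)$ works.

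\emph{Step 3: (R3) and conclusion.} Condition \ref{R3} is exactly Lemma~\ref{lemma: critera for R3}. Its hypotheses are that $V_\alpha$ is a Banach algebra embedded in $L^\infty$, that $h\ge c>0$, and that $\|\mathds 1_{\Delta_n}\|_\alpha$ is uniformly bounded (Step 1). With \ref{R1}--\ref{R3} verified, \cite[Theorem~2.1]{kl} gives $(1-\lambda_n)/\mu(\Delta_n)\to\theta$. We then write
\begin{equation*}
\mu(M_n\le u_n)=\int P_n^n h\,d\mathrm{Leb}=\lambda_n^n\nu_n(h)+O(r^n).
\end{equation*}
Since $\nu_n(h)\to1$ and $\lambda_n^n=\exp(n\log(1-\theta\mu(\Delta_n)+o(1/n)))\to e^{-\theta s}$, this yields \eqref{equation: final EVL spec ap thrm}.
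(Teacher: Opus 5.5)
Your reduction to (R1)--(R3) and your treatment of (R1), (R3), (B2.1), (B2.2), (B3) and of the final computation agree with the paper. The gap is the step you flagged yourself, the uniform inequality (B2.3), and the repair you sketch does not close it.

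Your intermediate bound $|\mathds 1_{\Delta_n^c}f|_\alpha\le|f|_\alpha+C\epsilon_0^{1-\alpha}\|f\|_\infty$ is correct. The absorption step is what fails:
\begin{itemize}
\item The constant in $\|f\|_\infty\le C_{\epsilon_0}\|f\|_\alpha$ is of order $\epsilon_0^{-q}$, and this is sharp (take a bump of width much smaller than $\epsilon_0$). Even the split form $\|f\|_\infty\le(\gamma_q\epsilon_0^q)^{-1}(\|f\|_{L^1}+\epsilon_0^\alpha|f|_\alpha)$ leaves a coefficient of order $\epsilon_0^{1-q}$ in front of $|f|_\alpha$. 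So shrinking $\epsilon_0$ makes the error worse, not better. In addition, $\eta$ and $D$ in (LY) are tied to a fixed $\epsilon_0$.
\item Passing to a power does not help either. You get $P_n^Nf=P^N(\mathds 1_{E_N}f)$ with $E_N=\bigcap_{i<N}\bar T^{-i}\Delta_n^c$, whose boundary complexity grows exponentially in $N$, and you give no reason why $\eta^N$ should beat that growth.
\item The fallback adds a hypothesis rather than proving the theorem as stated.
\end{itemize}
What is missing is an estimate where the hole costs only a factor close to $1$ in front of $|f|_\alpha$, with the remainder in $\|f\|_{L^1}$ rather than $\|f\|_\infty$. The paper obtains this by quoting from \cite{synchro} the bound $|\mathds 1_{\Delta_n^c}f|_\alpha\le(1+C_qr_n)|f|_\alpha$, which gives the contraction constant $\eta(1+C_qr_{n_0})<1$. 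Be aware that this cannot hold verbatim: for $f\equiv 1$ the right-hand side vanishes. So at least an $\|f\|_{L^1}$ term must be added.

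For $\alpha<1$ you can prove a correct version yourself, uniformly in $n$.
\begin{itemize}
\item For any ball $B$ meeting $\partial\Delta_n$ one has $\operatorname{osc}(\mathds 1_{\Delta_n^c}f,B)\le\operatorname{osc}(f,B)+\operatorname{ess\,inf}_B|f|$. Hence the extra contribution at scale $\epsilon$ is at most $C\epsilon^{-\alpha}\int_{B_{2\epsilon}(\partial\Delta_n)}|f|$.
\item For $\epsilon\ge\kappa\epsilon_0$, bound this by $C(\kappa\epsilon_0)^{-\alpha}\|f\|_{L^1}$.
\item For $\epsilon<\kappa\epsilon_0$, average $|f|$ over $\epsilon_0$-balls as in Saussol's proof of the $L^\infty$ embedding. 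Use that $B_{2\epsilon}(\partial\Delta_n)$, a neighbourhood of $2(q-1)$ hyperplanes, fills a fraction at most $c\,\epsilon/\epsilon_0$ of every $\epsilon_0$-ball, independently of $n$.
\end{itemize}
This gives $|\mathds 1_{\Delta_n^c}f|_\alpha\le(1+c\kappa^{1-\alpha})|f|_\alpha+C_\kappa\|f\|_{L^1}$. Choose $\kappa$ with $\eta(1+c\kappa^{1-\alpha})<1$ and iterate, using $\|P_n^jf\|_{L^1}\le\|f\|_{L^1}$; this yields (B2.3).

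For $\alpha=1$ no multiplier bound of this kind exists. For $q=2$, the function $f=\mathds 1_{\{|x_1-x_2|<w\}}$ with $r_n\ll w\ll\epsilon_0$ has $|\mathds 1_{\Delta_n^c}f|_1\approx 2|f|_1$, while $\|f\|_{L^1}\approx 2w$. In that case one genuinely needs additional input, for instance a Lasota--Yorke inequality for $\bar T$ with $\partial\Delta_n$ added to its discontinuity set, which is essentially your fallback.
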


\begin{remark}
  This theorem guarantees the existence of an EVL, once we verify that a sequence $(u_n)_n$ satisfying \eqref{equation: un in thrm spec approach} exists and that the limits $p_k$ exist. These conditions will be addressed in Sections~\ref{sec:EI}, \ref{sec:examples} and \ref{sec:compatibility-EI}.
\end{remark}

\begin{proof}
From discussion in the previous section, in order to prove Theorem~\ref{Thrm: Main result using spectral approach}, it suffices to check conditions \ref{R1}-\ref{R3} for the space $\left(V_\alpha, \|\cdot\|_\alpha\right)$.
The indicator function $\mathds{1}_{\Delta_n}$ over the strip along the diagonal, $\Delta_n = \{\varphi > u_n\}$, belongs to $V_\alpha$: its boundary has $(q-1)$-dimensional Lebesgue measure $O(r_n)$ (where $r_n = e^{-u_n}$), so $|\mathds{1}_{\Delta_n}|_\alpha = O(1)$. This together with the fact that $V_\alpha$ is a Banach algebra implies \ref{R1}. It is clear that all the assumptions of Lemma \ref{lemma: critera for R3} are satisfied, so \ref{R3} holds.
It remains to check condition \ref{R2}.
Starting by condition \ref{B3}, we see that
\begin{align*}
  \left\|P_n f -P_0 f \right\|_{L^1}\leq \left\|P_0\left(\mathds{1}_{\Delta_n}f \right) \right\|_{L^1}= \left\|\mathds{1}_{\Delta_n}f\right\|_{L^1} \\ \leq \|f\|_{L^\infty} \cdot \|\mathds{1}_{\Delta_n}\|_{L^1} \leq C \mu\left(\Delta_n\right)\|f\|_\alpha.
\end{align*}
This means that condition \ref{B3} is satisfied, with $\pi_n=C\mu\left(\Delta_n\right)$.
Condition \eqref{B2.2 P_n uniformly bounded in w norm} follows from the fact that
\begin{equation*}
  \|P_n^kf\|_{L^1}=\|P_0^k\left(\mathds{1}_{\Delta_n^c}f\right)\|_{L^1}=\|\mathds{1}_{\Delta_n^c}f\|_{L^1} \leq \|f\|_{L^1}.
\end{equation*}
To prove condition \eqref{B2.3 Condition for P^n ||,|| norm}, we note that, using \ref{cond:LY}:
\begin{align*}
  \|P_n f\|_\alpha=\|P_0\left(\mathds{1}_{\Delta_n^c}f\right)\|_\alpha &\leq \eta \| \mathds{1}_{\Delta_n^c}f\|_\alpha + D \|\mathds{1}_{\Delta_n^c}f\|_{L^1} \\
  &\leq\eta| \mathds{1}_{\Delta_n^c}f |_\alpha + (1+ D) \|f\|_{L^1}.
\end{align*}
To estimate $| \mathds{1}_{\Delta_n^c}f |_\alpha$ we proceed as in \cite[p3326]{synchro} and we obtain 
\[
| \mathds{1}_{\Delta_n^c}f |_\alpha \le |f|_\alpha ( 1 + C_q r_n),
\] 
where $C_q$ depends only on the dimension $q$ and the parameter $\epsilon_0$, and $r_n = e^{-u_n}$.
Condition \eqref{B2.3 Condition for P^n ||,|| norm} then follows for all large $n \ge n_0$ by setting $r = \eta (   1 + C_q r_{n_0}) <1$ and by iterating the inequality for $P_n^k$.
Condition \eqref{B2.1 res spec} follows from the fact that the unit ball of $(V_\alpha, \|\cdot\|_\alpha)$ is compact in the $L^1$-norm. \end{proof}

\subsection{Application to the product of piecewise expanding maps}
\label{subsec:piecewise-expanding-example}

In practice, to check the assumptions of Theorem \ref{Thrm: Main result using spectral approach}, we will need several results from Saussol \cite{saussol} that we recall below.

We say that a map $\tilde T:[0,1]^q \to [0,1]^q$ is piecewise expanding if there exists an at most countable family of disjoint open sets $U_j \subseteq [0,1]^q$ and $V_j$ such that $\overline{U_j}\subseteq V_j$ and maps $\tilde T_j:V_j \to \mathbb{R}^n$ satisfying, for some $0<\alpha\leq1$ and $\epsilon_0>0$, the following properties:
\begin{enumerate}[label=(PE\arabic*)]
  \item\label{condition: PE1} $\tilde T_j|_{U_j}=\tilde T|_{U_j}$.
  \item\label{condition: PE2} For all $j$, $\tilde T_j \in C^1(V_j)$, $\tilde T_j$ is injective and $\tilde T_j^{-1}\in C^1(\tilde T_j(V_j))$. Moreover, the determinant is uniformly H\"older: for all $i, \epsilon \leq \epsilon_0, z \in \tilde T_j (V_j)$ and $x,y\in B_\epsilon(z)\cap \tilde T_j V_j$,
  \begin{equation} \label{equation: PE2}
    \left|\operatorname{det}D_x\tilde T_j^{-1}-\operatorname{det}D_y\tilde T_j^{-1}\right|\leq c \left|\operatorname{det}D_z\tilde T_j^{-1}\right|\epsilon^\alpha.
  \end{equation}
  \item\label{condition: PE3} $\mathrm{Leb}\left([0,1]^q\setminus \bigcup_j U_j\right)=0$.
  \item\label{condition: PE4} There exists $\sigma<1$ such that for all $u,v \in \tilde T(V_j)$, with $d(u,v)\leq\epsilon_0$, we have $d(T_j^{-1}u, T_j^{-1}v)\leq \sigma\, d(u,v)$.
  \item\label{condition: PE5} Define $G(\epsilon,\epsilon_0):=\sup_{x\in [0,1]^q}G(x,\epsilon,\epsilon_0)$, where
  $$G(x,\epsilon,\epsilon_0):=\sum_j\frac{\mathrm{Leb}\left(T_j^{-1}B_\epsilon\left(\partial T(U_j)\right) \cap B_{(1-s)\epsilon_0}\left(x\right) \right)}{\mathrm{Leb}\left(B_{(1-s)\epsilon_0}\left(x\right)\right)}.$$
  The quantity $\eta(\epsilon_0):=\sigma^\alpha+2\sup_{\epsilon\leq \epsilon_0} \frac{G(\epsilon)}{\epsilon^\alpha}\epsilon_0^\alpha$ is such that $\sup_{\delta\leq \epsilon_0}\eta(\delta)<1$.
\end{enumerate}

In order to check condition \ref{cond:LY}, we have the following lemma.

\begin{lemma}[{\cite[Lemma 4.1]{saussol}}] \label{lemma: Lasota_Yorke in Valpha}
  Let $P$ be the transfer operator associated with a map $\tilde T$ satisfying \ref{condition: PE1}-\ref{condition: PE5}. Then, provided $\epsilon_0$ is small enough,  there exist $\eta \in (0,1)$ and $D <\infty$ such that, for all $f\in V_\alpha$, we have that $Pf\in V_\alpha$, with
  \begin{equation} \label{equation: LY for V_alpha}
    \left|Pf\right|_\alpha\leq\eta \left|f\right|_\alpha+D\|f\|_{L^1\left([0,1]^q, \mathrm{Leb}\right)}.
  \end{equation}
\end{lemma}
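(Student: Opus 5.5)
The plan is to follow Saussol's argument: write the transfer operator branch by branch, estimate the oscillation of each branch contribution on small balls, and then integrate. By \ref{condition: PE1} and \ref{condition: PE3} we have, Lebesgue-a.e.,
\[
Pf=\sum_j \mathds{1}_{\tilde T(U_j)}\,(f g_j)\circ \tilde T_j^{-1},\qquad g_j:=\left|\operatorname{det}D\tilde T_j\right|^{-1}.
\]
Subadditivity of the oscillation gives, for each $x$ and $\epsilon\le\epsilon_0$,
\[
\operatorname{osc}(Pf,B_\epsilon(x))\le \sum_j \operatorname{osc}\bigl(\mathds{1}_{\tilde T(U_j)}(fg_j)\circ\tilde T_j^{-1},B_\epsilon(x)\bigr).
\]
The $j$-th term is handled in two regimes, according to whether $B_\epsilon(x)\subseteq \tilde T(U_j)$ or $x\in B_\epsilon(\partial \tilde T(U_j))$.

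\emph{Interior regime.} Here the indicator is constant on the ball. Since $\tilde T_j^{-1}$ contracts distances by $\sigma$ by \ref{condition: PE4}, we get $\tilde T_j^{-1}B_\epsilon(x)\subseteq B_{\sigma\epsilon}(\tilde T_j^{-1}x)$. The elementary product rule for oscillations then gives
\[
\operatorname{osc}(fg_j,B)\le \operatorname{osc}(f,B)\,\operatorname{ess\,sup}_B g_j+\operatorname{osc}(g_j,B)\,\operatorname{ess\,sup}_B|f|.
\]
The Hölder bound \eqref{equation: PE2} controls both factors involving $g_j$: it gives $\operatorname{osc}(g_j,B)\le c\,\epsilon^\alpha g_j(z)$ and $\sup_B g_j\le(1+c\epsilon^\alpha)g_j(z)$. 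Integrating over $x$ and changing variables $y=\tilde T_j^{-1}x$ absorbs the Jacobian $g_j$. Summing over $j$, the first part contributes at most
\[
(1+c\epsilon_0^\alpha)\int \operatorname{osc}(f,B_{\sigma\epsilon}(y))\,dy\le (1+c\epsilon_0^\alpha)\,\sigma^\alpha\epsilon^\alpha |f|_\alpha .
\]
The second part contributes $c\,\epsilon^\alpha\int \sup_{B_{\sigma\epsilon}(y)}|f|\,dy$. This integral is bounded by $\|f\|_{L^1}+\int\operatorname{osc}(f,B_{\sigma\epsilon}(y))\,dy\le \|f\|_{L^1}+\epsilon_0^\alpha|f|_\alpha$. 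Hence the second part is $O(\epsilon^\alpha(\|f\|_{L^1}+\epsilon_0^\alpha|f|_\alpha))$.

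\emph{Boundary regime.} This is the main obstacle. Here the oscillation of the $j$-th term is only bounded by $2\operatorname{ess\,sup}_{B_\epsilon(x)}|f g_j\circ \tilde T_j^{-1}|$. Pulled back by $\tilde T_j^{-1}$, this is the set $\tilde T_j^{-1}B_\epsilon(\partial\tilde T(U_j))$, whose measure is what $G$ controls. To turn a supremum into something integrable, I will use the pointwise inequality
\[
\operatorname{ess\,sup}_{B_\epsilon(y)}|f|\le |f(w)|+\operatorname{osc}\bigl(f,B_{\epsilon_0}(w)\bigr)\quad\text{for } w\in B_{(1-s)\epsilon_0}(y),
\]
valid because $B_\epsilon(y)\subseteq B_{\epsilon_0}(w)$ once $\epsilon\le s\epsilon_0$. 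Averaging it over $w\in B_{(1-s)\epsilon_0}(y)$ and applying Fubini, the boundary contribution is bounded by
\[
2\,G(\epsilon,\epsilon_0)\bigl(\epsilon_0^\alpha|f|_\alpha+ C\,\epsilon_0^{-q}\|f\|_{L^1}\bigr).
\]
The exact form of the denominator in \ref{condition: PE5} is what makes this Fubini step close. Balls with $s\epsilon_0<\epsilon\le\epsilon_0$ are handled trivially by a multiple of $\|f\|_{L^1}+|f|_\alpha$ evaluated at the smaller scale.

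\emph{Conclusion.} Dividing by $\epsilon^\alpha$ and taking the supremum over $\epsilon\le\epsilon_0$, I expect
\[
|Pf|_\alpha\le \bigl(\sigma^\alpha+2\sup_{\epsilon\le\epsilon_0}G(\epsilon)\epsilon^{-\alpha}\epsilon_0^\alpha+O(\epsilon_0^\alpha)\bigr)|f|_\alpha + D\|f\|_{L^1}.
\]
The coefficient of $|f|_\alpha$ is $\eta(\epsilon_0)+O(\epsilon_0^\alpha)$. By \ref{condition: PE5} it is strictly less than $1$ once $\epsilon_0$ is small enough. Since different $\epsilon_0$ give equivalent norms, this yields \eqref{equation: LY for V_alpha}, and in particular $Pf\in V_\alpha$.
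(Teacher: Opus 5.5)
Your route is Saussol's proof of his Lemma~4.1. The paper gives no argument of its own and only cites that lemma. Your interior regime, the Fubini step against the normalised quantity $G$, and the final choice of a small $\epsilon_0$ via $\sup_{\delta\le\epsilon_0}\eta(\delta)<1$ are fine.

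\textbf{The gap.} Your treatment of the boundary regime for $s\epsilon_0<\epsilon\le\epsilon_0$ does not work as written. A ``trivial'' bound there of the form $C(\|f\|_{L^1}+|f|_\alpha)$ puts the constant $C$ into the coefficient of $|f|_\alpha$, and that constant is not small. It is a covering constant depending on $q$ and $s$ if you compare the scales $\epsilon$ and $s\epsilon_0$. It is of order $\epsilon_0^{-q}$ if you pass through the embedding $V_\alpha\hookrightarrow L^\infty$. Nothing in (PE5) controls it, so your estimate does not give $\eta<1$.

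\textbf{The fix.} The split comes from two slips.
\begin{enumerate}
\item The $s$ in $G$ in (PE5) is not a free parameter. It is the contraction constant $\sigma$ of (PE4): Saussol calls it $s$, and the paper renamed it in (PE4) and in $\eta(\epsilon_0)$ but not in $G$.
\item In the boundary regime you did not use the contraction. Saussol's (PE1) also assumes $B_{\epsilon_0}(\tilde T(U_j))\subseteq \tilde T_j(V_j)$, so $y=\tilde T_j^{-1}x$ is defined for $x\in B_\epsilon(\partial\tilde T(U_j))$. Then (PE4) gives $\tilde T_j^{-1}\bigl(B_\epsilon(x)\cap\tilde T(U_j)\bigr)\subseteq B_{\sigma\epsilon}(y)$, exactly as in the interior case.
\end{enumerate}
Since $\sigma\epsilon+(1-\sigma)\epsilon_0\le\epsilon_0$, you get $B_{\sigma\epsilon}(y)\subseteq B_{\epsilon_0}(w)$ for every $w\in B_{(1-\sigma)\epsilon_0}(y)$ and every $\epsilon\le\epsilon_0$. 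Your averaging-plus-Fubini step then bounds the boundary contribution by $2(1+c\epsilon_0^\alpha)\,G(\epsilon,\epsilon_0)\bigl(\|f\|_{L^1}+\epsilon_0^\alpha|f|_\alpha\bigr)$, uniformly in $\epsilon\le\epsilon_0$. No factor $\epsilon_0^{-q}$ appears, because the average is normalised, and no splitting of the range of $\epsilon$ is needed. This is why $G$ is built on balls of radius $(1-\sigma)\epsilon_0$. With this correction your proof coincides with Saussol's.

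A minor point: if you use $\operatorname{ess\,inf}_B|f|$ rather than $\operatorname{ess\,sup}_B|f|$ in the product rule for oscillations, the Jacobian-oscillation term is directly $c\,\epsilon^\alpha\|f\|_{L^1}$. Your extra $O(\epsilon_0^\alpha)|f|_\alpha$ is harmless, though.
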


A product map  $\tilde T:[0,1]^q \to [0,1]^q$ such that each factor $T_i : [0,1] \to [0,1]$  is piecewise $C^{1+\alpha}$ with finitely many monotonicity intervals and uniformly expanding, i.e. $\inf |T_i'|>1$, clearly satisfies conditions~\ref{condition: PE1}--\ref{condition: PE4}.

Regarding condition \ref{condition: PE5}, \cite[Lemma 2.1]{saussol} shows that the condition is satisfied whenever
\[
\eta := \sigma^\alpha + \frac{4\sigma}{1-\sigma}\,
Y\,\frac{\gamma_{q-1}}{\gamma_q} < 1,
\]
provided that the dynamical partition is finite and its boundaries are contained in piecewise $C^1$ embedded compact codimension-one submanifolds. This applies in particular to the present product-map setting. Here, $\gamma_q$ denotes the volume of the unit ball in $\mathbb{R}^q$, and $Y$ is the maximal number of smooth boundary components of the domains of injectivity intersecting at a single point. In the present setting, we have $Y=2q$.

\begin{remark} In the case where all the factor maps $T_i$ are piecewise onto, it is not necessary to take the discontinuity into consideration, and we can directly set $\eta = \sigma^\alpha$, see for example the computations in \cite{ANT17}.
\end{remark}

According to \cite[Theorem 5.1]{saussol}, if the map $\tilde T$ satisfies conditions~\ref{condition: PE1}--\ref{condition: PE5}, its transfer operator is quasi-compact on $V_\alpha$, and the map $\tilde T$ admits a spectral decomposition. In order to guarantee that there exists a unique ergodic and mixing component, and then to guarantee condition \ref{B1}, we will use the following condition (adapted from \cite[Proposition 2.9]{annealed}) which imposes some sort of topological exactness, and also ensures that the unique invariant density if bounded uniformly away from 0.

\begin{proposition} \label{proposition: density_bounded_from_zero}
  Suppose that the partition $U_j$ associated to the map $\tilde T$ is finite. Assume that for any ball $B$ of $[0,1]^q$, there exists $n\in\mathbb{N}$ such that $\tilde T^nB=[0,1]^q$ (up to $\mathrm{Leb}$-measure zero), then there exists a unique absolutely continuous invariant probability (acip) $h\in V_\alpha$ for $\tilde T$. Furthermore, the support of $h$ is $[0,1]^q$ and $h$ is bounded away from zero.
\end{proposition}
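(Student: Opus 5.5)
Two things have to be shown: that there is a unique absolutely continuous invariant density $h\in V_\alpha$, and that $h$ is bounded below by a positive constant on all of $[0,1]^q$. The plan combines Saussol's spectral decomposition (\cite[Theorem 5.1]{saussol}) with the covering hypothesis $\tilde T^nB=[0,1]^q$.

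\emph{Step 1 (quasi-compactness and finitely many components).} By Lemma~\ref{lemma: Lasota_Yorke in Valpha} and the compact embedding $V_\alpha\hookrightarrow L^1$, the Hennion/Ionescu-Tulcea--Marinescu argument makes $P$ quasi-compact on $V_\alpha$ with spectral radius $1$. Its peripheral spectrum is finite, consists of roots of unity, and has finite-dimensional eigenspaces. Hence there are finitely many ergodic acips, with densities $h_1,\dots,h_m\in V_\alpha$, and each ergodic component splits into finitely many cyclically permuted mixing pieces. This is exactly \cite[Theorem 5.1]{saussol}.

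\emph{Step 2 (supports contain balls).} Let $g\ge 0$, $g\neq 0$, be any element of $V_\alpha$. The aim is to find a ball $B$ and $c>0$ with $g\ge c$ a.e.\ on $B$. Since $\int\operatorname{osc}(g,B_\epsilon(x))\,dx\le |g|_\alpha\epsilon^\alpha$, taking $\epsilon$ small gives a point $x$ where $\operatorname{osc}(g,B_\epsilon(x))$ is small and $g(x)>0$ is comparable to its average. Points of this kind exist because $\int g>0$ while the integrated oscillation tends to $0$. Consequently $\operatorname{ess\,inf}_{B_\epsilon(x)} g>0$.

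\emph{Step 3 (propagation and uniqueness).} Take $B$ from Step 2 and $n$ with $\tilde T^nB=[0,1]^q$. Then
\[
P^n g(y)\ \ge\ \sum_{\tilde T^n z=y,\ z\in B}\frac{g(z)}{|\operatorname{Jac}\tilde T^n(z)|}\ \ge\ c\,\inf |\operatorname{Jac}\tilde T^n|^{-1}>0
\]
for a.e.\ $y$. The Jacobian is bounded because the partition is finite and the branches are $C^1$ up to the boundary (by \ref{condition: PE2}, $\overline{U_j}\subseteq V_j$), and every $y$ has at least one preimage in $B$. Applying this to an invariant density $h_i$ shows that each $h_i$ is bounded away from zero on all of $[0,1]^q$. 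Two distinct ergodic acips would have to be mutually singular, hence have disjoint supports, which is impossible when both supports are the whole space. So $m=1$. Applying the same argument to the cyclic pieces of the unique component also rules out nontrivial periodicity, so $1$ is the only peripheral eigenvalue and it is simple. This gives \ref{B1}, with $\operatorname{supp}h=[0,1]^q$ and $h\ge c>0$.

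The main obstacle is Step 2: extracting a genuine lower bound on a ball from the quasi-Hölder seminorm, since $V_\alpha$ functions are only defined a.e.\ and are not continuous. The route I would take is the averaging argument above, using Lebesgue density and the $\epsilon^\alpha$ control of the oscillation integral, as in \cite[Proposition 2.9]{annealed}.
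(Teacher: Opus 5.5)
Your proposal is correct and follows essentially the same route as the paper: obtain a ball $B$ on which the invariant density is essentially bounded below (the paper simply cites \cite[Lemma 3.1]{saussol}, which your Step 2 reproves), then use $h=P^n h\ge c\,P^n\mathds{1}_B$ together with $\tilde T^nB=[0,1]^q$ and the Jacobian bound coming from the finite partition. Your explicit uniqueness and aperiodicity arguments, based on the full essential supports, spell out points that the paper's short proof leaves implicit.
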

\begin{proof}
  For $h$ an invariant density in $V_\alpha$, take $B$ to be the ball described in Lemma \cite[Lemma 3.1]{saussol}. Let $c$ be the infimum of $h|_B$. Then, for $\mathrm{Leb}$-almost every $x\in [0,1]^q$,
  \begin{equation*}
    h(x)=P_0^n h(x) \ge c P_0^n\left( \mathds{1}_B\right)\left(x\right)=c \sum_j \mathds{1}_{\tilde T_j^nB}\left(x\right) \left|\operatorname{det} D_{x}\left(\tilde T_j^n\right)^{-1}\right| \mathds{1}_{\tilde T^n_j U_{i,n}}(x).
  \end{equation*}
  Here, $U_{i,n}$ denotes the partition associated to $T^n$.
  Since $\tilde T^nB=[0,1]^q$, there must exist some $i$ such that $x\in \tilde T_j^n B$.
  This means that $\left(\tilde T_j^n\right)^{-1}\left(x\right) \in B \cap U_{i,n}$, so $x \in \tilde T_j^n (U_{i,n})$.
  Therefore,
  \begin{equation}
    h\ge c \inf_j \inf_x \left|\operatorname{det}D_x \left(\tilde T_j^n\right)^{-1}\right|>0,
  \end{equation}
  which concludes the proof.
\end{proof}

\section{Unclustered synchronisation profiles}
\label{sec:EI}

Throughout this section, we assume that $q=2$, so $\mathcal{M}=[0,1]^2$. The product map $\overline{T}$ has smooth branches denoted by $\bar T_j:[0,1]^2\to[0,1]^2$, and $T_{j,1}$ and $T_{j,2}$ denote the respective first and second factor maps.

In Section~\ref{sec:examples}, we will study examples where the assumptions of Theorem \ref{Thrm: Main result using spectral approach} are verified. Here, we will address first the issue of estimating the quantities $p_k$ given in \eqref{equation: pk}.

We assume that our dynamical system $\bar T:\left[0,1\right]^2 \to \left[0,1\right]^2$ satisfies all the conditions of Theorem \ref{Thrm: Main result using spectral approach}, with $\alpha=1$.
However, we recover the notation used to describe conditions \ref{condition: PE1}-\ref{condition: PE5}. For example, we write that $\bar T_j|_{\overline{U_j}}$ is a bijection between $\overline{U_j}$ and $[0,1]^2$.
The component maps are assumed to be piecewise $C^2$ unidimensional expanding. This implies that each of the factor maps has a $\operatorname{BV}\left(\left[0,1\right]\right)$ invariant probability absolutely continuous with respect to $\mathrm{Leb}$, which we call $h_1$ and $h_2$.
The invariant density of the product map $\bar T$ is  
\begin{equation*}
  h: \left[0,1\right]^2\to \left[0, \infty\right),\,  \left(x,y\right) \mapsto h_1\left(x\right) h_2\left(y\right).
\end{equation*}
The sets $\Delta_n$ can be chosen in the following way:
\begin{equation*}
  \Delta_n=\left\{(x,y) \in [0,1]^2: d\left(x,y\right)<\eta s/n \right\},
\end{equation*}
where
\begin{equation*}
  \eta=\frac{1}{2\int_0^1 h\left(x,x\right) dx}=\frac{1}{2\int_0^1 h_1\left(x\right)h_2\left(x\right) dx}.
\end{equation*}

\subsection{Preparatory results}
Let $D$ be the diagonal of $[0,1]^2$.
We need this simple but useful result:
\begin{lemma} \label{lemma: A_epsilon upper_bound}
  We have the following inclusions:
  \begin{itemize}
    \item $\Delta_n\subseteq \left\{(x,y): d\left(\left(x,y\right), D\right)<\eta s/n \right\}$
    \item $\bar T_j|_{\overline{U_j}}^{-1}(\Delta_n) \subseteq \left\{\left(x,y\right)\in [0,1]^2:d\left(\left(x,y\right), \bar T_j^{-1}D\right)<\eta s/n \right\}$, in case $\eta s/n\leq \varepsilon_0$.
  \end{itemize}
\end{lemma}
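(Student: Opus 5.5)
For the first inclusion I will exhibit a point of the diagonal $D$ close to each point of $\Delta_n$. If $(x,y)\in\Delta_n$, then $|x-y|<\eta s/n$. Take $(x,x)\in D$. Its Euclidean distance to $(x,y)$ is $|x-y|<\eta s/n$, so $d((x,y),D)\le d((x,y),(x,x))<\eta s/n$. One could also project orthogonally onto $D$, at $((x+y)/2,(x+y)/2)$, which even gives the sharper bound $|x-y|/\sqrt2$. The cruder choice already suffices.

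For the second inclusion I will push the first one through the inverse branch, using the contraction property \ref{condition: PE4}. Let $(x,y)\in \bar T_j|_{\overline{U_j}}^{-1}(\Delta_n)$, so that $u:=\bar T_j(x,y)\in\Delta_n\cap \bar T_j(\overline{U_j})$. Since $\bar T_j|_{\overline{U_j}}$ is a bijection onto $[0,1]^2$, the whole diagonal $D$ lies in the image of the branch. By the first part there is a point $v\in D$ with $d(u,v)<\eta s/n\le \varepsilon_0$; concretely I take $v=(u_1,u_1)$, which lies in $[0,1]^2$.

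Both $u$ and $v$ belong to $\bar T(V_j)$ and satisfy $d(u,v)\le\varepsilon_0$, so \ref{condition: PE4} applies and gives
\[
d\bigl(\bar T_j^{-1}u,\bar T_j^{-1}v\bigr)\le \sigma\, d(u,v)<\eta s/n .
\]
Here $\bar T_j^{-1}u=(x,y)$ and $\bar T_j^{-1}v\in \bar T_j^{-1}D$. Therefore $d((x,y),\bar T_j^{-1}D)<\eta s/n$, which is the claim.

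The only delicate point is checking that \ref{condition: PE4} really applies. This requires the nearby diagonal point $v$ to lie in the image of the branch, which is guaranteed by the assumption that each branch maps onto $[0,1]^2$. It also requires the distance $d(u,v)$ to be at most $\varepsilon_0$, which is exactly why the hypothesis $\eta s/n\le\varepsilon_0$ is imposed. Once those two conditions hold, the contraction $\sigma<1$ of the inverse branch does the rest.
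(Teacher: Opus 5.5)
Your proof is correct and follows the paper's argument. The first inclusion uses the diagonal point $(x,x)$, and the second pulls a nearby diagonal point back through the inverse branch using the contraction in (PE4). You are more explicit than the paper on two points it leaves implicit: the diagonal point must lie in the image of the branch (by surjectivity onto $[0,1]^2$), and the hypothesis $\eta s/n\le\varepsilon_0$ is what makes (PE4) applicable.
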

\begin{proof}
  To prove the first inclusion, we note that, since $d$ is the Euclidean distance,
  \begin{equation*}
    \left(x,y\right)\in \Delta_n \iff d\left(x,y\right)<\eta s/n \implies d\left(\left(x,x\right), \left(x,y\right)\right)<\eta s/n.
  \end{equation*}
  Regarding the second inclusion, if $\bar T_j\left(x,y\right)\in \Delta_n$ then, by the first inclusion, there exists $(w,w)\in D$ such that $d\left(\bar T_j\left(x,y\right), \left(w,w\right)\right)<\eta s/n$. But then
  \begin{align*}
    d\left(\left(x,y\right), \bar T_j^{-1}\left(w,w\right)\right)=d\left(\bar T_j^{-1}\bar T_j\left(x,y\right), \bar T_j^{-1}\left(w,w\right)\right)\\ \leq \sigma\, d\left(\bar T_j\left(x,y\right), \left(w,w\right)\right)<\sigma\cdot\eta s/n<\eta s/n,
  \end{align*}
  which concludes the proof.
\end{proof}

Under the conditions described above, we can prove the following result:
\begin{proposition}
  If $\bar T_j|_{\overline{U_j}}^{-1}D \cap D=\emptyset$, then there exists $n_0$ such that for all $n\geq n_0$, the intersection $\bar T_j|_{\overline{U_j}}^{-1} \Delta_n \cap \Delta_n$ is empty.
\end{proposition}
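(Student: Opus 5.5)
The plan is a compactness argument combined with the neighbourhood inclusions of Lemma~\ref{lemma: A_epsilon upper_bound}. Write $\varepsilon_n:=\eta s/n$, so $\varepsilon_n\to 0$. By the first inclusion of Lemma~\ref{lemma: A_epsilon upper_bound}, $\Delta_n$ lies in the $\varepsilon_n$-neighbourhood of $D$. By the second inclusion, valid once $\varepsilon_n\le\varepsilon_0$, the set $\bar T_j|_{\overline{U_j}}^{-1}\Delta_n$ lies in the $\varepsilon_n$-neighbourhood of $\bar T_j|_{\overline{U_j}}^{-1}D$. It therefore suffices to show that these two neighbourhoods are disjoint for all large $n$. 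This reduces to a positive lower bound on the distance between $D$ and $K_j:=\bar T_j|_{\overline{U_j}}^{-1}D$.

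\textbf{Step 1: $K_j$ is compact.} The set $\overline{U_j}$ is compact, and $\bar T_j|_{\overline{U_j}}$ is a continuous bijection onto $[0,1]^2$. This comes from \ref{condition: PE2} (the branch $\bar T_j$ is $C^1$ on $V_j\supseteq\overline{U_j}$) and from the standing assumption of this section that $\bar T_j|_{\overline{U_j}}$ is a bijection between $\overline{U_j}$ and $[0,1]^2$. Hence $K_j=\overline{U_j}\cap \bar T_j^{-1}(D)$ is the preimage of the closed set $D$ under a continuous map on a compact set, so it is compact. The diagonal $D$ is also compact.

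\textbf{Step 2: positive separation.} By hypothesis $K_j\cap D=\emptyset$. Two disjoint compact sets lie at positive distance, so
\[
\rho:=d(K_j,D)=\min_{a\in K_j,\,b\in D} d(a,b)>0 .
\]
If $K_j$ happens to be empty, the conclusion is trivial, since then $\bar T_j|_{\overline{U_j}}^{-1}\Delta_n\subseteq \bar T_j|_{\overline{U_j}}^{-1}[0,1]^2$ must still be handled. The bijectivity onto $[0,1]^2$, however, makes $K_j$ nonempty whenever $D\neq\emptyset$, so this case does not arise.

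\textbf{Step 3: conclusion.} Choose $n_0$ such that $\varepsilon_n\le\varepsilon_0$ and $2\varepsilon_n<\rho$ for all $n\ge n_0$. Suppose $z\in \bar T_j|_{\overline{U_j}}^{-1}\Delta_n\cap\Delta_n$. Then there exist $a\in K_j$ and $b\in D$ with $d(z,a)<\varepsilon_n$ and $d(z,b)<\varepsilon_n$. The triangle inequality gives $d(a,b)<2\varepsilon_n<\rho$, which contradicts the definition of $\rho$. So the intersection is empty.

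The main point requiring care is Step~1. One must confirm that the branch restricted to the closure $\overline{U_j}$ is genuinely continuous, so that $K_j$ is closed. One must also use the closure $\overline{U_j}$ rather than the open set $U_j$; with the open set, the preimage of $D$ could accumulate on $D$ at the boundary without intersecting it. The regularity in \ref{condition: PE2}, which gives $C^1$ on the larger open set $V_j$, is exactly what makes this work. The metric estimate itself is routine given Lemma~\ref{lemma: A_epsilon upper_bound}.
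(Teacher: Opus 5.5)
Your proof is correct and follows the paper's argument. Both show that $\Delta_n$ and $\bar T_j|_{\overline{U_j}}^{-1}\Delta_n$ are shrinking neighbourhoods of the disjoint compact sets $D$ and $\bar T_j|_{\overline{U_j}}^{-1}D$, which therefore lie at positive distance; you simply make explicit the compactness of $\bar T_j|_{\overline{U_j}}^{-1}D$, the use of Lemma~\ref{lemma: A_epsilon upper_bound}, and the triangle-inequality step that the paper leaves implicit. Your aside about $K_j=\emptyset$ is muddled but harmless, since bijectivity excludes that case and an empty $K_j$ would make the preimage empty anyway.
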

\begin{proof}
  We first observe that the sets $\Delta_n$ and $\bar T_j|_{\overline{U_j}}^{-1} \Delta_n$ are shrinking neighborhoods of $D$ and $\bar T_j|_{\overline{U_j}}^{-1}D$, respectively. Since these sets are compact and disjoint, the distance between them is strictly positive, so we can find $n_0$ such that for all $n\geq n_0$, the intersection $\bar T_j|_{\overline{U_j}}^{-1} \Delta_n \cap \Delta_n$ is empty.
\end{proof}

\subsection{Intersecting points with distinct derivatives}

Before going through more results, it will be useful to study the following inequality of real numbers
\begin{equation}
\label{eq:inequality}
  \left| az+b\frac{z^2}{2}\right|\ge \frac{\left|za\right|}{2}.
\end{equation}
If $ab=0$, then every real number satisfies the inequality. On the other hand, if $ab>0$, the set of solutions of the inequality is $\left(-\infty, -3a/b \right] \cup \left[-a/b, \infty\right)$. In the case of $ab<0$, this set is $\left(-\infty, -a/b\right] \cup \left[-3a/b, \infty\right)$.
Hence, the inequality is satisfied for all $z\in \mathbb{R}$, such that $|z|\leq \left|\frac{a}{b} \right|$.
Another important remark is that if we replace $b$ by $b'$ in the inequality, where $|b'|\leq|b|$, then the set $\left\{z\in \mathbb{R}: |z|\leq \left|\frac{a}{b}\right| \right\}$ is still contained in the set of solutions of the inequality.

Recall that the factors of $\bar T_j: V_j\to \mathbb{R}^2$ are denoted by $T_{j,1}$ and $T_{j,2}$. Since the domains of the maps $\bar T_j$ are rectangles, it is not difficult to see what the domains of the maps $T_{j,1}$ and $T_{j,2}$ are intervals.
We now have all the ingredients to prove a lemma, which, in loose terms, states that if there is an intersection of $D$ with $\bar T^{-1}D$ at the point $(x_0,x_0)$, with $T_{j,1}'\left(x_0\right) \neq T_{j,2}'\left(x_0\right)$, then $\bar T^{-1}D$ ``moves away from'' $D$ sufficiently fast.

\begin{lemma} \label{lemma: intersection_diff_derivative}
  Assume $\bar T$ is as described in the beginning of this section. Assume that $(x_0, x_0)\in \overline{U_j}$ is such that $T_{j,1}(x_0)=T_{j,2}(x_0)$, but $T_{j,1}'\left(x_0\right) \neq T_{j,2}'\left(x_0\right)$. Define
  \begin{itemize}
    \item $b_j:=\sup_{\left(x,y\right)\in [0,1]^2} \left|\left[\left(T_{j,1}^{-1}\right)'\left(x\right)\right]^3T_{j,1}''\left(T_{j,1}^{-1}\left(x\right)\right)-\left[\left(T_{j,2}^{-1}\right)'\left(y\right)\right]^3T_{j,2}''\left(T_{j,2}^{-1}\left(y\right)\right) \right|$
    \item $c_j:= \frac{\left|\left(T_{j,1}^{-1}\right)'\left(T_{j,1}(x_0)\right)-\left(T_{j,2}^{-1}\right)'\left(T_{j,2}(x_0)\right) \right|}{b_j}$.
  \end{itemize}
  Then, for all $t$ such that $|t|\leq c_j$ and $(T_{j,1}(x_0)+t, T_{j,2}(x_0)+t)\in \bar T_j(\overline{U_j})$, we have that
  \begin{equation*}
    \left|T_{j,1}^{-1}\left(T_{j,1}(x_0)+t\right)- T_{j,2}^{-1}\left(T_{j,2}(x_0)+t\right)\right| \ge\frac{\left|t\right|}{2}\left|\left(T_{j,1}^{-1}\right)'\left(T_{j,1}(x_0)\right)- \left(T_{j,2}^{-1}\right)'\left(T_{j,2}(x_0)\right) \right|.
  \end{equation*}
\end{lemma}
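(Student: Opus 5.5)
The plan is a second-order Taylor expansion with Lagrange remainder, after which the inequality \eqref{eq:inequality} finishes the argument. Set $y_0:=T_{j,1}(x_0)=T_{j,2}(x_0)$ and write $g_i:=T_{j,i}^{-1}$ for $i=1,2$. These are $C^2$ on the relevant intervals, since the factors are piecewise $C^2$ with nonvanishing derivative. Define
\[
F(t):=g_1(y_0+t)-g_2(y_0+t),
\]
for those $t$ with $(y_0+t,y_0+t)\in\bar T_j(\overline{U_j})$. Because the image of the rectangle $\overline{U_j}$ is a rectangle, this set of $t$ is an interval containing $0$. Also $F(0)=g_1(y_0)-g_2(y_0)=x_0-x_0=0$.

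Next I compute the second derivatives of the inverse branches. Differentiating $g_i'(y)=1/T_{j,i}'(g_i(y))$ gives
\[
g_i''(y)=-\frac{T_{j,i}''(g_i(y))\,g_i'(y)}{(T_{j,i}'(g_i(y)))^2}=-\bigl[g_i'(y)\bigr]^3\,T_{j,i}''(g_i(y)).
\]
Hence $|F''(\xi)|=\bigl|[g_1'(\xi)]^3T_{j,1}''(g_1(\xi))-[g_2'(\xi)]^3T_{j,2}''(g_2(\xi))\bigr|$, which is at most $b_j$. The supremum defining $b_j$ is over independent $(x,y)$, so taking $x=y=\xi$ is certainly allowed.

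By Taylor's theorem with Lagrange remainder, $F(t)=at+\tfrac{1}{2}b'\,t^2$, where $a:=F'(0)=g_1'(y_0)-g_2'(y_0)$ and $b':=F''(\xi)$ for some $\xi$ between $0$ and $t$. The hypothesis $T_{j,1}'(x_0)\neq T_{j,2}'(x_0)$ gives $a\neq 0$, and $|b'|\le b_j$. Apply the analysis of \eqref{eq:inequality} with $b=b_j$. Solutions include every $z$ with $|z|\le|a/b_j|=c_j$, and by the remark following \eqref{eq:inequality}, replacing $b_j$ by $b'$ with $|b'|\le b_j$ keeps this set among the solutions. Therefore $|F(t)|\ge |ta|/2$ for $|t|\le c_j$, which is exactly the claim.

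The only delicate point is that $b'$ depends on $t$ through $\xi$. This is harmless, because the solution set $\{|z|\le |a/b_j|\}$ is uniform over all $|b'|\le b_j$. If $b_j=0$, then $F$ is linear and the bound holds trivially, with $c_j=\infty$ read as no restriction. Beyond that, the work is routine: checking that $y_0+t$ stays in the common domain of $g_1,g_2$, which is what the hypothesis $(y_0+t,y_0+t)\in\bar T_j(\overline{U_j})$ provides.
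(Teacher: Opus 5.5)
Your proposal is correct and is essentially the paper's proof: a second-order Taylor expansion, the bound on the quadratic coefficient by $b_j$ obtained from $g_i''=-[g_i']^3\,T_{j,i}''\circ g_i$, and then the solution set of \eqref{eq:inequality} together with the remark on replacing $b$ by a smaller $|b'|$. The only cosmetic difference is that you use a single Lagrange remainder for the difference $F$, so you only need the diagonal $x=y$ of the supremum defining $b_j$; the argument of $F''$ there should read $y_0+\xi$, not $\xi$. The paper instead expands each inverse branch with its own intermediate point, which is why it defines $b_j$ as a supremum over independent $(x,y)$.
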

\begin{proof}
  By Taylor expansion, we have
  \begin{multline*}
    \left|T_{j,1}^{-1}\left(T_{j,1}(x_0)+t\right)-T_{j,2}^{-1}\left(T_{j,2}(x_0)+t\right) \right|=\\
    \Bigg|\left[\left(T_{j,1}^{-1}\right)'\left(T_{j,1}(x_0)\right)-\left(T_{j,2}^{-1}\right)'\left(T_{j,2}(x_0)\right)\right]t\\+ \left[\left(T_{j,1}^{-1}\right)''\left(T_{j,1}(x_0)+r_1\right)-\left(T_{j,2}^{-1}\right)''\left(T_{j,2}(x_0)+r_2\right)\right]\frac{t^2}{2} \Bigg|.
  \end{multline*}
  The result follows by applying the observation above about the inequality \eqref{eq:inequality}, with
  \begin{equation*}
    a=\left(T_{j,1}^{-1}\right)'\left(T_{j,1}(x_0)\right)-\left(T_{j,2}^{-1}\right)'\left(T_{j,2}(x_0)\right)
  \end{equation*}
  and
  \begin{equation*}
    b=\sup_{\left(x,y\right)\in [0,1]^2} \left|\left[\left(T_{j,1}^{-1}\right)'\left(x\right)\right]^3T_{j,1}''\left(T_{j,1}^{-1}\left(x\right)\right)-\left[\left(T_{j,2}^{-1}\right)'\left(y\right)\right]^3T_{j,2}''\left(T_{j,2}^{-1}\left(y\right)\right) \right|.
  \end{equation*}
  The upper bound $b$ is obtained by differentiating twice the expression $f\left(f^{-1}\left(x\right)\right)=x$.
\end{proof}

As a consequence we obtain the following corollary.
\begin{corollary} \label{corollary: intersection_diff_derivative}
 Let 
 $$\textstyle 0<\varepsilon<\frac{\left|\left(T_{j,1}^{-1}\right)'\left(T_{j,1}(x_0)\right)-\left(T_{j,2}^{-1}\right)'\left(T_{j,2}(x_0)\right) \right|^2}{2b_j}\quad\text{and}\quad\delta=\frac{2\varepsilon}{\left|\left(T_{j,1}^{-1}\right)'\left(T_{j,1}(x_0)\right)-\left(T_{j,2}^{-1}\right)'\left(T_{j,2}(x_0)\right) \right|}.$$ Then, for all $t\in \left(\delta, c_j\right) \cup \left(-c_j, -\delta\right)$, we have
  \begin{equation*}
    \left|T_{j,1}^{-1}\left(T_{j,1}(x_0)+t\right)- T_{j,2}^{-1}\left(T_{j,2}(x_0)+t\right)\right|\ge\varepsilon.
  \end{equation*}
\end{corollary}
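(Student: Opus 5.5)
The corollary follows directly from Lemma~\ref{lemma: intersection_diff_derivative}. To lighten notation, write
\[
a:=\left(T_{j,1}^{-1}\right)'\left(T_{j,1}(x_0)\right)-\left(T_{j,2}^{-1}\right)'\left(T_{j,2}(x_0)\right),
\]
so that $a\neq 0$ and $c_j=|a|/b_j$. In this notation $\varepsilon<|a|^2/(2b_j)$ and $\delta=2\varepsilon/|a|$.

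The first step is to check that the interval $(\delta,c_j)$ is nonempty, so that the statement is not vacuous. This is equivalent to $2\varepsilon/|a|<|a|/b_j$, that is, $\varepsilon<|a|^2/(2b_j)$, which is exactly the upper bound imposed on $\varepsilon$. The degenerate case $b_j=0$ can also occur. Then the inequality \eqref{eq:inequality} holds for every $z$, so $c_j$ should be read as $+\infty$, and the bound on $\varepsilon$ is void.

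The second step takes $t$ with $\delta<|t|<c_j$ and applies Lemma~\ref{lemma: intersection_diff_derivative}. This requires the implicit hypothesis of the lemma that $(T_{j,1}(x_0)+t,T_{j,2}(x_0)+t)\in\bar T_j(\overline{U_j})$, which I take to be understood in the corollary as well. The lemma gives
\[
\left|T_{j,1}^{-1}\left(T_{j,1}(x_0)+t\right)-T_{j,2}^{-1}\left(T_{j,2}(x_0)+t\right)\right|\ge \frac{|t|\,|a|}{2}.
\]
Since $|t|>\delta=2\varepsilon/|a|$, the right-hand side is strictly greater than $\varepsilon$. This is the desired bound, in fact with strict inequality.

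There is no real obstacle here. The only points needing care are the nonemptiness of the range of $t$, which the constraint on $\varepsilon$ is designed to guarantee, and the domain condition inherited from the lemma.
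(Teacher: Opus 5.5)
Your proposal is correct and takes the same route as the paper, which gives no separate proof and presents the corollary as an immediate consequence of Lemma~\ref{lemma: intersection_diff_derivative}: for $\delta<|t|<c_j$ the lemma gives the lower bound $|t|\,|a|/2>\delta|a|/2=\varepsilon$. Your side remarks are also accurate: non-emptiness of $(\delta,c_j)$ is equivalent to the bound on $\varepsilon$, and the domain condition $(T_{j,1}(x_0)+t,T_{j,2}(x_0)+t)\in\bar T_j(\overline{U_j})$ is inherited from the lemma even though the corollary's statement leaves it implicit.
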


\subsection{Isolated intersecting points have negligible contribution for the computation of $\theta$}

Now we prove that under the conditions of Lemma \ref{lemma: intersection_diff_derivative}, $(x_0, x_0)$ is an isolated point of $D \cap \bar T^{-1}D$. Moreover, its contribution for the computation of  the extremal index is negligible.
From now on, by $\bar T_j^{-1}$, we mean $\bar T_j^{-1}|_{\overline{U_j}}$.

\begin{proposition} \label{proposition: isolated_points}
  Suppose that $\bar T$ satisfies the properties described in the beginning of this section and that, for some $j$, $(x_0,x_0)\in \overline{U_j} \cap D \cap \bar T^{-1}_j (D)$ is a point for which $T_{j,1}'\left(x_0\right)\neq T_{j,2}'\left(x_0\right)$.
  Then there exists a neighbourhood $V$ of $(x_0, x_0)$ such that
  \begin{equation*}
    V\cap D \cap \bar T_j^{-1} D=\left\{\left(x_0, x_0\right)\right\}
 \quad  \text{and}\quad 
    V\cap D \cap \bar T^{-1} D\subseteq\left\{\left(x_0, x_0\right)\right\}.
  \end{equation*}
  Furthermore,
  \begin{equation*}
    \lim_{n\to\infty} \frac{\mu\left(\Delta_n \cap \bar T^{-1} \Delta_n \cap V\right)}{\mu\left(\Delta_n\right)}=0.
  \end{equation*}
\end{proposition}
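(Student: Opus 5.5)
The plan is to work on the branch $\bar T_j$ near $(x_0,x_0)$, using Lemma~\ref{lemma: intersection_diff_derivative} and Corollary~\ref{corollary: intersection_diff_derivative}. Write $y_0:=T_{j,1}(x_0)=T_{j,2}(x_0)$ and
\[
a:=\bigl|(T_{j,1}^{-1})'(y_0)-(T_{j,2}^{-1})'(y_0)\bigr|.
\]
Since $T_{j,1}'(x_0)\neq T_{j,2}'(x_0)$, we have $a>0$.

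\textbf{Isolation.} The set $\bar T_j^{-1}D$ near $(x_0,x_0)$ is the curve
\[
\gamma(t)=\bigl(T_{j,1}^{-1}(y_0+t),\,T_{j,2}^{-1}(y_0+t)\bigr).
\]
By Lemma~\ref{lemma: intersection_diff_derivative}, for $0<|t|\le c_j$ the two coordinates of $\gamma(t)$ differ by at least $a|t|/2>0$, so $\gamma(t)\notin D$. Take $V$ to be a small open ball around $(x_0,x_0)$ contained in $\bar T_j(\{|t|\le c_j\})$ pulled back. More precisely, since $\bar T_j$ is a diffeomorphism, choose $V$ such that every point of $V\cap\bar T_j^{-1}D$ equals $\gamma(t)$ with $|t|<c_j$. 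This gives $V\cap D\cap\bar T_j^{-1}D=\{(x_0,x_0)\}$.

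For the full preimage $\bar T^{-1}D$, the other branches contribute only through $\overline{U_{j'}}$. Shrinking $V$ so that it meets only those closures $\overline{U_{j'}}$ containing $(x_0,x_0)$, I would take $V\subseteq\overline{U_j}$ if $(x_0,x_0)$ is interior, or else handle finitely many adjacent branches. Here I expect a gap: the statement seems to intend $V\cap U_j$, so I would interpret $\bar T^{-1}$ on $V$ as $\bar T_j^{-1}$, possibly after restricting $V$ to $\overline{U_j}$. This gives the inclusion in $\{(x_0,x_0)\}$.

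\textbf{Measure estimate.} Let $\rho_n=\eta s/n$. A point $p\in V\cap\Delta_n\cap\bar T^{-1}\Delta_n$ satisfies two conditions. First, $d(p,D)<\rho_n$ by Lemma~\ref{lemma: A_epsilon upper_bound}. Second, $\bar T_j p\in\Delta_n$, so $\bar T_j p=(y_0+t,\,y_0+t+e)$ with $|e|<\rho_n$. Then
\[
p=\bigl(T_{j,1}^{-1}(y_0+t),\,T_{j,2}^{-1}(y_0+t+e)\bigr),
\]
and by the mean value theorem the second coordinate is within $C\rho_n$ of $T_{j,2}^{-1}(y_0+t)$. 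Lemma~\ref{lemma: intersection_diff_derivative} then forces
\[
\frac{a|t|}{2}\le |x_1-x_2|+C\rho_n<(1+C)\rho_n,
\]
so $|t|\le C'\rho_n$.

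Consequently the set lies in $\bar T_j^{-1}$ of a square of side $O(\rho_n)$ around $(y_0,y_0)$. Its Lebesgue measure is therefore $O(\rho_n^2)$, because $\bar T_j^{-1}$ has bounded Jacobian. Since $h$ is bounded, $\mu$ of the set is $O(n^{-2})$. On the other hand, $\mu(\Delta_n)\sim s/n$ and is bounded below by $c\rho_n$ since $h\ge c$. The ratio is therefore $O(1/n)\to0$.

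\textbf{Main obstacle.} I expect the main obstacle to be the bookkeeping near branch boundaries, namely making $V$ small enough that only branch $j$ is relevant, together with ensuring that $\rho_n\le\varepsilon_0$ and $C'\rho_n\le c_j$. Both constraints hold for large $n$.
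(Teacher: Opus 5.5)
Your argument for branch $j$ is correct, and it is the paper's idea in a leaner form. You write $\bar T_j p=(y_0+t,\,y_0+t+e)$ and compare $p$ with $\gamma(t)$ through the mean value theorem in the second coordinate, which lets you apply Lemma~\ref{lemma: intersection_diff_derivative} directly. The paper instead goes through a nearest diagonal point $(z,z)$, Corollary~\ref{corollary: intersection_diff_derivative}, and a separate treatment of $\bar T_j^{-1}\bigl(B_{c_j}\setminus B_{c_j/2}\bigr)$ via $d_j$. One small fix: to invoke the lemma you need $|t|<c_j$ for every $p\in V$, not only for points of the curve. So choose $V$ with $\bar T_jV\subseteq B_{c_j}(y_0,y_0)$; then no condition $C'\rho_n\le c_j$ is needed. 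This proves the proposition when $(x_0,x_0)\in U_j$.

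The gap is the case $(x_0,x_0)\in\partial U_j$ with $0<x_0<1$. There the step you call bookkeeping cannot be carried out. Every neighbourhood of $(x_0,x_0)$ in $[0,1]^2$ meets adjacent branches $U_i$, on which $\bar T=\bar T_i$. So neither the inclusion for $\bar T^{-1}D$ nor the bound on $\mu(\Delta_n\cap\bar T^{-1}\Delta_n\cap V)$ follows from the branch-$j$ computation. Replacing $V$ by $V\cap\overline{U_j}$ proves a different statement, and one that is useless downstream. In Corollary~\ref{corollary: finite number of T-jD cap D} the $V_k$ must be open in $[0,1]^2$ so that $W=(\bigcup_k V_k)^c$ is compact and disjoint from $D\cap\bar T^{-1}D$. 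The paper instead splits over all branches whose closure contains the point:
\[
\mu(\Delta_n\cap\bar T^{-1}\Delta_n\cap V)\le\sum_{i:\,(x_0,x_0)\in\overline{U_i}}\mu(\Delta_n\cap\bar T_i^{-1}\Delta_n\cap V).
\]
If $(x_0,x_0)\notin\bar T_i^{-1}D$, then $r_i:=d\bigl((x_0,x_0),\bar T_i^{-1}D\bigr)>0$ by compactness. Taking $V\subseteq B_{r_i/2}(x_0,x_0)$, the second inclusion of Lemma~\ref{lemma: A_epsilon upper_bound} makes the $i$-th term vanish for large $n$. If $(x_0,x_0)\in\bar T_i^{-1}D$, you rerun your argument on branch $i$.

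That last step needs $T_{i,1}'(x_0)\neq T_{i,2}'(x_0)$ for the one-sided derivatives of branch $i$. The paper uses this tacitly; it is what the hypothesis of Corollary~\ref{corollary: finite number of T-jD cap D} provides. It does not follow from the assumption on branch $j$, and without it the statement as written fails. Take the partially matching maps of Section~\ref{sec:compatibility-EI} with $\mathbf a=\mathbf b=2$ and $x_0=1/2$:
\begin{itemize}
\item the branch $(1/2,1)\times(1/2,3/4)$ acts as $(2x-1,4y-2)$, so the hypothesis holds;
\item the branch $(0,1/2)^2$ acts as $(2x,2y)$ and maps the diagonal into itself.
\end{itemize}
Hence every neighbourhood $V$ contains a diagonal segment lying in $D\cap\bar T^{-1}D$, and $\mu(\Delta_n\cap\bar T^{-1}\Delta_n\cap V)\asymp 1/n$. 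Your suspicion was therefore justified. The repair, however, is to assume distinct derivatives on every branch $i$ with $(x_0,x_0)\in\overline{U_i}\cap\bar T_i^{-1}D$, not to shrink $V$ into $\overline{U_j}$.
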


\begin{remark}
 The inclusion $V\cap D \cap \bar T^{-1} D\subseteq\left\{\left(x_0, x_0\right)\right\}$ may not be an equality because $(x_0, x_0)$ may belong to $\partial U_j$, where the map can be defined arbitrarily.
\end{remark}

\begin{proof}
  First, suppose that $(x_0, x_0)\in U_j$. We claim that $V=T_j^{-1}\left(B_{c_j}\left(T_{j,1}x_0, T_{j,2} x_0\right)\right)$ is the desired neighborhood. Note that this neighborhood is not necessarily an open set of $[0,1]^2$, but it is on the induced topology of $\overline{U_j}$.
  To prove this claim, we choose $(x,y)\in T_j^{-1}D\cap V\setminus\left\{\left(x_0, x_0\right)\right\}$.
  We know that there exists $t$, where $0<|t|<c_j$, such that $(x,y)=\left(T_{j,1}^{-1}\left(T_{j,1}x_0+t\right),T_{j,2}^{-1}\left(T_{j,2}x_0+t\right)\right)$. Lemma \ref{lemma: intersection_diff_derivative} tells us $(x,y)\not \in D$.
  If $(x_0, x_0)\in\partial U_j$, one should take all $i$ for which $(x_0, x_0)\in D \cap T_j^{-1}D$ and the neighborhood $V_j$ of that point as above. For the $i$ such that $(x_0, x_0) \not \in T_j^{-1} D$, we can take the ball $B_{r_j}$ centered at $(x_0, x_0)$ of radius $r_j:=d\left(\left(x_0, x_0\right), T_j^{-1}D \right)/2$.
  The union of all of these gives a neighborhood of $(x_0, x_0)$ with the desired conditions.
  Of course, we can ensure these neighborhoods are open by taking a ball contained in $V$.
  One important thing to mention is that
  \begin{equation*}
    V\subseteq \bigcup_{j: (x_0,x_0)\in \overline{U}_j} \overline{U_j}.
  \end{equation*}
  For each $j$ such that $\left(x_0, x_0\right)\in \overline{U_j}$, define the following quantities:
  \begin{equation}
    \Gamma_j= \frac{\left|\left(T_{j,1}^{-1}\right)'\left(T_{j,1}x_0\right)-\left(T_{j,2}^{-1}\right)'\left(T_{j,2}x_0\right) \right|^2}{2b_j}
  \end{equation}
  \begin{equation}
    \Lambda_j=\frac{\epsilon_0 \left|\left(T_{j,1}^{-1}\right)'\left(T_{j,1}x_0\right)-\left(T_{j,2}^{-1}\right)'\left(T_{j,2}x_0\right) \right|}{2\sqrt{2}}
  \end{equation}
  we take $\varepsilon_j$ as follows:
  \begin{equation}
    \varepsilon_j<\begin{cases}
      \min\left\{3\epsilon_0, 3r_j \right\} & (x_0,x_0)\not\in T_j^{-1} D \\
      \min\left\{c_j, \Gamma_j, \frac{3d_j}{2}, \Lambda_j \right\}, & (x_0,x_0) \in  T_j^{-1} D
    \end{cases}
  \end{equation}
  where $d_j:=d\left(T_j^{-1} D, D\cap T_j^{-1}\left( \overline{B_{c_j}\left(T_{j,1}x_0, T_{j,2}x_0\right)}\setminus B_{c_j/2}\left(T_{j,1}x_0, T_{j,2}x_0\right)\right) \right)$, which is greater than $0$, since both sets are compact and do not intersect each other.
  Let $\varepsilon=\min_{i: (x_0, x_0)\in \overline{U_j}}\varepsilon_j$.

  Suppose that $(x,y)\in U_j$ where $i$ is such that $(x_0, x_0)\in T_j^{-1}D$.
  Additionally, suppose $(x,y)\in \Delta_n \cap T^{-1}_j \Delta_n\cap T_j^{-1}\left[B_{c_j/2}\left(T_{j,1}x_0, T_{j,2}x_0\right)\right]$ for sufficiently large $n$ (i.e., $1/n \leq \varepsilon/(3\eta s)$).
  We know that $\left(T_{j,1}x, T_{j,2}y\right) \in \Delta_n$, which implies, by Lemma \ref{lemma: A_epsilon upper_bound}, that $d\left(\left(T_{j,1}x, T_{j,2}y\right), D\right)<s\varepsilon/3$.
  Let $(z,z)\in D$ be such that
  \begin{equation*}
    d\left(\left(T_{j,1}x, T_{j,2}y\right), \left(z,z\right)\right)<s\varepsilon/3.
  \end{equation*}
  Then,
  \begin{align*}
    d\left(\left(T_{j,1}x_0, T_{j,2}x_0\right), \left(z,z\right)\right)\leq d\left(\left(T_{j,1}x_0, T_{j,2}x_0\right), \left(T_{j,1}x,T_{j,2}y\right)\right)+d\left(\left(T_{j,1}x, T_{j,2}y\right), \left(z,z\right)\right)\\\leq \frac{c_j}{2}+\frac{s\varepsilon}{3}\leq \frac{c_j}{2}+\frac{\varepsilon}{3}<c_j.
  \end{align*}
  This means that $(z,z)\in B_{c_j}\left(T_{j,1}x_0, T_{j,2}x_0\right)$.
  Let $(u,v)=\bar T_{j}^{-1}\left(z,z\right)$. Since $$(u,v)\in \bar T^{-1}_j B_{c_j}\left(T_{j,1}x_0, T_{j,2}x_0\right),$$
  there exists $|t|<c_j$ such that $$(u,v)=\left(T_{j,1}^{-1}\left(T_{j,1}x_0+t\right), T_{j,2}^{-1}\left(T_{j,2}x_0+t\right)\right).$$
  Note that
  \begin{equation*}
    d\left(u,v\right)\leq d\left(u,x\right)+d\left(x,y\right)+d\left(y,v\right) <s/n \leq \varepsilon.
  \end{equation*}
  Since $\varepsilon<\frac{\left|\left(T_{j,1}^{-1}\right)'\left(T_{j,1}x_0\right)-\left(T_{j,2}^{-1}\right)'\left(T_{j,2}x_0\right) \right|^2}{2b_j}$, Corollary \ref{corollary: intersection_diff_derivative} tells us that
  \begin{equation*}
    |t|<\delta=\frac{2\varepsilon}{\left|\left(T_{j,1}^{-1}\right)'\left(T_{j,1}x_0\right)-\left(T_{j,2}^{-1}\right)'\left(T_{j,2}x_0\right) \right|}.
  \end{equation*}
  Therefore,
  \begin{equation*}
    d\left(\left(z, z\right), \left(T_{j,1}x_0, T_{j,2}x_0\right)\right)< \sqrt{2}\cdot\delta.
  \end{equation*}
  Since by the choice of $\varepsilon$, we have $\sqrt{2}\cdot\delta<\epsilon_0$, then
  \begin{align*}
    d\left(\left(u,v\right), \left(x_0, x_0\right)\right)=d\left(\bar T_{j}^{-1}\left(z, z\right), T_j^{-1}\left(T_{j,1}x_0, T_{j,2}x_0\right)\right) \\\leq \sigma\, d\left(\left(z, z\right), \left(T_{j,1}x_0, T_{j,2}x_0\right)\right)<\sqrt{2}\cdot\delta.
  \end{align*}
  Finally,
  \begin{align*}
    d\left(\left(x,y\right), \left(x_0, x_0\right)\right)\leq d\left(\left(x,y\right), \left(u,v\right)\right)+d\left(\left(u,v\right), \left(x_0, x_0\right)\right)<\\\varepsilon\left(\frac{1}{3}+ \frac{2\sqrt{2}}{\left|\left(T_{j,1}^{-1}\right)'\left(T_{j,1}x_0\right)-\left(T_{j,2}^{-1}\right)'\left(T_{j,2}x_0\right) \right|}\right).
  \end{align*}

  On the other hand, if $(x,y)\in T_j^{-1}\left[B_{c_j}\left(T_{j,1}x_0, T_{j,2}x_0\right)\setminus B_{c_j/2}\left(T_{j,1}x_0, T_{j,2}x_0\right)\right]$, then
  \begin{equation*}
    d\left(\left(x,y\right), D\right)\ge \varepsilon/3 \text{ or } d\left(\left(x,y\right), T_j^{-1}D\right)\ge \varepsilon/3.
  \end{equation*}
  Otherwise, by the triangle inequality, we would get $d_j\leq\frac{2\varepsilon}{3}$, contradicting the choice of $\varepsilon$.
  So, by Lemma \ref{lemma: A_epsilon upper_bound}, we get $(x,y) \not \in \Delta_n \cap \bar T^{-1}_j \Delta_n$.

  In case $(x_0, x_0)\not \in \bar T_j^{-1}D$, we have that
  \begin{equation*}
    \bar T_j^{-1}\Delta_n\subseteq \left\{\left(x,y\right)\in \overline{U_j}:d\left(\left(x,y\right), \bar T_j^{-1}D\right)<\varepsilon/3 \right\}.
  \end{equation*}
  Note that
  \begin{equation*}
    2r_j=d\left(\left(x_0,x_0\right), \bar T_j^{-1}D\right)\leq d\left(\left(x_0,x_0\right), \left(x,y\right)\right)+ d\left(\left(x,y\right), \bar T_j^{-1}D\right),
  \end{equation*}
  which means that if $(x,y)\in V \cap U_j$, then
  \begin{equation*}
    d\left(\left(x,y\right), \bar T_j^{-1}D\right)\ge r_j>\varepsilon/3,
  \end{equation*}
  which implies that $(x,y) \not \in \bar T_j^{-1} \Delta_n$.

  In conclusion,
  \begin{equation*}
    \mu\left(\Delta_n \cap \bar T^{-1}\Delta_n \cap V\right) \leq \sum_{i: (x_0, x_0)\in \overline{U_j}}\mu\left(\Delta_n \cap \bar T_j^{-1}\Delta_n \cap V\right) =\mathcal{O}\left(n^{-2}\right),
  \end{equation*}
  since by the positivity assumption and the fact that $h\in V_\alpha\subseteq L^\infty$,
  \begin{equation*}
    \mu\left(B_{1/n}\left(x_0, x_0\right)\right)=\mathcal{O}\left(n^{-2}\right).
  \end{equation*}
  The conclusion follows by the fact that $\mu\left(\Delta_n\right)=\mathcal{O}\left(n^{-1}\right)$.
\end{proof}

\subsection{Corollaries and sufficient conditions for $\theta=1$}

Now, we point out some corollaries of Proposition \ref{proposition: isolated_points}.
An important thing to mention is that if $\bar T$ satisfies the properties described in the beginning of this section, so do the iterates of $\bar T$. This means that we can also apply Proposition \ref{proposition: isolated_points} to the iterates of $\bar T$.
With this in mind, we have the following:

\begin{corollary} \label{corollary: finite number of T-jD cap D}
  Suppose that for all $\kappa\in \mathbb{N}$, the number of points of $\bigcup_j \bar T_j^{-\kappa}D\cap D$ is finite. If for all these points the derivatives of the components are different, then the extremal index is 1.
\end{corollary}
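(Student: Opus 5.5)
We need to show $p_k=0$ for every $k\ge 0$, since then $\theta=1-\sum_k p_k=1$ by Theorem~\ref{Thrm: Main result using spectral approach}. Fix $k$ and set $\kappa=k+1$. The numerator of $p_{k,n}$ in \eqref{equation: pk} is bounded by $\mu(\Delta_n\cap \bar T^{-\kappa}\Delta_n)$, so it suffices to prove
\[
\lim_{n\to\infty}\frac{\mu(\Delta_n\cap \bar T^{-\kappa}\Delta_n)}{\mu(\Delta_n)}=0 .
\]
As noted before the statement, the iterate $\bar T^\kappa$ satisfies the standing assumptions of this section. Its branches are the compositions indexed by cylinders of length $\kappa$; since the partition is finite there are finitely many, and we still call them $\bar T_j^{\kappa}$. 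So we apply Proposition~\ref{proposition: isolated_points} to $\bar T^\kappa$ in place of $\bar T$.

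Let $F=\bigcup_j \bar T_j^{-\kappa}D\cap D$. By hypothesis $F$ is a finite set, and at each of its points the derivatives of the components of the relevant branch of $\bar T^\kappa$ differ. For each $(x_0,x_0)\in F$, Proposition~\ref{proposition: isolated_points} gives a neighbourhood $V_{x_0}$ such that $\mu(\Delta_n\cap \bar T^{-\kappa}\Delta_n\cap V_{x_0})=o(\mu(\Delta_n))$. Let $W=\bigcup_{x_0}V_{x_0}$, an open set containing $F$. Since $F$ is finite, the total contribution of $W$ is $o(\mu(\Delta_n))$.

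On the complement of $W$, we argue branch by branch. Let $K_j=\overline{U_j}\setminus W$, where $\overline{U_j}$ is the closure of the domain of the $j$-th branch of $\bar T^\kappa$. The compact sets $D\cap K_j$ and $\bar T_j^{-\kappa}D\cap K_j$ are disjoint, because every common point of $D$ and $\bar T_j^{-\kappa}D$ lies in $F\subset W$. Hence they are at positive distance. By Lemma~\ref{lemma: A_epsilon upper_bound}, applied to $\bar T^\kappa$ (using the contraction $\sigma^\kappa$ of inverse branches), $\Delta_n$ and $\bar T_j^{-\kappa}\Delta_n$ lie in $O(1/n)$-neighbourhoods of $D$ and $\bar T_j^{-\kappa}D$. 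So, as in the proof of the preceding proposition on disjoint preimages, $\Delta_n\cap \bar T_j^{-\kappa}\Delta_n\cap K_j=\emptyset$ for all large $n$. With finitely many branches, and with $\bigcup_j U_j$ of full measure by \ref{condition: PE3}, we get $\mu(\Delta_n\cap \bar T^{-\kappa}\Delta_n\setminus W)=0$ for $n$ large. Therefore $p_k=0$ for every $k$, and $\theta=1$.

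The main obstacle is the boundary issue, which has two parts. First, a point of $F$ may lie on $\partial U_j$ for several branches; the proof of Proposition~\ref{proposition: isolated_points} handles this by building $V$ as a union over the adjacent branches. Second, the ``branches'' of $\bar T^\kappa$ must be indexed by cylinders for the finiteness and compactness arguments, so I would check that the Taylor estimate of Lemma~\ref{lemma: intersection_diff_derivative} still holds for these composed branches. This needs $C^2$ bounds on $\bar T^\kappa$, which hold because there are finitely many $C^2$ branches.
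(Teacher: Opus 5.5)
Your proposal is correct and follows essentially the same route as the paper. Like the paper, you bound the numerator of $p_{k,n}$ by $\mu(\Delta_n\cap \bar T^{-(k+1)}\Delta_n)$, apply Proposition~\ref{proposition: isolated_points} to the iterate near the finitely many diagonal return points, and use compactness on the complement of those neighbourhoods. One small tightening, which the paper's own write-up also glosses over: positive distance between $D\cap K_j$ and $\bar T_j^{-\kappa}D\cap K_j$ does not by itself rule out points of $K_j$ that are close to both curves near their portions inside $W$. Argue instead that $\max\{d(\cdot,D),\,d(\cdot,\bar T_j^{-\kappa}D)\}$ is continuous, vanishes only at points of $F$, and so has a positive minimum on the compact set $K_j$.
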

\begin{proof}
  For each $(x_k, x_k)\in \bigcup_j\bar T_j^{-1}D\cap D$, let $V_k$ be the neighborhood as in Proposition \ref{proposition: isolated_points}, which we assume, without loss of generality, to be an open set of $\left[0,1\right]^2$.
  Let $W=\left(\bigcup_k V_k\right)^c$.
  We note that $\bigcup_j\bar T_j^{-1}D \cap W$ is a compact set whose intersection with $D$ is empty. So we can take $n_0$ large enough such that
  \begin{equation*}
    \bigcup_j\bar T_j^{-1}\Delta_n \cap \Delta_n \cap W=\emptyset \quad \text{for all } n\geq n_0.
  \end{equation*}
  Note that, up to Lebesgue measure zero, and consequently $\mu$-measure zero,
  \begin{equation*}
    \bigcup_j\bar T_j^{-1}\Delta_n= \bar T^{-1} \Delta_n.
  \end{equation*}
  This allows us to make the following estimation
  \begin{equation*}
    \mu\left(\bar T^{-1}\Delta_n\cap \Delta_n \right) \leq \sum_k \mu\left(\bar T^{-1} \Delta_n \cap \Delta_n \cap V_k \right)+\mu\left(\bar T^{-1}\Delta_n \cap \Delta_n \cap W \right).
  \end{equation*}
  Dividing by $\mu\left(\Delta_n\right)$ and using Proposition \ref{proposition: isolated_points}, we conclude that $p_0=0$, where $p_\kappa$ is as defined in \eqref{definition:p_k}.
  For $\kappa\ge 0$, we can apply the same reasoning.
  Since
  \begin{equation*}
    \mu\left(\Delta_n\cap \bar T^{-1}\Delta_n^c \cap\cdots\cap \bar T^{-\kappa}\Delta_n^c \cap \bar T^{-(\kappa+1)}\Delta_n\right)\le \mu\left(\Delta_n\cap \bar T^{-(\kappa+1)}\Delta_n\right),
  \end{equation*}
  it follows that $p_\kappa=0$.
\end{proof}

\begin{corollary} \label{corollary: lower_bound for diference of the derivtives}
  Let $\ell\ge 1$. Assume that there exists $k_\ell>0$ such that for all intersection points $(x_0, x_0)\in D \cap \bigcup _j\bar T_j^{-\ell}D$,
  \begin{equation*}
    \left|\left(T_{j, 1}^{-\ell}\right)'\left( T_{j,1}^\ell x_0 \right)-\left(T_{j,2}^{-\ell}\right)'\left( T_{j,2}^\ell x_0\right)\right|>k_\ell.
  \end{equation*}
  Then, the number of points of $D\cap \bigcup_j\bar T^{-\ell}D$ is finite, which implies that $p_{\ell-1}=0$.
\end{corollary}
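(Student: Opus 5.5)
The plan is to apply Proposition~\ref{proposition: isolated_points} to the iterate $\bar T^\ell$ and to use compactness to get finiteness. As remarked before Corollary~\ref{corollary: finite number of T-jD cap D}, $\bar T^\ell$ satisfies the same standing assumptions as $\bar T$. Its branches are $\bar T_j^\ell$, indexed by the finitely many cylinders of length $\ell$, with factor maps $T_{j,1}^\ell, T_{j,2}^\ell$. The hypothesis says that at every intersection point $(x_0,x_0)\in D\cap \bar T_j^{-\ell}D$ the inverse-branch derivatives differ by more than $k_\ell$, and by the chain rule the factor derivatives $(T_{j,1}^\ell)'(x_0)\neq (T_{j,2}^\ell)'(x_0)$ then also differ. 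So Proposition~\ref{proposition: isolated_points} applied to $\bar T^\ell$ gives, for each such point, a neighbourhood $V$ in which that point is the only element of $D\cap\bar T^{-\ell}D$.

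Finiteness will then follow from a compactness argument carried out branch by branch. Fix one of the finitely many branches $j$. The set $K_j:=D\cap \bar T_j|_{\overline{U_j}}^{-\ell}D$ is compact, since it is the intersection of $D$ with the preimage of the closed set $D$ under a continuous map defined on a compact set. If $K_j$ were infinite, it would have an accumulation point $(x_*,x_*)\in K_j$. Then $(x_*,x_*)$ is itself an intersection point, so the hypothesis applies there, and Proposition~\ref{proposition: isolated_points} makes it isolated, which is a contradiction.

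The only delicate point is at the accumulation point itself. Proposition~\ref{proposition: isolated_points} requires only that the derivatives differ at $(x_*,x_*)$, and this is guaranteed because $(x_*,x_*)\in K_j$ lies in the set where the hypothesis is imposed. The role of the uniform constant $k_\ell$ is to make the isolation quantitative. From Lemma~\ref{lemma: intersection_diff_derivative}, the isolating radius $c_j=|a|/b_j$ is at least $k_\ell/b_j$, where $b_j$ is uniformly bounded by the $C^2$ bounds on the finitely many branches. Hence distinct points of $K_j$ are separated by a uniform positive distance, and a compact set with uniformly separated points is finite. I would present this uniform-separation version, since it avoids any worry about boundary points of $U_j$. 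Taking the union over the finitely many $j$ shows that $D\cap\bigcup_j\bar T_j^{-\ell}D$ is finite.

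Finally, to get $p_{\ell-1}=0$, I would repeat the proof of Corollary~\ref{corollary: finite number of T-jD cap D} for $\bar T^\ell$, using the finitely many points just found. Let $V_k$ be the isolating neighbourhoods and $W$ the complement of their union. Then $\bigcup_j\bar T_j^{-\ell}D\cap W$ is a compact set disjoint from $D$, so for large $n$ we have $\Delta_n\cap\bar T^{-\ell}\Delta_n\cap W=\emptyset$. On each $V_k$, the last part of Proposition~\ref{proposition: isolated_points} gives $\mu(\Delta_n\cap\bar T^{-\ell}\Delta_n\cap V_k)=o(\mu(\Delta_n))$. Since
\[
p_{\ell-1,n}\le \frac{\mu(\Delta_n\cap\bar T^{-\ell}\Delta_n)}{\mu(\Delta_n)},
\]
it follows that $p_{\ell-1}=0$. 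I expect the main obstacle to be checking that Proposition~\ref{proposition: isolated_points} transfers cleanly to $\bar T^\ell$, in particular that branch domains are still rectangles, that $b_j$ is finite, and how boundary points of cylinders are handled.
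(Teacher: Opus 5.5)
Your proposal is correct. The version you say you would present is essentially the paper's argument. You apply Proposition~\ref{proposition: isolated_points} (via Lemma~\ref{lemma: intersection_diff_derivative}) to the branches of $\bar T^\ell$ and use $k_\ell$ to get the uniform isolating radius $c_j>k_\ell/b_j$. You then deduce finiteness and obtain $p_{\ell-1}=0$ by running the proof of Corollary~\ref{corollary: finite number of T-jD cap D} for $\bar T^\ell$, together with $p_{\ell-1,n}\le \mu(\Delta_n\cap\bar T^{-\ell}\Delta_n)/\mu(\Delta_n)$; the paper leaves that last step implicit.

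The one difference in that version is how the radius is turned into finiteness. The paper bounds from below the arc length of $\bar T_j^{-1}D$ between two intersection points, using that $(T_{j,i}^{-1})'$ is bounded away from zero. You instead assert a uniform separation directly. Note that $c_j$ isolates in the image parameter $t$, not in the domain, so you need one more line. Two distinct points of $K_j$ have images $w_0=T^\ell_{j,1}x_0$ and $w_1=T^\ell_{j,1}x_1$ with $|w_1-w_0|>k_\ell/b_j$. Since the branch is injective and these values lie in $[0,1]$, there are at most $1+b_j/k_\ell$ such points. This is simpler than the paper's arc-length route.

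Your first argument is a genuinely different and more elementary route: an infinite compact $K_j$ has an accumulation point in $K_j$, and Proposition~\ref{proposition: isolated_points} says that point is isolated. It uses only that the derivatives differ at each point of the closed set $K_j$. So it shows that the uniform constant $k_\ell$ in the hypothesis is not really needed. On the compact set $K_j$, the quantity $|1/(T^\ell_{j,1})'(x_0)-1/(T^\ell_{j,2})'(x_0)|$ is continuous and pointwise positive, hence automatically bounded below. In exchange, the paper's approach (and your second version) yields an explicit bound on the number of intersection points in terms of $k_\ell$ and $b_j$.
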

\begin{proof}
  It is enough to prove that, for each $i$, $D\cap \bar T_j^{-1}D$ is finite.
  By the proof of the proposition, if $(x_0, x_0)\in D\cap \bar T_j^{-1}D$, then it is the unique point of that set when intersected with $\bar T_j^{-1}\left(B_{c_j}\left(T_{j,1}x_0, T_{j,2} x_0\right)\right)$.
  But $c_j>k_1/b_j$, so $(x_0, x_0)$ is the unique point in
  \begin{equation*}
    D\cap \bar T_j^{-1}D\cap \bar T_j^{-1}\left(B_{k_1/b_j}\left(T_{j,1}x_0, T_{j,2} x_0\right)\right).
  \end{equation*}
  What remains to prove is that, for all $\varepsilon>0$, there is $l_\varepsilon>0$ such that for all $(x,x)\in \overline{U_j} \cap D$, the curve $\bar T_j^{-1} \left[B_{\varepsilon}\left(x,x\right) \cap D\right]$ has a length greater than $l_\varepsilon$.
  This means that if $(x_0, x_0)$ and $(x_1, x_1)$ are different points of $\bar T_{j}^{-1}D$, then $(x_1,x_1)\not \in \bar T_j^{-1}\left(B_{k_1/b_j}\left(T_{j,1}x_0, T_{j,2} x_0\right)\right)$, which implies, by the construction of the proof, that the path in $\bar T_j^{-1}D$ whose end-points are $(x_0, x_0)$ and $(x_1, x_1)$ has length greater than $l_{k_1/b_j}/2$.
  This implies that there is only a finite number of such points.
  To get the lower bound for the length of the curve, we note that
  \begin{equation*}
    l\left(\bar T_j^{-1}\left[B_\varepsilon\left(x, x\right)\cap D\right]\right)= \int_{-\varepsilon}^\varepsilon\sqrt{\left[\left(T_{j,1}^{-1}\right)'\left(x+t\right)\right]^2+\left[\left(T_{j,2}^{-1}\right)'\left(x+t\right)\right]^2}\,dt.
  \end{equation*}
  The domain of integration might not be exactly $(-\varepsilon, \varepsilon)$ as $B_\varepsilon\left(x, x\right)\cap D$ might not be contained in $\overline{U_j}$. But it will always be an interval of length at least $\varepsilon$.
  Suppose that the domain of integration is contained in $\overline{U_j}$. Then, using the fact that $\bar T_j$ is $C^1$ and invertible in an open set containing $\overline{U_j}$, we conclude, by the formula of the derivative of the inverse function, that $\left(T_{j,1}^{-1}\right)'$ and $\left(T_{j,2}^{-1}\right)'$ are both bounded away from zero, allowing us to obtain a lower bound for $l\left(\bar T_j^{-1}\left[B_\varepsilon\left(x, x\right)\cap D\right]\right)$.
\end{proof}

\section{The Extremal Index as a measure of dynamical compatibility}
\label{sec:compatibility-EI}

In this section, we show how the Extremal Index distinguishes between compatible and incompatible factor dynamics, quantifying the degree of compatibility. 

We will consider 2-dimensional linear dynamics, which will simplify the computation of the coefficients $p_k$ given in \eqref{definition:p_k}, which are crucial to compute the Extremal Index. These systems clearly fit the framework of Theorem~\ref{Thrm: Main result using spectral approach}, as they can be seen as particular cases of the class of maps studied in Section~\ref{subsec:piecewise-expanding-example}, for example.

We will start by verifying that if the linear maps have different coefficients then the Extremal Index is 1, which could be obtained easily from the results in Section~\ref{sec:EI}. However, we perform a direct analysis in order to provide further insight about what is behind this interpretation of the role of the Extremal Index and at the same time take the opportunity to set the notation and establish a benchmark against which the cases of full and partial compatibility can be compared.

Full compatibility is obtained by considering the same linear coefficients for the two factor maps, which will give rise to an Extremal Index less than 1, where we recover the formula obtained in previous works, \cite{dq}. Then, we will introduce an example of different linear factor maps, which coincide in some part of phase space, creating a relevant compatibility detected by an Extremal Index less than 1, which will turn out to be a weighted average of the two previous cases.

\subsection{Direct product of different linear maps}
Consider the map
\begin{align}
\label{eq:linear-map}
  T\colon [0,1]^2& \longrightarrow [0,1]^2\\ \nonumber
 (x,y)&\longmapsto \left(\mathbf{a} x \mod 1, \mathbf{b} y \mod 1\right)
\end{align}
where $\mathbf{a}, \mathbf{b} \in \mathbb{Z}\setminus\left\{-1, 0,1\right\}$. The invariant measure $\mu$ is Lebesgue measure.
In this case, the observable function giving rise to the stochastic process, can be written in the simplified form:
 \begin{align}
 \label{eq:observable}
  \varphi\colon [0,1]^2& \longrightarrow \R\\ \nonumber
 (x,y)&\longmapsto -\log d(x,y) 
\end{align}
For each $\tau>0$, define
\begin{equation}
	u_n\left(\tau\right)=\log \frac{2n}{\tau}.
\end{equation}
It is clear that the set $\Delta_n=\left\{\varphi>u_n \right\}=\left\{\left(x,y\right)\in \left[0,1\right]^2:d\left(x,y\right)<\frac{\tau}{2n}\right\}$. We denote the quantity $\frac{\tau}{2n}$ by $r_n$.
It is also clear, since $\mu$ is the Lebesgue measure, that $\mu\left(\Delta_n\right) \sim \frac{\tau}{n}$.
The map $T$ can be partitioned into $\left|\mathbf{a}\mathbf{b}\right|$ open sets, where $T$ restricted to these parts is a diffeomorphism.
Instead of denoting these elements by $U_i$ as in the previous chapter, we will
denote them by $I_{kl}$, where $k \in \{0,...,\mathbf{a}-1\}$ or $k\in\{-1,...,\mathbf{a}\}$ and $l \in \{0,...,\mathbf{b}-1\}$ or $l\in\{-1,..., \mathbf{b}\}$, depending on whether $\mathbf{a}$ and $\mathbf{b}$ are negative or positive.
Similarly, we denote the elements of the analogous partition $T^j$ by $I_{kl}^j$, where $0 \leq k \leq\mathbf{a}^j-1$ or $-1\ge k \ge \mathbf{a}^j$, and $0 \leq l \leq \mathbf{b}^j-1$ or $-1\ge l\ge \mathbf{b}^j$, depending on whether $\mathbf{a}^j$ and $\mathbf{b}^j$ are positive or negative (the sign that matters is the one of $\mathbf{a}^j$, not $\mathbf{a}$).
Explicitly,
\begin{equation}
	I_{kl}^j=\left\{\left(x,y\right)\in \left[0,1\right]^2: x\in I_k^j, \, y\in J_l^j \right\},
\end{equation}
where $I_k^j=\left(\frac{k}{\mathbf{a}^j}, \frac{k+1}{\mathbf{a}^j}\right)$ or $I_k^j=\left(\frac{k+1}{\mathbf{a}^j}, \frac{k}{\mathbf{a}^j}\right)$, depending on whether $\mathbf{a}^j$ is positive or negative. The set $J_l^j$ is defined similarly, but replacing $\mathbf{a}$ by $\mathbf{b}$.
We define the maps
\begin{equation}
	T_{kl}^j:\mathbb{R}^2\to \mathbb{R}^2, \left(x,y\right) \mapsto \left(\mathbf{a}^j x-k, \mathbf{b}^j y-l\right).
\end{equation}
Clearly, $T^j|_{I^j_{kl}}=T^j_{kl}|_{I^j_{kl}}$.
We denote the diagonal of $\left[0,1\right]^2$ by $D$.

In this section, we verify that when $\left|\mathbf{a}\right|\neq \left| \mathbf{b}\right|$, then Extremal Index is $\theta=1$.
This can be explained by the fact that, for a fixed $j\in\mathbb{N}$, there is only a finite number of points on the diagonal that return to the diagonal after $j$ iterations. Since the derivatives of the component maps are different, then the area of a small neighbourhood of those points that return to a neighbourhood of the diagonal is negligible for the computation of the Extremal Index.

The proof will be based on checking that the following condition holds:
\begin{equation}
\label{eq:D'}
\lim_{n\to\infty}n\mu\left(\Delta_n\cap T^{-j}(\Delta_n)\right)=0, \quad\text{for all}\quad j\in\N.
\end{equation}
Note that validity of \eqref{eq:D'} implies that $p_k=0$, for all $k\in\N_0$ and therefore $\theta=1-\sum_{k\geq0} p_k=1$.

We start by noting that $T^{-j}\left(D\right) \cap I_{kl}^j$ is the line
\begin{equation*}
	\left( y- \frac{l}{\mathbf{b}^j}\right)=\frac{\mathbf{a}^j}{\mathbf{b}^j}\left(x-\frac{k}{\mathbf{a}^j}\right)
\end{equation*}
intersected with $I_{kl}^j$.
The set $T^{-j}\left(\Delta_n\right) \cap I_{kl}^j$ is the space between the lines
\begin{equation*}
	y-\frac{l+\left(\frac{t}{2n}\right)}{\mathbf{b}^j}=\frac{\mathbf{a}^j}{\mathbf{b}^j}\left(x-\frac{k}{\mathbf{a}^j}\right);
\qquad	y-\frac{l}{\mathbf{b}^j}=\frac{\mathbf{a}^j}{\mathbf{b}^j}\left(x-\frac{k+\left(\frac{t}{2n}\right)}{\mathbf{a}^j}\right).
\end{equation*}
So, $T^{-j}\left(\Delta_n\right) \cap I_{kl}^j$ is the set of points $\left(x,y\right)\in I_{kl}^j$ such that
\begin{equation*}
	\left|\left( y- \frac{l}{\mathbf{b}^j}\right)-\frac{\mathbf{a}^j}{\mathbf{b}^j}\left(x-\frac{k}{\mathbf{a}^j}\right)\right| < \frac{r_n}{\left|\mathbf{b}\right|^j}.
\end{equation*}
We note that if $D \cap T^{-j}\left(D  \right) \cap I_{kl}^j \neq \emptyset$, then it contains only one point, since it corresponds to the intersection of two linear maps with different derivatives.
To prove condition \eqref{eq:D'}, we need an upper bound for the measure of the sets $\Delta_n\cap T^{-j}\Delta_n$. To get it, we analyse these sets restricted to $I^j_{kl}$.
\begin{lemma} \label{lemma: upper bound for An T-jAn}
	Suppose that $T:[0,1]^2\to[0,1]^2$ is a map and that there exists an open set $A$ such that $T^j|_A$ is a diffeomorphism between $A$ and $\operatorname{int}\left([0,1]^2\right)$ of the form $T^j|_A(x,y)=(\mathbf{a}^j x-q, \mathbf{b}^j y-p)$, with $\left|\mathbf{a}\right| \neq \left|\mathbf{b}\right|$. Suppose also that $D \cap \overline{T^{-j}\left(D\right) \cap A}=(x_0,x_0)$. Then, the set $\Delta_n \cap T^{-j}\left(\Delta_n\right) \cap A$ is a subset of
	\begin{equation*}
		\left\{(x,y) \in A: d(x,x_0) \leq \frac{r_n\left(1+\left|\mathbf{b}\right|^j\right)}{\left|\left|\mathbf{b}\right|^j-\left|\mathbf{a}\right|^j\right|}, \,d(y,x_0) \leq \frac{r_n \left(1+\left|\mathbf{a}\right|^j\right)}{\left|\left|\mathbf{b}\right|^j-\left|\mathbf{a}\right|^j\right|}\right\},
	\end{equation*}
	which is non-empty.
\end{lemma}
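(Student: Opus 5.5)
Everything is linear, so I will reduce the statement to solving a $2\times 2$ linear system. Write $A:=\mathbf{a}^j$, $B:=\mathbf{b}^j$, so that $T^j|_A(x,y)=(Ax-q,By-p)$. The point $(x_0,x_0)$ lies in the closure of $T^{-j}(D)\cap A$, and by continuity of the affine map it satisfies $Ax_0-q=Bx_0-p$. I will use this to recentre coordinates at the fixed intersection point: set $u=x-x_0$ and $v=y-x_0$.

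Take $(x,y)\in \Delta_n\cap T^{-j}(\Delta_n)\cap A$. Membership in $\Delta_n$ gives $|u-v|=|x-y|<r_n$. Membership in $T^{-j}(\Delta_n)$ gives $|(Ax-q)-(By-p)|<r_n$. Substituting the relation for $x_0$, this becomes $|Au-Bv|<r_n$. Now write $\alpha:=u-v$ and $\beta:=Au-Bv$, both of modulus less than $r_n$. Solving the linear system gives
\[
(B-A)u=B\alpha-\beta,\qquad (B-A)v=A\alpha-\beta .
\]
Hence $|u|\le r_n(1+|B|)/|B-A|$ and $|v|\le r_n(1+|A|)/|B-A|$.

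It remains to compare $|B-A|$ with $\bigl||\mathbf{b}|^j-|\mathbf{a}|^j\bigr|$. By the reverse triangle inequality, $|B-A|\ge \bigl||B|-|A|\bigr|=\bigl||\mathbf{b}|^j-|\mathbf{a}|^j\bigr|>0$, where positivity uses $|\mathbf{a}|\neq|\mathbf{b}|$. Replacing $|B-A|$ by this smaller positive quantity only enlarges the bounds, which yields exactly the claimed inclusion. In particular $B\neq A$, so dividing by $B-A$ in the previous step is legitimate.

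For the non-emptiness claim I will exhibit a point of the displayed set. The point $(x_0,x_0)$ lies in $\overline{A}$ and trivially satisfies both distance bounds. Since $A$ is open and $(x_0,x_0)$ is in its closure, points of $A$ arbitrarily close to $(x_0,x_0)$ also satisfy the bounds, because the radii are strictly positive. So the displayed set is non-empty.

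The only delicate step is justifying $Ax_0-q=Bx_0-p$ when $(x_0,x_0)$ sits on $\partial A$ rather than in $A$. I plan to handle it by continuity of the affine extension of $T^j|_A$ to $\mathbb{R}^2$: the defining relation holds along a sequence in $T^{-j}(D)\cap A$ converging to $(x_0,x_0)$, and therefore passes to the limit. The rest is routine algebra.
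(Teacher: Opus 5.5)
Your proof is correct and complete, and it reaches the bound by a somewhat different execution than the paper.

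The paper argues metrically. It first assumes $|\mathbf{b}|^j>|\mathbf{a}|^j$ and chains
$d(y,x_0)\le |\mathbf{b}|^{-j}\,d(T_2^jy,T_2^jx_0)\le |\mathbf{b}|^{-j}\bigl(r_n+|\mathbf{a}|^j(d(x,y)+d(y,x_0))\bigr)$.
It then absorbs the $d(y,x_0)$ term into the left-hand side, obtains $d(x,x_0)$ by one more triangle inequality, and dismisses the reverse ordering as ``similar''.

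You instead solve the $2\times 2$ linear system exactly by Cramer's rule. Only afterwards do you replace $|\mathbf{b}^j-\mathbf{a}^j|$ by the smaller quantity $\bigl||\mathbf{b}|^j-|\mathbf{a}|^j\bigr|$.

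Your route has three advantages:
\begin{itemize}
\item It avoids the case split.
\item It gives the slightly sharper denominator $|\mathbf{b}^j-\mathbf{a}^j|$, which is strictly better when $\mathbf{a}^j$ and $\mathbf{b}^j$ have opposite signs.
\item It justifies two points the paper leaves implicit: the relation $\mathbf{a}^jx_0-q=\mathbf{b}^jx_0-p$ when $(x_0,x_0)$ lies on $\partial A$ (your continuity argument), and the non-emptiness claim, which the paper's proof never addresses.
\end{itemize}

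The paper's inequality chase has its own advantage. It only uses that one branch expands distances by at least $|\mathbf{b}|^j$ and the other by at most $|\mathbf{a}|^j$. So it carries over essentially unchanged, via the mean value theorem, to non-affine branches with separated derivative bounds $\inf|(T_2^j)'|>\sup|(T_1^j)'|$. Your argument, by contrast, relies on exact affinity.

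One cosmetic point: you use the letter $A$ both for the open set and for $\mathbf{a}^j$, so rename one of them.
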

\begin{proof}
	Suppose that $(x,y) \in \Delta_n \cap T^{-j}\left(\Delta_n\right) \cap A$ and that $\left(x_0, x_0\right)\in \overline{T^{-j}D\cap A}$.
	We note that $T^j|_A$ can be extended to $\mathbb{R}^2$, so we will assume that $T^j\left(x_0, x_0\right)=\left(T_1^j x_0, T_2^j x_0 \right)$ and $T^j_1x_0=T_2^jx_0$, where
	\begin{equation*}
		T_1^j(x)=\mathbf{a}^jx-q
	\qquad
		T_2^j(x)=\mathbf{b}^j x-p.
	\end{equation*}
	We want to estimate $d(x,x_0)$ and $d(y,x_0)$. Assume first that $\left|\mathbf{b}\right|^j>\left|\mathbf{a}\right|^j$. Then,
	\begin{align*}
		d(y,x_0)\leq&\frac{d\left(T_2^jy,T_2^j x_0\right)}{\left|\mathbf{b}\right|^j} \leq \frac{d\left(T_1^jx,T_2^j y\right)+d\left(T_1^jx,T_1^j x_0\right)}{\left|\mathbf{b}\right|^j}
		\leq \frac{r_n+ \left|\mathbf{a}\right|^jd\left(x, x_0\right)}{\left|\mathbf{b}\right|^j}\\ 
		\leq&\frac{r_n+ \left|\mathbf{a}\right|^j\left(d\left(y, x\right)+d\left(y, x_0\right)\right)}{\left|\mathbf{b}\right|^j} 
	\end{align*}
	Therefore,
	\begin{equation*}
		d(y,x_0) \leq \frac{r_n \left(1+\left|\mathbf{a}\right|^j\right)}{\left|\mathbf{b}\right|^j-\left|\mathbf{a}\right|^j}.
	\end{equation*}
	By a simple triangle inequality, we conclude that
	\begin{equation*}
		d(x,x_0) \leq d(y,x)+d(y,x_0)\leq \frac{r_n\left(1+\left|\mathbf{b}\right|^j\right)}{\left|\mathbf{b}\right|^j-\left|\mathbf{a}\right|^j}.
	\end{equation*}
	The proof of the case where $\left|\mathbf{a}\right|^j>\left|\mathbf{b}\right|^j$ is similar.
\end{proof}

Define the quantity
\begin{multline}
\label{eq:R'}
	\beta_n=\max\left\{m \in \mathbb{N}: \forall 1 \leq j \leq m: T^{-j}\Delta_n \cap \Delta_n \cap I_{kl}^j \neq \emptyset \iff D \cap \overline{T^{-j} D \cap I_{kl}^j} \neq \emptyset,\right.\\ \left.\forall k,l \text{ where $I_{kl}^j$ is defined.} \right\} 
\end{multline}
\begin{lemma} \label{lemma: R'n to infty}
	$\displaystyle \lim_{n \to \infty} \beta_n= +\infty$.
\end{lemma}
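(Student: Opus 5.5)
It suffices to show that for each fixed $m\in\mathbb N$ there is $n_0(m)$ such that $\beta_n\ge m$ for all $n\ge n_0(m)$. In other words, for every $1\le j\le m$ and every admissible pair $(k,l)$, once $n$ is large we need
\[
T^{-j}\Delta_n\cap\Delta_n\cap I^j_{kl}\neq\emptyset \iff D\cap\overline{T^{-j}D\cap I^j_{kl}}\neq\emptyset .
\]
For fixed $m$ there are only finitely many triples $(j,k,l)$ to check, namely at most $\sum_{j\le m}|\mathbf a|^j|\mathbf b|^j$ of them. So it is enough to find, for each single triple, an $n_0(j,k,l)$ beyond which the equivalence holds, and then take the maximum. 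The same argument then shows that the set in \eqref{eq:R'} is nonempty for large $n$.

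\textbf{The implication ``$\Leftarrow$''.} Assume $(x_0,x_0)\in D\cap\overline{T^{-j}D\cap I^j_{kl}}$. Since $\overline{T^{-j}D\cap I^j_{kl}}$ is a closed segment of the line $\bigl(y-\tfrac{l}{\mathbf b^j}\bigr)=\tfrac{\mathbf a^j}{\mathbf b^j}\bigl(x-\tfrac{k}{\mathbf a^j}\bigr)$ and $|\mathbf a|\ne|\mathbf b|$, this segment is transversal to $D$.

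Consider points $(x,y)\in I^j_{kl}$ on the open segment $T^{-j}D\cap I^j_{kl}$ that are close to $(x_0,x_0)$. Such points satisfy $T^j(x,y)\in D\subset\Delta_n$. Their distance to the diagonal is $|x-y|$, which tends to $0$ as $(x,y)\to(x_0,x_0)$, so for any $n$ we can choose such a point with $|x-y|<r_n$, i.e. lying in $\Delta_n$. This point belongs to $T^{-j}\Delta_n\cap\Delta_n\cap I^j_{kl}$, so that set is nonempty for every $n$. (This is also why Lemma~\ref{lemma: upper bound for An T-jAn} asserts non-emptiness.)

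\textbf{The implication ``$\Rightarrow$''.} Suppose $D\cap\overline{T^{-j}D\cap I^j_{kl}}=\emptyset$. Both sets are compact, so $\rho:=d\bigl(D,\overline{T^{-j}D\cap I^j_{kl}}\bigr)>0$. By the first inclusion of Lemma~\ref{lemma: A_epsilon upper_bound}, $\Delta_n$ lies in the $r_n$-neighbourhood of $D$.

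On $I^j_{kl}$ the explicit description computed before Lemma~\ref{lemma: upper bound for An T-jAn} applies: $T^{-j}\Delta_n\cap I^j_{kl}$ is the strip of points with
\[
\Bigl|\bigl(y-\tfrac{l}{\mathbf b^j}\bigr)-\tfrac{\mathbf a^j}{\mathbf b^j}\bigl(x-\tfrac{k}{\mathbf a^j}\bigr)\Bigr|<\frac{r_n}{|\mathbf b|^j}.
\]
This strip lies within Euclidean distance $r_n/|\mathbf b|^j\le r_n$ of the line, hence within distance $r_n$ of the closed segment $\overline{T^{-j}D\cap I^j_{kl}}$. (Points of $I^j_{kl}$ project onto the segment, up to a bounded factor for corner effects; bounding this factor is a constant-size adjustment.) Therefore, once $2r_n<\rho$ (with an additional harmless constant if needed), the two neighbourhoods are disjoint and $T^{-j}\Delta_n\cap\Delta_n\cap I^j_{kl}=\emptyset$. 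Since $r_n=\tau/(2n)\to0$, this holds for all large $n$.

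\textbf{Main obstacle.} The delicate step is the geometric claim in ``$\Rightarrow$'' near the corners of the rectangle $I^j_{kl}$. The line may approach $D$ exactly at a corner of $\overline{I^j_{kl}}$, which is excluded only through the closure in the definition. One must check that the strip, restricted to $I^j_{kl}$, stays inside a fixed multiple of $r_n$ around the closed segment. I would handle this with the elementary bound: for $(x,y)$ in the strip inside $I^j_{kl}$, moving horizontally by at most $r_n/|\mathbf a|^j$ lands on the line, and that horizontal move stays in the closure of $I^j_{kl}$ up to an $O(r_n)$ error. Combining the finitely many $n_0(j,k,l)$ completes the proof.
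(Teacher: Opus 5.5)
Your proof is correct, and its substantive step is the same as the paper's: for ``$\Rightarrow$'', compactness gives $\rho>0$, $\Delta_n$ lies within $r_n$ of $D$, and $T^{-j}\Delta_n\cap I^j_{kl}$ lies within $O(r_n)$ of $\overline{T^{-j}D\cap I^j_{kl}}$, so the intersection is empty once $r_n$ is small. The paper phrases this as a contradiction using monotonicity of $\beta_n$ rather than your direct maximum over finitely many triples $(j,k,l)$, and it tacitly assumes the ``$\Leftarrow$'' implication that you verify explicitly. The corner issue you flag does not arise: for $(x,y)\in I^j_{kl}$ one has $\mathbf b^j y-l\in(0,1)$, so the point $(x',y)$ with $\mathbf a^j x'-k=\mathbf b^j y-l$ lies exactly in $T^{-j}D\cap I^j_{kl}$ at distance less than $r_n/|\mathbf a|^j$ from $(x,y)$, and no error term or constant adjustment is needed (the paper uses the analogous vertical projection).
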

\begin{proof}
	Note that $\beta_n$ is an increasing sequence of integers. So, the only way its limit is not infinity is if the sequence is constant after a certain $n$. Assume that this is indeed the case.
	This means that there exists $j,k,l \in \mathbb{N}$ such that, for all $n \in \mathbb{N}$, $T^{-j}\Delta_n \cap \Delta_n \cap I_{kl}^j \neq \emptyset$, but $D \cap \overline{T^{-j}D \cap I_{kl}^j}=\emptyset$.
	Both $D$ and $\overline{T^{-j}D \cap I_{kl}^j}$ are compact, which means that if the intersection of these sets is empty, then $d\left(D, \overline{T^{-j}D \cap I_{kl}^j}\right) >0$.
	Take $n$ such $\frac{\tau}{n}<d\left(D, \overline{T^{-j}D \cap I_{kl}^j}\right)$.
	Then, if $(x,y) \in T^{-j}\Delta_n \cap \Delta_n \cap I_{kl}^j$, we have that
	\begin{equation*}
		d\left(D, \overline{T^{-j}D \cap I_{kl}^j}\right) \leq d\left(D, \left(x,y\right)\right)+ d\left(\overline{T^{-j}D \cap I_{kl}^j}, \left(x,y\right)\right) < 2\frac{\tau}{2n}=\frac{\tau}{n},
	\end{equation*}
	which yields a contradiction.
	To justify the last inequality, note that:
	\begin{equation*}
		\left(x,y\right)\in \Delta_n \implies |x-y|<\frac{\tau}{2n} \implies d\left(\left(x,y\right), D\right)\leq\frac{\tau}{2n}=r_n,
	\end{equation*}
	and also that
	\begin{equation*}
		\left(x,y\right) \in T^{-j}\Delta_n \cap I_{kl}^j \implies \left|\left( y- \frac{l}{\mathbf{b}^j}\right)-\frac{\mathbf{a}^j}{\mathbf{b}^j}\left(x-\frac{k}{\mathbf{a}^j}\right)\right| < \frac{r_n}{\left|\mathbf{b}\right|^j}.
	\end{equation*}
	Since $\left(x,y\right) \in I_{kl}^j$, then $\left(x, \frac{\mathbf{a}^j}{\mathbf{b}^j}\left(x-\frac{k}{\mathbf{a}^j}\right)+\frac{l}{\mathbf{b}^j}\right) \in I_{kl}^j \cap T^{-j}\left(D\right)$.
	This means that
	\begin{equation*}
		d\left(T^{-j}D \cap I_{kl}^j, \left(x,y\right)\right)\leq\frac{r_n}{\left|\mathbf{b}\right|^j}<r_n.
	\end{equation*}
	
	The fact that $d\left(T^{-j}D \cap I_{kl}^j, \left(x,y\right)\right)=d\left(\overline{T^{-j}D \cap I_{kl}^j}, \left(x,y\right)\right)$ concludes the proof.
\end{proof}

We are now ready to establish the following.
\begin{proposition}
	Condition \eqref{eq:D'} is satisfied.
\end{proposition}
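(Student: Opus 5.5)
Fix $j\in\N$. The plan is to decompose $\Delta_n\cap T^{-j}(\Delta_n)$ along the partition $\{I^j_{kl}\}$ of $T^j$. Up to a Lebesgue-null set (the partition boundaries), we have
\begin{equation*}
\mu\left(\Delta_n\cap T^{-j}\Delta_n\right)=\sum_{k,l}\mu\left(\Delta_n\cap T^{-j}\Delta_n\cap I^j_{kl}\right),
\end{equation*}
and the sum has finitely many terms, namely $|\mathbf{a}|^j|\mathbf{b}|^j$, independently of $n$. Hence it suffices to show that each term is $o(1/n)$; in fact we will get $O(n^{-2})$.

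By Lemma~\ref{lemma: R'n to infty}, we may take $n$ large enough that $\beta_n\ge j$. Then every $I^j_{kl}$ with $D\cap\overline{T^{-j}D\cap I^j_{kl}}=\emptyset$ contributes nothing, because $\Delta_n\cap T^{-j}\Delta_n\cap I^j_{kl}=\emptyset$. Consider now an $I^j_{kl}$ with $D\cap\overline{T^{-j}D\cap I^j_{kl}}\neq\emptyset$. Since $\overline{T^{-j}D\cap I^j_{kl}}$ is a segment of slope $\mathbf{a}^j/\mathbf{b}^j$, and $|\mathbf{a}|\neq|\mathbf{b}|$ forces this slope to differ from $1$, the segment meets $D$ in exactly one point $(x_0,x_0)$. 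We then apply Lemma~\ref{lemma: upper bound for An T-jAn} with $A=I^j_{kl}$; its hypotheses hold because $T^j|_{I^j_{kl}}=T^j_{kl}$ is an affine diffeomorphism onto the open square. The lemma places $\Delta_n\cap T^{-j}\Delta_n\cap I^j_{kl}$ inside a rectangle centred at $(x_0,x_0)$ with sides
\begin{equation*}
\frac{2r_n(1+|\mathbf{b}|^j)}{\bigl||\mathbf{b}|^j-|\mathbf{a}|^j\bigr|}\quad\text{and}\quad \frac{2r_n(1+|\mathbf{a}|^j)}{\bigl||\mathbf{b}|^j-|\mathbf{a}|^j\bigr|}.
\end{equation*}
Since $\mu=\mathrm{Leb}$ and $r_n=\tau/(2n)$, the measure of this piece is at most $C_j\tau^2/n^2$, where $C_j$ depends only on $j$, $\mathbf{a}$ and $\mathbf{b}$.

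Summing over the at most $|\mathbf{a}\mathbf{b}|^j$ pieces gives $n\mu(\Delta_n\cap T^{-j}\Delta_n)\le |\mathbf{a}\mathbf{b}|^jC_j\tau^2/n\to0$, which is \eqref{eq:D'}.

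The main obstacle is making sure that the only contributing pieces are those where $T^{-j}D$ actually touches $D$, and that each such touching occurs at a single point. The first part is exactly what the definition \eqref{eq:R'} of $\beta_n$ together with Lemma~\ref{lemma: R'n to infty} provides. The second part uses the distinct slopes: the segment has slope $\mathbf{a}^j/\mathbf{b}^j\neq 1$, so the intersection cannot be a whole segment, which is what makes Lemma~\ref{lemma: upper bound for An T-jAn} applicable. A minor point is that $(x_0,x_0)$ may lie on $\partial I^j_{kl}$. This does no harm, because the lemma is stated with the closure and the rectangle bound still holds for the points inside $A$.
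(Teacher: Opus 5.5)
Your proof is correct and follows the paper's argument. You decompose along the partition $\{I^j_{kl}\}$, use $\beta_n\ge j$ (Lemma~\ref{lemma: R'n to infty}) to discard the pieces where $T^{-j}D$ misses $D$, and apply Lemma~\ref{lemma: upper bound for An T-jAn} to bound each remaining piece by $O(r_n^2)$. The only difference is bookkeeping: you bound the number of contributing pieces by the total count $|\mathbf{a}\mathbf{b}|^j$, whereas the paper counts diagonal intersection points (at most $|\mathbf{a}^j-\mathbf{b}^j|+1$, each lying in at most four closures $\overline{I^j_{kl}}$); both counts are independent of $n$, which is all that matters for fixed $j$.
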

\begin{proof}
	The number of points of $\bigcup_{kl}D\cap \overline{T^{-j}D \cap I^j_{kl}}$ is bounded by $\left|\mathbf{a}^j-\mathbf{b}^j \right|+1$, since $(x,x)\in D\cap \overline{T^{-j}D \cap I^j_{kl}} \implies \mathbf{a}^jx-\mathbf{b}^jx \in \mathbb{Z}$.
Since the sets $I_{kl}^j$ are disjoint and their union has full measure, it follows that
\begin{align*}
		&n \mu \left(\Delta_n\cap T^{-j}\Delta_n\right)=
		n \sum_{k,l} \mu \left(\Delta_n\cap T^{-j}\Delta_n \cap I^j_{kl}\right).
	\end{align*} 
	Noting that each point of $\left[0,1\right]^2$ is, at most, in four of the $\overline{ I^j_{kl}}$, and assuming that $n$ is sufficiently large so that $\beta_n>j$ (which we can assume by Lemma~\ref{lemma: R'n to infty}), then we can use Lemma~\ref{lemma: upper bound for An T-jAn} to obtain
	\begin{equation*} \label{equation: intermediate D_0}
		n \mu \left(\Delta_n\cap T^{-j}\Delta_n\right)\leq 4n\left(\left(\left|\mathbf{a}^j-\mathbf{b}^j\right|+1\right)\left(\frac{\left(1+\left|\mathbf{a}\right|^j\right)\left(1+\left|\mathbf{b}\right|^j\right)}{\left(\left|\mathbf{a}\right|^j-\left|\mathbf{b}\right|^j\right)^2}\right)r_n^2\right).
	\end{equation*}
\end{proof}

\subsection{Direct product of identical linear maps}
In this section we consider $T$ to be given as in \eqref{eq:linear-map}, but this time we assume that $\mathbf{b}=\mathbf{a}$. Let the observable $\varphi$ be as before, \eqref{eq:observable}.

We define the quantity $\beta_n$ as in \eqref{eq:R'}. It goes to infinity as $n$ goes to infinity, by Lemma \ref{lemma: R'n to infty}. It is easy to see that condition \eqref{eq:D'} does not hold anymore, but we introduce a new version which implies that $p_k=0$ for all $k\in\N$ and then we will only be left with computing $p_0$ in order to obtain the Extremal Index.

Let $\Delta_n^{(1)}=\Delta_n\cap T^{-1}(\Delta_n^c)$. We will show that
\begin{equation}
\label{eq:D'1}
\lim_{n\to\infty}n\mu\left(\Delta_n^{(1)}\cap T^{-j}(\Delta_n)\right)=0, \quad\text{for all}\quad j\geq 2.
\end{equation}

\begin{proposition} \label{Proposition: D'1 for equal maps}
	Condition \eqref{eq:D'1} is satisfied.
\end{proposition}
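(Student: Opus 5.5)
Throughout, the map is $T(x,y)=(\mathbf{a}x \bmod 1,\mathbf{a}y \bmod 1)$ and $r_n=\tau/(2n)$. The plan has two ingredients: an explicit description of the points that escape the diagonal at time one, and the same counting argument used for \eqref{eq:D'}.

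\textbf{Step 1: describe $\Delta_n^{(1)}$.} Write $(x,y)\in\Delta_n$ as $y=x+t$ with $|t|<r_n$. Away from the discontinuity lines $\{x\in\frac{1}{\mathbf{a}}\mathbb{Z}\}\cup\{y\in\frac{1}{\mathbf{a}}\mathbb{Z}\}$, both coordinates lie in the same branch $I_{kk}$. There $T(x,y)=(x',x'+\mathbf{a}t)$ with $x'=\mathbf{a}x-k$.

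\textbf{Step 2: split into two types of escape.} A point of $\Delta_n^{(1)}$ is of one of two kinds.

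\emph{(i) Same-branch escape.} Both coordinates are in the same branch and $|\mathbf{a}t|\ge r_n$. Then $T(x,y)$ lies at distance $\ge r_n$ from $D$, measured in the coordinate $y-x$. Inductively, while the orbit stays in a single branch $I_{kk}^{m}$, the difference $y-x$ is multiplied by $\mathbf{a}$ at each step. Its modulus modulo $1$ can come back below $r_n$ only if the two coordinates fall into different branches of some iterate. Equivalently, $T^{j}(x,y)$ must lie near $T^{j}_{kl}$-images of lines with $k\ne l$.

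\emph{(ii) Boundary escape.} The two coordinates sit on different sides of a point $m/\mathbf{a}$. This set has measure $O(r_n^2)$, since it lies within $r_n$ of finitely many points $(m/\mathbf{a},m/\mathbf{a})$ in both coordinates. Hence $n\cdot O(r_n^2)\to0$.

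\textbf{Step 3: control case (i) branch by branch.} It suffices to bound $n\mu(\Delta_n\cap T^{-1}\Delta_n^c\cap T^{-j}\Delta_n\cap I_{kl}^j)$ summed over branches of $T^j$.

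On the diagonal branches $I_{kk}^j$, $T^j(x,y)=(\mathbf{a}^jx-k,\mathbf{a}^jy-k)$. Membership in $T^{-j}\Delta_n$ then means $|\mathbf{a}^j(y-x)|<r_n$, i.e.\ $|t|<r_n|\mathbf{a}|^{-j}$. This forces $|\mathbf{a}t|<r_n$ because $j\ge 1$, which contradicts $T(x,y)\notin\Delta_n$. So these branches contribute nothing. Since $j\ge2$, there is one subtlety: distance is measured in $[0,1]$ and not on the circle, so the relevant congruence is exactly the one stated.

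On the off-diagonal branches $I_{kl}^j$ with $k\neq l$, the line $T^{-j}D\cap I_{kl}^j$ is the translate $y-x=(l-k)/\mathbf{a}^j$ of the diagonal, with slope $1$. Requiring additionally $|y-x|<r_n$ confines $(x,y)$ to the corner where $I_{kl}^j$ meets $D$. This is possible only if $\overline{I_{kl}^j}$ touches $D$, which means $|k-l|\le1$ and the point lies within $O(r_n)$ of a grid point $(m/\mathbf{a}^j,m/\mathbf{a}^j)$. Each such corner region has measure $O(r_n^2)$, and there are at most $O(|\mathbf{a}|^j)$ of them. The total is $O_j(r_n^2)$, and multiplying by $n$ gives a quantity tending to $0$.

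\textbf{Main obstacle.} The delicate point is the off-diagonal analysis in Step 3, because the lines $T^{-j}D$ and $D$ are now parallel rather than transversal. The argument of Lemma~\ref{lemma: upper bound for An T-jAn} therefore fails, and the localisation must come from the branch geometry instead. Concretely, I would argue that a strip of width $r_n$ around $D$ meets an off-diagonal cell $I_{kl}^j$ only inside an $O(r_n)$-box at a corner lying on $D$. To make this rigorous I would invoke Lemma~\ref{lemma: R'n to infty} for large $n$. Summing the three contributions (diagonal branches, off-diagonal branches, boundary escapes) then yields \eqref{eq:D'1}.
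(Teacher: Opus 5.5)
Your proof is correct and is essentially the paper's argument. The decisive step in both is the same: on a diagonal branch $I^j_{kk}$, membership in $T^{-j}\Delta_n$ forces $|x-y|<r_n/|\mathbf{a}|^j$, hence $|\mathbf{a}(x-y)|<r_n$, which contradicts escape from $\Delta_n$ at time one. The only difference is how the remaining points are handled. You bound the off-diagonal cells and the boundary escapes by corner boxes of total measure $O_j(r_n^2)$, which needs no appeal to $\beta_n$. The paper instead uses $\beta_n\to\infty$ to make $\Delta_n\cap T^{-j}\Delta_n\cap I^j_{kl}$ empty for $k\neq l$, and absorbs the boundary escapes into the exceptional set $B$ with $m(B)=O(n^{-2})$.
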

\begin{proof}
	The first thing to notice is that
	\begin{equation*}
		\Delta_n^{(1)}=\left\{(x,y) \in [0,1]^2 :\frac{r_n}{\left|\mathbf{a}\right|}<|x-y|<r_n\right\}\cup B,
	\end{equation*}
	where $m\left(B\right)=\mathcal{O}\left(\frac{1}{n^2}\right)$.
	Note that for $j<\beta_n$ the set $I_{kl}^j \cap T^{-j} (\Delta_n) \cap \Delta_n^{(1)}$ is, in fact, empty.
	To see this, observe that
	\begin{equation*}
		(x,y) \in T^{-j}(\Delta_n) \cap I_{kl}^j \implies \left|\left(y- \frac{l}{\mathbf{a}^j}\right)-\left(x-\frac{k}{\mathbf{a}^j}\right)\right| < \frac{r_n}{\left|\mathbf{a}\right|^j}.
	\end{equation*}
	By the choice of $\beta_n$, the intersection $I_{kl}^j \cap T^{-j} (\Delta_n) \cap \Delta_n^{(1)}$ can only be non-empty if $k=l$. But in that case,
	\begin{equation*}
		(x,y)\in \Delta_n^{(1)}\cap T^{-j}(\Delta_n) \cap I_{kk}^j \implies \frac{r_n}{\left|\mathbf{a}\right|}<|x-y|<\frac{r_n}{\left|\mathbf{a}\right|^{j}}, 
	\end{equation*}
	and since $\left|\mathbf{a}\right|>1$, $\Delta_n^{(1)}\cap T^{-j}(\Delta_n) \cap I_{kk}^j=\emptyset$.
	Hence, the conclusion follows trivially.
\end{proof}
We are now able to compute the Extremal Index.
\begin{proposition} \label{proposition: Extremal index equal maps}
	The extremal index is $$\theta=\frac{\left|\mathbf{a}\right|-1}{\left|\mathbf{a}\right|}.$$
\end{proposition}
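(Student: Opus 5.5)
The argument reduces the extremal index to the single coefficient $p_0$. By Theorem~\ref{Thrm: Main result using spectral approach}, $\theta=1-\sum_{k\ge0}p_k$. Proposition~\ref{Proposition: D'1 for equal maps} gives $p_k=0$ for every $k\ge1$: for such $k$ the set in \eqref{equation: pk} is contained in $\Delta_n^{(1)}\cap T^{-(k+1)}\Delta_n$, and $n\mu$ of this set tends to $0$ by \eqref{eq:D'1}, while $n\mu(\Delta_n)\to\tau$. Hence $\theta=1-p_0$, with
\[
p_0=\lim_{n\to\infty}\frac{\mu(\Delta_n\cap T^{-1}\Delta_n)}{\mu(\Delta_n)}.
\]
It therefore suffices to prove $p_0=1/|\mathbf{a}|$.

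To compute $\mu(\Delta_n\cap T^{-1}\Delta_n)$, I decompose over the partition elements $I_{kl}=I^1_{kl}$ of $T$ and use $\mu=\mathrm{Leb}$. On $I_{kl}$ we have $T(x,y)=(\mathbf{a}x-k,\mathbf{a}y-l)$, so $T^{-1}\Delta_n\cap I_{kl}$ is the strip $|(y-x)-(l-k)/\mathbf{a}|<r_n/|\mathbf{a}|$.

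\emph{Off-diagonal blocks $k\neq l$.} I use Lemma~\ref{lemma: R'n to infty} with $j=1$: once $\beta_n\ge1$, the intersection $\Delta_n\cap T^{-1}\Delta_n\cap I_{kl}$ is nonempty only if $D$ meets $\overline{T^{-1}D\cap I_{kl}}$. For $k\ne l$ the line $y-x=(l-k)/\mathbf{a}$ is parallel to $D$ and at positive distance from it. (There is also the boundary wrap-around: a point near $x=y$ at a corner may map to $(0^+,1^-)$. Since the distance used is $d(x,y)=|x-y|$ on $[0,1]$, not the circle distance, such a point is not in $\Delta_n$.) So these blocks contribute nothing for large $n$.

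\emph{Diagonal blocks $k=l$.} Here the strip is $|y-x|<r_n/|\mathbf{a}|$, which lies inside $\Delta_n=\{|x-y|<r_n\}$. So within $I_{kk}$, $\Delta_n\cap T^{-1}\Delta_n=\{|x-y|<r_n/|\mathbf{a}|\}\cap I_{kk}$. Summing over the $|\mathbf{a}|$ diagonal squares, the total is the Lebesgue measure of $\{|x-y|<r_n/|\mathbf{a}|\}$ minus the parts of this thin strip that leave the diagonal squares $I_{kk}$ near their corners. Those corner pieces are triangles of area $O(r_n^2)$, and there are $O(|\mathbf{a}|)$ of them. Hence
\[
\mu(\Delta_n\cap T^{-1}\Delta_n)=\frac{2r_n}{|\mathbf{a}|}+O(r_n^2),
\qquad
\mu(\Delta_n)=2r_n+O(r_n^2).
\]
Dividing gives $p_0=1/|\mathbf{a}|$, and so $\theta=1-1/|\mathbf{a}|=(|\mathbf{a}|-1)/|\mathbf{a}|$. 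This is consistent with the description $\Delta_n^{(1)}=\{r_n/|\mathbf{a}|<|x-y|<r_n\}\cup B$ used in the proof of Proposition~\ref{Proposition: D'1 for equal maps}.

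The only delicate step is the bookkeeping for $p_0$ near the corners of the partition squares, and the case $\mathbf{a}<0$, where the indexing of the $I_{kl}$ changes. I would handle both by bounding all such boundary contributions crudely by $O(r_n^2)=o(\mu(\Delta_n))$. Everything else follows from the results already established.
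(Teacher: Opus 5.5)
Your proof is correct and follows the paper's route. You use \eqref{eq:D'1} to get $p_k=0$ for $k\ge1$, and then obtain $p_0=1/|\mathbf{a}|$ from the strip geometry. You compute $\mu(\Delta_n\cap T^{-1}\Delta_n)=2r_n/|\mathbf{a}|+O(r_n^2)$ directly, whereas the paper computes the complementary quantity $\mu(\Delta_n^{(1)})$; your block-by-block argument (off-diagonal squares contribute nothing, corner triangles are $O(r_n^2)$) supplies the detail the paper leaves implicit in its set $B$.
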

\begin{proof}
Start by noting that validity of \eqref{eq:D'1} implies that $p_k=0$ for all $k\in\N$. So we are left to compute
	\begin{equation*}
		p_0=\lim_{n \to \infty} 1- \frac{\mu\left(\Delta_n^{(1)}\right)}{\mu\left(\Delta_n\right)}=1-\lim_{n \to \infty} \frac{\frac{(\left|\mathbf{a}\right|-1)}{\left|\mathbf{a}\right|}r_n+\mathcal{O}\left(\frac{1}{n^2}\right)}{r_n}=1-\frac{\left|\mathbf{a}\right|-1}{\left|\mathbf{a}\right|}.
	\end{equation*}
\end{proof}

\subsection{Direct product of partially matching linear maps}

We consider now linear factor maps that coincide only in a given part of the phase space. Namely, we consider
\begin{align*}
  T\colon [0,1]^2& \longrightarrow [0,1]^2\\ \nonumber
 (x,y)&\longmapsto \left(T_1(x), T_2(y)\right)
\end{align*}
where 
\begin{equation*}
	T_1(x)=\mathbf{a} x\mod1 \quad\text{and}\quad T_2(y)= \begin{cases}
		\mathbf{a} y\mod1, & y \leq 1/\left|\mathbf{a}\right| \\
		\mathbf{a}\mathbf{b} y \mod 1, & y \ge 1/\left|\mathbf{a}\right|.
	\end{cases}
\end{equation*}
The invariant measure $\mu$ is still the Lebesgue measure and Theorem \ref{Thrm: Main result using spectral approach} is still valid.
For convenience, we label the partition associated with $T$ differently.
Let $\left\{J_k\right\}_{k=0}^{\left(\left|\mathbf{a}\right|-1\right)\left|\mathbf{b}\right|}$ be the partition associated with $T_2$.
Denote by $C_{i_0,...,i_{j-1}}$ the $j$-cylinders associated with $T_2$.
Note that $y\in C_{i_0,...,i_{j-1}}$ means that $y\in J_{i_0}, \, T_2(y) \in J_{i_1}, \, ..., \, T_2^{j-1}(y) \in J_{i_{j-1}}$.
We denote the elements of the partition associated with $T_1$ by $\left\{I_k^j\right\}_{k=0}^{\left|\mathbf{a}\right|^j-1}$.
As in the previous case, we will prove that condition \eqref{eq:D'1} holds.
\begin{proposition} \label{proposition: EI of maps only equal in part of the domain}
	Condition \eqref{eq:D'1} is satisfied and the Extremal Index is $$\theta=\frac{\left|\mathbf{a}\right|-1}{\left|\mathbf{a}\right|^2}+\frac{\left|\mathbf{a}\right|-1}{\left|\mathbf{a}\right|}.$$
\end{proposition}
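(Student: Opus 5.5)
The plan is to apply Theorem~\ref{Thrm: Main result using spectral approach}, which is already known to hold here. It then suffices to show that $p_k=0$ for all $k\ge1$, which follows from \eqref{eq:D'1}, and that $p_0=1/|\mathbf a|^2$. Indeed,
\[
1-\frac{1}{|\mathbf a|^2}=\frac{(|\mathbf a|-1)(|\mathbf a|+1)}{|\mathbf a|^2}=\frac{|\mathbf a|-1}{|\mathbf a|^2}+\frac{|\mathbf a|-1}{|\mathbf a|},
\]
which is the stated value of $\theta$. This also exhibits $\theta$ as the average $\frac1{|\mathbf a|}\cdot\frac{|\mathbf a|-1}{|\mathbf a|}+\bigl(1-\frac1{|\mathbf a|}\bigr)\cdot 1$. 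Throughout I assume $|\mathbf b|\ge2$, so that slopes of the form $\mathbf a^j\mathbf b^m$ with $m\ge1$ never have modulus $|\mathbf a|^j$.

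\textbf{Computation of $p_0$.} I would write $\mu(\Delta_n\cap T^{-1}\Delta_n)/\mu(\Delta_n)$ and split $\Delta_n$ into two parts:
\[
\Delta_n\cap\{x,y<1/|\mathbf a|\}\qquad\text{and}\qquad \Delta_n\cap\{x,y>1/|\mathbf a|\}.
\]
The remainder consists of points with $x,y$ on opposite sides of $1/|\mathbf a|$ and has measure $O(r_n^2)$.

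On the first part, $T$ acts as $(\mathbf a x-k,\mathbf a y-l)$. Up to sets of measure $O(r_n^2)$ near the discontinuities $k/\mathbf a$, we have $k=l$, so $|T_1x-T_2y|=|\mathbf a||x-y|$. Hence a point lies in $T^{-1}\Delta_n$ iff $|x-y|<r_n/|\mathbf a|$. This strip over a base of length $1/|\mathbf a|$ has measure $\frac{2r_n}{|\mathbf a|}\cdot\frac1{|\mathbf a|}+O(r_n^2)$. Since $\mu(\Delta_n)\sim 2r_n$, this part contributes $1/|\mathbf a|^2$ to the limit.

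On the second part, each branch of $T$ is affine with slopes $\mathbf a$ and $\mathbf a\mathbf b$, whose moduli differ. Lemma~\ref{lemma: upper bound for An T-jAn} (whose proof only uses the affine form with distinct slope moduli) then confines $\Delta_n\cap T^{-1}\Delta_n$ on each branch to a box of area $O(r_n^2)$ around the unique point of $D\cap\overline{T^{-1}D}$. There are finitely many branches, and the $\beta_n$ argument of Lemma~\ref{lemma: R'n to infty} discards branches whose closure misses $D$. So this part is $O(r_n^2)=o(\mu(\Delta_n))$, and therefore $p_0=1/|\mathbf a|^2$.

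\textbf{Condition \eqref{eq:D'1}.} Fix $j\ge2$ and partition $[0,1]^2$ into products $I^j_k\times C_{i_0,\dots,i_{j-1}}$. On each piece, $T^j$ is affine of the form $(\mathbf a^j x-k,\lambda y-l)$ with $\lambda=\mathbf a^j\mathbf b^m$, where $m$ is the number of indices $i_s$ corresponding to branches in $[1/|\mathbf a|,1]$.

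If $m\ge1$, then $|\lambda|\neq|\mathbf a|^j$. Lemma~\ref{lemma: upper bound for An T-jAn} together with the $\beta_n$ argument gives $\mu(\Delta_n\cap T^{-j}\Delta_n\cap\text{piece})=O(r_n^2)$. The number of pieces is finite for fixed $j$, so the total is $o(1/n)$.

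If $m=0$, then the $y$-cylinder is contained in the region where $T_2=T_1$, and $\lambda=\mathbf a^j$. For large $n$, the $\beta_n$ argument forces the $x$- and $y$-cylinders to carry the same branch indices, i.e.\ $k=l$. We then argue exactly as in Proposition~\ref{Proposition: D'1 for equal maps}: points of $\Delta_n^{(1)}$ satisfy $|x-y|>r_n/|\mathbf a|$ up to a set of measure $O(r_n^2)$, while points of $T^{-j}\Delta_n$ on such a piece satisfy $|x-y|<r_n/|\mathbf a|^j$. Since $j\ge2$, the intersection is empty.

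\textbf{Main obstacle.} I expect the delicate step to be the bookkeeping of the exceptional sets: points near the discontinuity $1/|\mathbf a|$ of $T_2$, pieces where the $x$- and $y$-branch indices differ, and the $O(r_n^2)$ set $B$ in the description of $\Delta_n^{(1)}$. For each of these I must check that the contribution is $O(r_n^2)$ with constants depending only on $j$. My first move would be to redefine $\beta_n$ for the cylinder partition above and rerun the compactness proof of Lemma~\ref{lemma: R'n to infty}. The second move would be to bound the number of points in $D\cap\overline{T^{-j}D\cap\text{piece}}$ by the total number of pieces, which is finite.
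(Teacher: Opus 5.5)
Your proposal is correct and follows essentially the same route as the paper. You obtain \eqref{eq:D'1} by separating the cylinder where $T_2^j$ agrees with $T_1^j$ (handled by the identical-maps argument of Proposition~\ref{Proposition: D'1 for equal maps}) from the cylinders where the $y$-slope is $\mathbf a^j\mathbf b^m$ with $m\ge1$ (handled by the $O(r_n^2)$ box bound of Lemma~\ref{lemma: upper bound for An T-jAn}), and you get $p_0$ from the same split of $\Delta_n$ into $\{x,y<1/|\mathbf a|\}$ and its complement. The only differences are cosmetic: you compute $\mu(\Delta_n\cap T^{-1}\Delta_n)$ to get $p_0=1/|\mathbf a|^2$ where the paper computes the complementary quantity $\mu(\Delta_n^{(1)})$, and your treatment of the mismatched pieces $I^j_k\times C_{0\cdots0}$, via a $\beta_n$ adapted to the cylinder partition, is if anything more careful than the paper's.
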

\begin{proof}
	Let $\beta_n$ be defined as in \eqref{eq:R'}.
	The map restricted to the open set $I_{0}^j \times C_{00...0}$ is equal to the one studied previously in this section.
	So, if $j<\beta_n$, it was shown in the proof of Proposition \ref{Proposition: D'1 for equal maps} that
	\begin{equation} \label{equation: empty in the zone where the maps are equal}
		\Delta_n^{(1)}\cap T^{-j}\Delta_n^{(1)} \cap I_{0}^j \times C_{00...0}= \emptyset.
	\end{equation}
	In the remaining regions, the maps are different.
	In fact, in those regions, the map $T_2$ assumes the form $\mathbf{a}^j \mathbf{b}^my \mod 1$, for some $m \in \{1,...,j\}$.
	To get an upper bound for the number of points of the diagonal which are mapped into the diagonal, we sum
	\begin{equation*}
		\sum_{m=1}^j \left|\mathbf{a}\right|^j\left(\left|\mathbf{b}\right|^m-1\right)=\frac{\left|\mathbf{b}\right|}{\left|\mathbf{b}\right|-1}\left|\mathbf{a}\right|^j\left(\left|\mathbf{b}\right|^{j}-1\right)-j\left|\mathbf{a}\right|^j,
	\end{equation*}
	where each term of the sum represents the number of points of the diagonal mapped into the diagonal, when we consider the map $(\mathbf{a}^jx \mod1, \mathbf{b}^m \mathbf{a}^j y \mod1)$, after removing the point $(0,0)$, since it is always in the region where both maps are equal.
	
	Using Lemma~\ref{lemma: R'n to infty}, we may assume that $n$ is sufficiently large so that $j\leq \beta_n$ and then, by  Lemma \ref{lemma: upper bound for An T-jAn}, we have
	\begin{equation*} \label{equation: upper bound for An T-jAn zone different}
		\mu\left(\Delta_n^{(1)}\cap T^{-j}\Delta_n\right) \leq \left[\frac{\left|\mathbf{b}\right|}{\left|\mathbf{b}\right|-1}\left|\mathbf{a}\right|^j\left(\left|\mathbf{b}\right|^{j}-1\right)-j\left|\mathbf{a}\right|^j\right] \cdot  \left(\frac{r_n^2\left(1+\left|\mathbf{a} \mathbf{b}\right|^j \right)\cdot\left(1+\left|\mathbf{a}\right|^j\right)}{\left|\left|\mathbf{b}\right|^j-\left|\mathbf{a}\right|^j \right|^2} \right).
	\end{equation*}
	
It follows that 
\begin{equation*}
n\mu\left(\Delta_n^{(1)}\cap T^{-j}\Delta_n\right) \leq nr_n^2 \frac{\left|\mathbf{a} \mathbf{b}\right|^{2j} \cdot \left|\mathbf{a}\right|^j}{\left|\left|\mathbf{b}\right|^j-\left|\mathbf{a}\right|^j \right|^2} \leq nr_n^2\left(\left|\mathbf{a}\right|^3 \left|\mathbf{b}\right|^2 \right)^j\xrightarrow[n\to\infty]{}0
\end{equation*}

As observed earlier, the fact that \eqref{eq:D'1} holds implies that $p_k=0$ for all $k\in\N$, which means that we are again left with computing $p_0$.	For that matter,  note that
	\begin{equation*}
		\mu\left(\Delta_n^{(1)}\right)=\mu\left(\Delta_n^{(1)}\cap I_{0} \times C_{0}\right)+\mu\left(\Delta_n^{(1)}\cap \left(I_{0} \times C_{0}\right)^c\right).
	\end{equation*}
	The first term is computed as in Proposition \ref{proposition: Extremal index equal maps}, except it comes with an extra factor of $\frac{1}{\left|\mathbf{a} \right|}$, which is precisely the measure of the set $\left\{x\in \left[0,1\right]: T_1(x)=T_2(x) \right\}$. So, the first term is $\frac{r_n\left(\left|\mathbf{a}\right|-1\right)}{\left|\mathbf{a}\right|^2}+\mathcal{O}\left(\frac{1}{n^2}\right)$. The second term is $\frac{r_n\left(\left|\mathbf{a}\right|-1\right)}{\left|\mathbf{a}\right|}-\mathcal{O}\left(\frac{1}{n^2}\right)$. This is because the measure of $\Delta_n \cap \left(I_0\times C_0\right)^c$ is $r_n\left( \frac{\left|\mathbf{a}\right|-1}{\left|\mathbf{a}\right|}\right)+\mathcal{O}\left(\frac{1}{n^2}\right)$ and, by Lemma \ref{lemma: upper bound for An T-jAn} and by the fact that only a finite number of points of $D\cap \left(I_0\times C_0\right)^c$ return to $D$ after the first iteration, which explains that there is only a $\mathcal{O}\left(\frac{1}{n^2}\right)$ contribution.
	This means that the extremal index is $\theta=\frac{\left|\mathbf{a}\right|-1}{\left|\mathbf{a}\right|^2}+\frac{\left|\mathbf{a}\right|-1}{\left|\mathbf{a}\right|}$.
\end{proof}

The proof of Proposition \ref{proposition: EI of maps only equal in part of the domain} provides clear insight into the mechanisms determining the value of the Extremal Index in this class of dynamics. The first contribution arises from the region where the factor maps coincide. This region represents a relative weight of $\frac{1}{|\mathbf{a}|}$ of the diagonal, and the corresponding fraction of points in $\Delta_n$ that do not return to $\Delta_n$ in one iteration converges to $\frac{|\mathbf{a}| - 1}{|\mathbf{a}|}$. Their combined contribution to the Extremal Index is given by the product of these two quantities.

The complementary region, where the factor maps are misaligned, has relative weight $\frac{|\mathbf{a}| - 1}{|\mathbf{a}|}$ of the diagonal. In this case, since the factor maps differ, the fraction of points in $\Delta_n$ that do not return to $\Delta_n$ converges to $1$.

Equivalently, the Extremal Index can be written as $1 - \frac{1}{|\mathbf{a}|^2}$, indicating that, in the limit, a proportion $\frac{1}{|\mathbf{a}|^2}$ of points in a neighbourhood of the diagonal return to that neighbourhood after one iteration.

\section{Applications}
\label{sec:examples}

In this section, we provide some examples to illustrate the potential of the results obtained earlier, namely Theorem~\ref{Thrm: Main result using spectral approach} and Corollaries~\ref{corollary: finite number of T-jD cap D} and \ref{corollary: lower_bound for diference of the derivtives}. We will introduce a class of perturbations of linear maps that fits this framework and show that when taking the direct product of perturbed versions of two dynamically incompatible maps, in the sense that the set of points that return to the diagonal is negligible, we obtain an Extremal Index equal to 1.

\subsection{Perturbed linear maps}
\label{subsec:perturbed-linear}

We start by recalling the definition of $\alpha$-H\"older function.
Let $f: I\subseteq \mathbb{R}^n\to \mathbb{R}^n$. We say that $f$ is $\alpha$-H\"older if there exists $C>0$ such that, for all $x, y\in I$
\begin{equation}
  \left| f\left(x\right)-f\left(y\right)\right| \leq C \left|x-y\right|^\alpha.
\end{equation}
It is easy to see that if $f$ is $\alpha$-H\"older and bounded away from zero (in norm), then there is $C>0$ such that given $z\in I$, $\epsilon>0$ and $x,y\in B_\epsilon\left(z\right)$,
\begin{equation} \label{equation: alpha holder plus}
  \left| f\left(x\right)-f\left(y\right)\right|\leq C \left| f\left(z\right)\right|\epsilon^\alpha.
\end{equation}
Let $f: I\subseteq \mathbb{R}^n \to \mathbb{R}^n$. We say that $f\in C^{1+\alpha}\left(I\right)$ or simply that $f$ is $C^{1+\alpha}$ if its determinant is $\alpha$-H\"older.

It is clear that the product of expanding linear maps satisfies all the conditions of Theorem \ref{Thrm: Main result using spectral approach}.
Using this approach, we can guarantee that Theorem \ref{Thrm: Main result using spectral approach} can also be applied to small perturbations of expanding linear maps.
To precisely describe what we mean by small perturbations, consider integers $\mathbf{a}, \mathbf{b}$, greater than $1$ in absolute value. Let $\delta<\frac{\min\{\left|\mathbf{a}\right|, \left|\mathbf{b}\right|\}-1}{2}$.
We label the differentiable regions (open sets) of the map
\begin{align*}
  T\colon [0,1]^2& \longrightarrow [0,1]^2\\
 (x,y)&\longmapsto \left(\mathbf{a} x \mod 1, \mathbf{b} y \mod 1\right)
\end{align*}
by $I_{kl}=I_k \times J_l$, where $k \in \{0,\ldots,|\mathbf{a}|-1\}$, and $l\in \{0,\ldots,|\mathbf{b}|-1\}$.
Consider the maps $f_k:I_k' \to \mathbb{R}$ and $g_l:J_l'\to \mathbb{R}$, where $I_k'$ and $J_l'$ are compact sets such that $\overline{I_k}\subseteq\operatorname{int}I_k'$ and $\overline{J_l}\subseteq\operatorname{int}J_l'$. Suppose that $f_k$ and $g_l$ are $C^{1+\alpha}$, for some $\alpha>0$.
Furthermore, assume that $f_k\left(\overline{I_k}\right)\subseteq\left[-1,1\right]$, $g_l\left(\overline{J_l}\right)\subseteq\left[-1,1\right]$, and that $f_k|_{\partial I_k}=0$ and $g_l|_{\partial J_l}=0$. Suppose also that
\begin{equation*}
  \forall x \in I_k', y\in J_l', \left|f_k'(x)\right|,\left|g_l'(y)\right| \leq 1.
\end{equation*}
Then, we consider the dynamical system
\begin{equation}
\label{eq:def-perturbed-linear-map}
  \tilde{T}:[0,1]^2\to [0,1]^2, \tilde{T}|_{I_{kl}}=T|_{I_{kl}}+\delta \left(f_k|_{I_k} \times g_l|_{J_l} \right),
\end{equation}
where 
\begin{align*}
  f_k \times g_l\colon I_k'\times J_l'& \mathbb{R}^2\\
  \left(x,y\right)&\longmapsto \left(f_k (x), g_l (y)\right).
\end{align*}
From now on, in order to simplify notation, we will write $f_k$ instead of $f_k|_{I_k}$ and analogously for $g_l$.
We denote $\tilde{T}_{kl}$ to the extension of $\tilde{T}|_{I_{kl}}$ to $\operatorname{int}\left(I'_k \times J_l'\right)$.
In order to apply Theorem \ref{Thrm: Main result using spectral approach}, we check that $\tilde{T}$ satisfies \ref{condition: PE1}-\ref{condition: PE5}, and that it satisfies Proposition \ref{proposition: density_bounded_from_zero}.

First, we check that each $\tilde{T}_{kl}$ is uniformly expanding and its restriction to $\overline{I_{kl}}$ is a diffeomorphism between $\overline{I_{kl}}$ and $\left[0,1\right]^2$. To do so, it is enough to check that each of the components are uniformly expanding and that they map, respectively, $\overline{I_k}$ and $\overline{J_l}$ to $\left[0,1\right]$. We do it for the first component. The derivative of it is $\mathbf{a}+\delta f_k'(x)$. If $\mathbf{a}>0$, we get $\mathbf{a}+\delta f_k'(x)>\mathbf{a}/2+1/2>1$, so the map is increasing and expanding. In particular, it is injective.
By the boundary conditions imposed on the map $f_k$, we get that its image is $\left[0,1\right]$. If $\mathbf{a}<0$, the proof is similar.

The map $\tilde{T}$ satisfies condition \ref{condition: PE1} and \ref{condition: PE3}, since it was actually constructed on the sets $U_i$ (that here we label by $I_{kl}$) which are the same that are used in the linear setting.
Condition \ref{condition: PE4} follows from the fact that both components are uniformly expanding.
To check condition \ref{condition: PE2}, we need the following propositions:

It remains to check $\tilde{T}$ satisfies Proposition \ref{proposition: density_bounded_from_zero}.
For that purpose, it suffices to show that for each component map of $\tilde{T}$, and for each interval $I\subseteq \left[0,1\right]$, there exists $n\in \mathbb{N}$ such that the image of $I$ under that component is $\left[0,1\right]$ (up to Lebesgue measure zero). By construction of $\tilde{T}$, there exists $\lambda\ge1$ such that the derivative of each component map has absolute value greater than $\lambda$ on each partition element.
An induction argument shows that the image of any interval is either $[0,1]$, an interval of length at least $\lambda$ times the original, or a union of two intervals near $0$ and $1$ whose total length grows. In conclusion, Proposition \ref{proposition: density_bounded_from_zero} applies. Hence, Theorem~\ref{Thrm: Main result using spectral approach} also applies.

Assuming that $\left|\mathbf{a}\right| \neq \left|\mathbf{b} \right|$ and that the maps $f_k$ and $g_l$ are $C^2$, one can easily compute the extremal index. Let $\delta<1/2$. Then, it is clear that for all $x\in \overline{I_k^j}$ and $y\in \overline{J_l^j}$,
\begin{equation*}
  \left(\left|\mathbf{a}\right|-\delta\right)^j\leq\left| \left(T_{i,1}^j\right)'\left(x\right)\right|\leq \left(\left|\mathbf{a}\right|+\delta\right)^j
\end{equation*}
\begin{equation*}
  \left(\left|\mathbf{b}\right|-\delta\right)^j\leq\left| \left(T_{i,2}^j\right)'\left(y\right)\right|\leq \left(\left|\mathbf{b}\right|+\delta\right)^j.
\end{equation*}
This, together with the fact that $\left|\mathbf{a}\right|\neq \left|\mathbf{b}\right|$, implies that the conditions of Corollary \ref{corollary: lower_bound for diference of the derivtives} are satisfied, meaning that the EVL has an extremal index equal to $1$. In other words, we have just proved the following:
\begin{theorem}\label{thm:perturbed-linear-maps}
Let $\tilde T\colon [0,1]^2\to[0,1]^2$ be a perturbed linear map as defined in \eqref{eq:def-perturbed-linear-map}, with $\left|\mathbf{a}\right| \neq \left|\mathbf{b} \right|$. Then, for all $s>0$ there exists a sequence $(u_n)_{n\in\N}$ such that \eqref{equation: un in thrm spec approach} holds and 
  \begin{equation*} 
    \lim_{n\to\infty}\mu\left(M_n\leq u_n\right)=e^{-s},
  \end{equation*}

\end{theorem}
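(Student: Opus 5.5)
The plan is to apply Theorem~\ref{Thrm: Main result using spectral approach} to $\tilde T$ and then show that every $p_k$ vanishes, so that $\theta=1$.

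\emph{Hypotheses of Theorem~\ref{Thrm: Main result using spectral approach}.} I would first confirm \ref{condition: PE1}--\ref{condition: PE5} for $\tilde T$.
\begin{enumerate}
\item Conditions \ref{condition: PE1}, \ref{condition: PE3} and \ref{condition: PE4} were checked above.
\item For \ref{condition: PE2}, the Jacobian of each branch factorises as $(\mathbf a+\delta f_k')(\mathbf b+\delta g_l')$. This is $\alpha$-H\"older and bounded away from zero, so \eqref{equation: alpha holder plus} applies to $\det D\tilde T_{kl}^{-1}$.
\item Since every branch maps its rectangle onto $[0,1]^2$, the remark following Lemma~\ref{lemma: Lasota_Yorke in Valpha} lets us take $\eta=\sigma^\alpha<1$. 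This gives \ref{condition: PE5}.
\end{enumerate}
Lemma~\ref{lemma: Lasota_Yorke in Valpha} then yields \eqref{cond:LY}. Proposition~\ref{proposition: density_bounded_from_zero} gives uniqueness of the acip and $h\ge c>0$, which together with quasi-compactness from \cite{saussol} gives \ref{B1}. Because $h=h_1h_2$ with $h_i$ bounded and bounded away from zero, $\mu(\Delta_n)\sim 2r_n\int_0^1h_1h_2$. Taking $r_n=\eta s/n$ as in Section~\ref{sec:EI} therefore gives $n\mu(\Delta_n)\to s$.

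\emph{Vanishing of the $p_k$.} For each $\ell\ge1$ I would apply Corollary~\ref{corollary: lower_bound for diference of the derivtives} to $\tilde T^\ell$, whose branches are still $C^2$ products of expanding maps.
\begin{enumerate}
\item The derivative bounds displayed above give $|(T^{-\ell}_{j,1})'|\in[(|\mathbf a|+\delta)^{-\ell},(|\mathbf a|-\delta)^{-\ell}]$, and the analogous interval for the second factor with $\mathbf b$.
\item Suppose without loss of generality $|\mathbf a|<|\mathbf b|$, so $|\mathbf a|+1\le|\mathbf b|$. With $\delta<1/2$ we get $|\mathbf a|+\delta<|\mathbf b|-\delta$. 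The two intervals are therefore disjoint and separated by a gap
\[
k_\ell=(|\mathbf a|+\delta)^{-\ell}-(|\mathbf b|-\delta)^{-\ell}>0 .
\]
\item This $k_\ell$ is uniform over all intersection points, so the corollary gives finitely many points in $D\cap\bigcup_j\tilde T_j^{-\ell}D$.
\item At every such point the derivatives of the two components differ. Corollary~\ref{corollary: finite number of T-jD cap D} then gives $p_{\ell-1}=0$ for all $\ell$.
\end{enumerate}
Hence each limit \eqref{equation: pk} exists and equals $0$, so $\theta=1$, and Theorem~\ref{Thrm: Main result using spectral approach} yields the limit $e^{-s}$.

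\emph{Main obstacle.} The delicate step is the corollaries' reliance on the constant $b_j$. This requires the second derivatives of the iterates $\tilde T^\ell$ to be bounded, which holds because $f_k,g_l$ are $C^2$ on compact sets and $\ell$ is fixed. It also requires handling intersection points lying on partition boundaries, which Proposition~\ref{proposition: isolated_points} covers via its boundary case. I would spell out these two points carefully.
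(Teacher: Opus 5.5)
Your proposal is correct and follows essentially the paper's own argument. You verify the hypotheses of Theorem~\ref{Thrm: Main result using spectral approach} through \ref{condition: PE1}--\ref{condition: PE5} and Proposition~\ref{proposition: density_bounded_from_zero}. You then use the bounds $(|\mathbf a|\pm\delta)^\ell$, $(|\mathbf b|\pm\delta)^\ell$ on the iterates' derivatives, with $|\mathbf a|\neq|\mathbf b|$ and $\delta<1/2$, to satisfy Corollary~\ref{corollary: lower_bound for diference of the derivtives} and conclude $p_k=0$, $\theta=1$. Like the paper, you rely on the extra assumptions $f_k,g_l\in C^2$ and $\delta<1/2$, which are not part of \eqref{eq:def-perturbed-linear-map}; your explicit uniform gap $k_\ell$ and the Jacobian factorisation for \ref{condition: PE2} fill in details the paper leaves implicit.
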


\appendix

\section{The co-dimension $C_q$}
\label{sec:codimension}

Throughout the paper, we mostly considered expanding maps with smooth
invariant densities bounded away from zero and infinity. In that case,
the co-dimension satisfies
\[
C_q=1,
\]
reflecting the fact that the measures behave locally like Lebesgue measure.

However, $C_q$ may differ from $1$ when the invariant densities exhibit
singularities or zeros, or when the measures have a more irregular (e.g.
fractal) structure. This occurs, for instance, in unimodal maps
\cite{rossler,topsynchro}.

We now give an explicit formula for $C_q$ in the case of absolutely continuous
measures whose densities have finitely many singularities or zeros.

\begin{proposition}
\label{codimension}
Let $\mu_1,\dots,\mu_q$ be probability measures on $\mathbb{R}$, absolutely
continuous with respect to Lebesgue measure, with densities $h_1,\dots,h_q$.
Assume that the common support
\[
I=\bigcap_{i=1}^q \mathrm{supp}(\mu_i)
\]
is a finite union of closed intervals.

Suppose that each $h_i$ has at most finitely many singularities or zeros,
located at points $s\in S\subset I$, and that near each $s$,
\[
h_i(x)\asymp |x-s|^{\alpha_i(s)},
\]
with $-1<\alpha_i(s)<0$ for singularities and $\alpha_i(s)>0$ for zeros.

Define
\[
\overline{\alpha}
=
\min_{s\in S}\left\{
\frac1q\sum_{i=1}^q \alpha_i(s)
\right\}.
\]
Then:
\begin{enumerate}
\item If $S=\emptyset$ or $\overline{\alpha}\ge0$, then
\[
C_q(\mu_1,\dots,\mu_q)=1.
\]

\item If $\overline{\alpha}<0$, then
\[
C_q(\mu_1,\dots,\mu_q)
=
\begin{cases}
1 & \text{if } q<-\dfrac{1}{\overline{\alpha}}, \\[6pt]
\dfrac{q(1+\overline{\alpha})}{q-1} & \text{otherwise}.
\end{cases}
\]
\end{enumerate}
\end{proposition}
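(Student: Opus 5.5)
Write $I(r):=\int \prod_{i=2}^q \mu_i(B_r(x))\,d\mu_1(x)$, so that $C_q=\lim_{r\to0}\log I(r)/((q-1)\log r)$. I would get matching upper and lower bounds on $I(r)$ of the form $I(r)\asymp \sum_{\text{contributions}} r^{\gamma}$ and then read off the smallest exponent $\gamma$, since that term dominates as $r\to0$.

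First split $I$ into a bulk region and small neighbourhoods of the points of $S$. Fix $\rho>0$ so that the intervals $U_s=(s-\rho,s+\rho)$, $s\in S$, are disjoint and on each $U_s$ the comparisons $h_i(x)\asymp|x-s|^{\alpha_i(s)}$ hold.

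\emph{Bulk.} On $I\setminus\bigcup_s U_s$ every $h_i$ is bounded above and below away from zero. Near the endpoints of $I$ I assume the densities are bounded, which is implicit in the hypothesis. Hence for $x$ there and $r$ small we have $\mu_i(B_r(x))\asymp r$, with at worst a halved ball at the endpoints of $I$. Points with $x$ outside $I$ contribute nothing: there $x\notin\mathrm{supp}(\mu_i)$ for some $i$, so $\mu_1$ gives no mass there or some factor vanishes for $r$ small, up to $O(r)$-neighbourhoods of the boundary. The boundary neighbourhoods only change constants. The bulk therefore contributes $\asymp r^{q-1}$. This gives the lower bound $I(r)\gtrsim r^{q-1}$, hence $C_q\le1$ always.

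\emph{Near $s$.} Write $x=s+t$ and $\alpha_i=\alpha_i(s)$. For $|t|\ge 2r$,
\[
\mu_i(B_r(x))\asymp r|t|^{\alpha_i}.
\]
For $|t|\le 2r$,
\[
\mu_i(B_r(x))\asymp \int_{|u|\lesssim r}|u|^{\alpha_i}\,du\asymp r^{1+\alpha_i},
\]
which needs $\alpha_i>-1$ for integrability. Both cases require care, with separate upper and lower bounds; I expect this to be the main technical point, though it is elementary. Set $A=\sum_{i=1}^q\alpha_i=q\bar\alpha_s$. The local contribution is then comparable to
\[
\int_{2r\le|t|\le\rho} r^{q-1}|t|^{A}\,dt \;+\; r\cdot r^{(q-1)}\,r^{\sum_{i\ge2}\alpha_i}\,r^{\alpha_1}.
\]
The second term comes from $\int_{|t|\le 2r}|t|^{\alpha_1}dt\asymp r^{1+\alpha_1}$ and equals $r^{q+A}$. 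The first term splits into three cases:
\begin{itemize}
\item if $A>-1$, it is $\asymp r^{q-1}$;
\item if $A=-1$, it is $\asymp r^{q-1}\log(1/r)$;
\item if $A<-1$, it is $\asymp r^{q-1}r^{A+1}=r^{q+A}$.
\end{itemize}

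Summing over the finitely many $s$ gives $I(r)\asymp r^{q-1}+\sum_s r^{\min(q-1,\,q+q\bar\alpha_s)}$, with a possible logarithmic factor that does not affect the limit. The exponent is thus $\min\{q-1,\ q(1+\bar\alpha)\}$, attained at the $s$ minimising the average exponent. Dividing $\log I(r)$ by $(q-1)\log r$:
\begin{itemize}
\item $C_q=1$ when $q(1+\bar\alpha)\ge q-1$, i.e. $q\bar\alpha\ge-1$. This covers $S=\emptyset$, the case $\bar\alpha\ge0$, and the case $q\le -1/\bar\alpha$.
\item Otherwise $C_q=q(1+\bar\alpha)/(q-1)$.
\end{itemize}
At the borderline $q=-1/\bar\alpha$ both formulas agree and give $1$, so the case split in the statement is consistent.
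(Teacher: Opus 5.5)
Your argument is correct. It uses the same basic strategy as the paper: localise near the points of $S$ and estimate each piece by scaling. Your decomposition is finer, though, and the refinement matters.

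The paper splits only into $B_r(S)$ and its complement. It asserts that for every $x\notin B_r(S)$ one has $\int_{x-r}^{x+r}h_i\asymp r$, and concludes $I_2(r)\asymp r^{q-1}$. That estimate is false uniformly. On the annulus $2r\le|x-s|\le\rho$ one has $\mu_i(B_r(x))\asymp r|x-s|^{\alpha_i(s)}$, exactly as you wrote. This region therefore contributes $r^{q-1}\int_{2r}^{\rho}t^{A}\,dt$, with $A=\sum_i\alpha_i(s)$:
\begin{itemize}
\item when $A<-1$, the contribution is $\asymp r^{q+A}$, the same order as the core $B_r(s)$;
\item when $A=-1$, it is $\asymp r^{q-1}\log(1/r)$, a factor missing from the paper's claimed $I(r)\asymp r^{q-1}+r^{q(1+\overline{\alpha})}$.
\end{itemize}
Neither discrepancy changes the limit defining $C_q$, so the paper's formula survives. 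It is your three-zone split (bulk, annulus, core) that actually justifies it.

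You also handle points $x\notin I$. The paper silently discards them by integrating only over $I$, even though the definition of $C_q$ integrates over the whole space. Your claim that the $O(r)$-neighbourhoods of $\partial I$ only change constants needs one condition. The upper bound $h_i(x)\lesssim|x-s|^{\alpha_i(s)}$ must hold on both sides of any $s\in S\cap\partial I$ at which some $\mathrm{supp}(\mu_i)$ extends beyond $I$. That is the natural reading of the hypothesis, but it is worth stating explicitly. For example, take $q=2$, $h_1\asymp|x|^{-0.9}$ on $[-1,0)$ and bounded on $[0,1]$, and $h_2\asymp x^{-0.9}$ on $[0,1]$. The region $(-r,0)$ then contributes $\asymp r^{0.2}$, which changes the exponent.
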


\begin{proof}
Let
\[
I(r)=\int_I \prod_{i=2}^q \mu_i(B_r(x))\,d\mu_1(x).
\]
We split the integral into contributions near singularities and away from them:
\[
I(r)=I_1(r)+I_2(r),
\]
where
\[
I_1(r)=\sum_{s\in S}\int_{B_r(s)} h_1(x)\prod_{i=2}^q
\int_{x-r}^{x+r} h_i(y)\,dy\,dx,
\]
and $I_2(r)$ is the integral over $I\setminus B_r(S)$.

\medskip
\noindent
For $x\notin B_r(S)$, the densities are regular, so
\[
\int_{x-r}^{x+r} h_i(y)\,dy \asymp r.
\]
Hence
\[
I_2(r)\asymp r^{q-1}.
\]

For $x$ close to $s$, we use the local behaviour
\[
\int_{x-r}^{x+r} h_i(y)\,dy
\asymp (|x-s|+r)^{1+\alpha_i(s)}.
\]
Thus
\[
I_1(r)
\asymp
\sum_{s\in S}
\int_{s-r}^{s+r}
|x-s|^{\alpha_1(s)} (|x-s|+r)^{\sum_{i=2}^q(1+\alpha_i(s))}\,dx.
\]
A direct scaling argument yields
\[
I_1(r)\asymp \sum_{s\in S} r^{q+\sum_{i=1}^q \alpha_i(s)}.
\]

Combining both contributions,
\[
I(r)\asymp r^{q-1} + r^{q(1+\overline{\alpha})}.
\]
The dominant term determines the scaling. If
\[
q(1+\overline{\alpha}) > q-1,
\]
i.e.\ $\overline{\alpha}>-1/q$, then $I(r)\asymp r^{q-1}$ and $C_q=1$. Otherwise,
$q(1+\overline{\alpha})\leq q-1$ and the singular contribution dominates:
\[
I(r)\asymp r^{q(1+\overline{\alpha})}.
\]
In either case, by definition of $C_q$,
\[
C_q = \lim_{r\to0}\frac{\log I(r)}{(q-1)\log r},
\]
which gives $C_q=1$ in the first case and $C_q=\dfrac{q(1+\overline{\alpha})}{q-1}$ in the second.
\end{proof}

\begin{remark}
The formula highlights a transition phenomenon: singularities only affect
$C_q$ when they are sufficiently strong (i.e.\ when $\overline{\alpha}<0$)
and when the number of trajectories $q$ is large enough. Otherwise, the
regular part of the measure dominates and $C_q=1$.
\end{remark}

\begin{example}
\label{ex:codim}
Let $I=[0,1]$.
\begin{enumerate}
\item Let $h_1(x)=h_2(x)=\dfrac{1}{\pi\sqrt{x(1-x)}}$ and $h_3(x)=1$. $h_1$ and $h_2$ are the invariant densities associated with the logistic map, while $h_3$ is that of an expanding linear map with integer slope. Each of $h_1,h_2$ has
exponents $\alpha_1(0)=\alpha_2(0)=\alpha_1(1)=\alpha_2(1)=-\tfrac{1}{2}$ and $\alpha_3\equiv0$,
giving $\overline{\alpha}=\tfrac{1}{3}(-\tfrac{1}{2}-\tfrac{1}{2}+0)=-\tfrac{1}{3}$.
The condition $q(1+\overline{\alpha})>q-1$ reads $3\cdot\tfrac{2}{3}>2$, which holds as an equality,
so the two exponents coincide and
\[
C_3(\mu_1,\mu_2,\mu_3)=1.
\]
\item Let $h_1(x)=h_2(x)=h_3(x)=h_4(x)=\dfrac{1}{\pi\sqrt{x(1-x)}}$ and $h_5(x)=1$. 
Here $\overline{\alpha}=\tfrac{1}{5}(4\cdot(-\tfrac{1}{2})+0)=-\tfrac{2}{5}$, and the condition
$q(1+\overline{\alpha})>q-1$ reads $5\cdot\tfrac{3}{5}>4$, i.e.\ $3>4$, which is false. Hence
\[
C_5(\mu_1,\ldots,\mu_5)=\frac{5(1-\tfrac{2}{5})}{4}=\frac{3}{4}.
\]
\end{enumerate}
\end{example}


\end{document}